\providecommand{\U}[1]{\protect\rule{.1in}{.1in}}
\newtheorem{theorem}{Theorem}[section]
\theoremstyle{plain}
\newtheorem{corollary}{Corollary}[section]
\newtheorem{lemma}{Lemma}[section]
\numberwithin{equation}{section}
\begin{document}
\title[CKN inequality and its stability]{Caffarelli-Kohn-Nirenberg identities, inequalities and their stabilities}
\author{Cristian Cazacu}
\address{Cristian Cazacu: $^{1}$Faculty of Mathematics and Computer Science \\
University of Bucha-rest\\
010014 Bucharest, Romania\\
\& $^{2}$Gheorghe Mihoc-Caius Iacob Institute of Mathematical\\
Statistics and Applied Mathematics of the Romanian Academy\\
050711 Bucharest, Romania }
\email{cristian.cazacu@fmi.unibuc.ro}
\author{Joshua Flynn}
\address{Joshua Flynn: Department of Mathematics\\
University of Connecticut\\
Storrs, CT 06269, USA}
\email{joshua.flynn@uconn.edu}
\author{Nguyen Lam}
\address{Nguyen Lam: School of Science and the Environment\\
Grenfell Campus, Memorial University of Newfoundland\\
Corner Brook, NL A2H5G4, Canada }
\email{nlam@grenfell.mun.ca}
\author{Guozhen Lu }
\address{Guozhen Lu: Department of Mathematics\\
University of Connecticut\\
Storrs, CT 06269, USA}
\email{guozhen.lu@uconn.edu}
\thanks{C. C. was partially supported by the Romanian Ministry of Research, Innovation
and Digitization, CNCS-UEFISCDI, project number PN-III-P1-1.1-TE-2021-1539,
within PNCDI III. N. L. was partially supported by an NSERC Discovery Grant.
G. L. were partially supported by a grant from the Simons Foundation.}
\date{\today}

\begin{abstract}
We set up a one-parameter family of inequalities that contains both the Hardy
inequalities (when the parameter is $1$) and the Caffarelli-Kohn-Nirenberg
inequalities (when the parameter is optimal). Moreover, we study these results
with the exact remainders to provide direct understandings to the sharp
constants, as well as the existence and non-existence of the optimizers of the
Hardy inequalities and Caffarelli-Kohn-Nirenberg inequalities. As an
application of our identities, we establish some sharp versions with optimal
constants and theirs attainability of the stability of the Heisenberg
Uncertainty Principle and several stability results of the
Caffarelli-Kohn-Nirenberg inequalities.

\end{abstract}
\maketitle

\section{Introduction}

Our starting point is the classical Hardy inequality that plays important
roles in many areas of analysis, mathematical physics and partial differential
equations: for $N\geq3,$ we have that
\begin{equation}
{\int\limits_{\mathbb{R}^{N}}}\left\vert \nabla u\right\vert ^{2}%
\mathrm{dx}\geq\left(  \frac{N-2}{2}\right)  ^{2}{\int\limits_{\mathbb{R}^{N}%
}}\frac{\left\vert u\right\vert ^{2}}{\left\vert x\right\vert ^{2}}%
\mathrm{dx},\text{ }u\in C_{0}^{\infty}\left(  \mathbb{R}^{N}\right)  ,
\label{Har}%
\end{equation}
with the sharp constant $\left(  \frac{N-2}{2}\right)  ^{2}$.

A very interesting fact about the Hardy inequalities that has attracted a lot
of attention is that though the constant $\left(  \frac{N-2}{2}\right)  ^{2}$
in (\ref{Har}) is optimal, the equality of (\ref{Har}) cannot occur for
nontrivial functions such that both sides of (\ref{Har}) are finite. For
instance, to explain for the aforementioned fact, many researchers have tried
to study the improvements of the Hardy type inequalities. In particular, in
the pioneering work \cite{BV97}, in order to study the stability of certain
singular solutions of nonlinear elliptic equations, Brezis and V\'{a}zquez
proved the following improved version of the Hardy inequality on bounded domains:

\medskip

\textbf{Theorem (Brezis-V\'{a}zquez \cite{BV97}).} \textit{For any bounded
domain }$\Omega\subset\mathbb{R}^{N}$\textit{, }$N\geq2$\textit{, and every
}$u\in H_{0}^{1}\left(  \Omega\right)  $\textit{, }%
\begin{equation}
{\int\limits_{\Omega}}\left\vert \nabla u\right\vert ^{2}\mathrm{dx}-\left(
\frac{N-2}{2}\right)  ^{2}{\int\limits_{\Omega}}\frac{\left\vert u\right\vert
^{2}}{\left\vert x\right\vert ^{2}}\mathrm{dx}\geq z_{0}^{2}\omega_{N}%
^{\frac{2}{N}}\left\vert \Omega\right\vert ^{-\frac{2}{N}}{\int\limits_{\Omega
}}\left\vert u\right\vert ^{2}\mathrm{dx} \label{1.3}%
\end{equation}
\textit{where }$z_{0}=2.4048...$\textit{ is the first zero of the Bessel
function }$J_{0}\left(  z\right)  $ \textit{and} $\omega_{N}$ \textit{is the
volume of the unit ball in} $\mathbb{R}^{N}$\textit{. The constant }$z_{0}%
^{2}\omega_{N}^{\frac{2}{N}}\left\vert \Omega\right\vert ^{-\frac{2}{N}}%
$\textit{ is optimal when }$\Omega$\textit{ is a ball but is not achieved in
the Sobolev space }$H_{0}^{1}\left(  \Omega\right)  $\textit{.}

\medskip

The fact that $z_{0}^{2}\omega_{N}^{\frac{2}{N}}\left\vert \Omega\right\vert
^{-\frac{2}{N}}$ is optimal when $\Omega$ is a ball, but still is not
achievable by nontrivial functions in (\ref{1.3}) led Brezis and V\'{a}zquez
to ask whether $z_{0}^{2}\omega_{N}^{\frac{2}{N}}\left\vert \Omega\right\vert
^{-\frac{2}{N}}$ is just the first term of an infinite series of remainder
terms. This question has drawn the attention and has been addressed by a lot
of researchers. The interested reader is referred to the monographs \cite{BEL,
GM1, KMP2007, KP, Maz11, OK}, for instance, that are standard references on
the subject. In particular, in \cite{FS08}, Frank and Seiringer provided a
general method in terms of nonlinear ground state representations to derive
the sharp local and nonlocal Hardy inequalities. We also note that the
improved Hardy type inequalities have also been investigated in the form of
identities in, for instance, \cite{DLL22, FLL21, LLZ19, LLZ20}.

In this paper, we will present another look at the Hardy type inequalities.
More precisely, we will set up a one-parameter family of inequalities in which
the Hardy inequalities correspond to the case when the parameter is $1$, while
if we optimize the parameter, we obtain the Caffarelli-Kohn-Nirenberg inequalities.

In other words, Hardy inequalities can be regarded as the non-optimal (scale
non-invariant) Caffarelli-Kohn-Nirenberg inequalities. It is worthy to note
that the Caffarelli-Kohn-Nirenberg inequalities have been established by
Caffarelli, Kohn and Nirenberg in their celebrated work \cite{CKN} to
generalize many well-known and important inequalities in analysis such as
Gagliardo-Nirenberg inequalities, Hardy-Sobolev inequalities, Nash's
inequalities, Sobolev inequalities, etc. Due to their important roles in many
areas of mathematics, the Caffarelli-Kohn-Nirenberg type inequalities and
their applications have seen a surge of research activity in recent years. We
do not attempt a survey of the extensive literature, but refer the reader to
\cite{CLZ21, DEFT15, DEL16, Dong18, Fly20, LL17, Lu97, Lu00, Wan22}, to name
just a few.

An important subfamily of the CKN inequality is the following $L^{2}%
$-Caffarelli-Kohn-Nirenberg inequality:%

\begin{equation}
{\int\limits_{\mathbb{R}^{N}}}\frac{|\nabla u|^{2}}{|x|^{2b}}\mathrm{dx}%
{\int\limits_{\mathbb{R}^{N}}}\frac{|u|^{2}}{|x|^{2a}}\mathrm{dx}\geq
C^{2}(N,a,b)\left(  {\int\limits_{\mathbb{R}^{N}}}\frac{|u|^{2}}{|x|^{a+b+1}%
}\mathrm{dx}\right)  ^{2}\text{, }u\in C_{0}^{\infty}(\mathbb{R}^{N}%
\setminus\{0\}). \label{CKN}%
\end{equation}
In particular, this subclass contains the Heisenberg Uncertainty Principle
($a=-1$, $b=0$), the Hydrogen Uncertainty Principle ($a=b=0$) and the Hardy
inequality ($a=1$, $b=0$), that play important roles in quantum mechanics. We
mention here that the optimal constant $C(N,a,b)>0$ of the $L^{2}%
$-Caffarelli-Kohn-Nirenberg inequality\ (\ref{CKN}) was first studied by Costa
in \cite{Cos08} for a particular range of parameters using
expanding-the-square method, and then by Catrina and Costa in \cite{CC09} for
the full range of parameters using spherical harmonics decomposition and a
Kelvin type transform. Very recently, the authors provided in \cite{CFL21} a
very simple and direct proof to derive the best constant $C(N,a,b)$ for the
whole range of parameters and to characterize all optimizers.

As mentioned earlier, we will prove in this article that the Hardy
inequalities and the Caffarelli-Kohn-Nirenberg inequalities belong to the same
family of inequalities in which the Caffarelli-Kohn-Nirenberg inequalities
appear to be the optimal ones. Hence, our results can be used to explain for
the attainability/unattainability of the sharp constants and the existence of
optimizers/virtual optimizers of the Hardy inequalities. Actually, we set up
these theorems with the exact remainders. Hence, our results can be applied to
identify and study the existence and non-existence of the optimizers of the
Hardy inequalities and the Caffarelli-Kohn-Nirenberg inequalities. We will
also show in this paper that our results can also be used to derive sharp
stability estimates of the Heisenberg Uncertainty Principle as well as some
stability versions of the Caffarelli-Kohn-Nirenberg inequalities.

\medskip

In 1985, Brezis and Lieb asked in \cite{BL85} whether the difference of the
two terms in the Sobolev inequalities controls the distance to the family of
extremal functions. This question has initiated the studies of quantitative
stability results for classical inequalities in mathematics that have been
investigated extensively and intensively in the literature. In \cite{BE91},
Bianchi and Egnell provided an affirmative answer to the question of Brezis
and Lieb for functions in $W^{1,2}\left(  \mathbb{R}^{N}\right)  $ by making
use of the fact that this function space is a Hilbert space: there is a
constant $c_{BE}>0$ such that
\[
{\int\limits_{\mathbb{R}^{N}}}|\nabla u|^{2}\mathrm{dx}-S_{N}\left(
{\int\limits_{\mathbb{R}^{N}}}|u|^{\frac{2N}{N-2}}\mathrm{dx}\right)
^{\frac{N-2}{N}}\geq c_{BE}\inf_{u^{\ast}}{\int\limits_{\mathbb{R}^{N}}%
}|\nabla u-\nabla u^{\ast}|^{2}\mathrm{dx}\text{.}%
\]
Here $S_{N}=\frac{1}{4}N\left(  N-2\right)  \left\vert \mathbb{S}%
^{N}\right\vert ^{\frac{2}{N}}$ is the optimal Sobolev constant and $u^{\ast
}\left(  x\right)  =\alpha\left(  \beta+\left\vert x-x_{0}\right\vert
^{2}\right)  ^{-\frac{N-2}{2}}$, $\alpha\in\mathbb{C}$, $\beta>0$, $x_{0}%
\in\mathbb{R}^{N}$, are the Aubin-Talenti functions. It is worth pointing out
that the stability constant $c_{BE}$ has not been investigated until very
recently. Indeed, in the paper \cite{BEFFL}, Dolbeault, Esteban, Figalli,
Frank and Loss established some lower and upper bounds for the stability
constant $c_{BE}$. In particular, in \cite{Kon22}, K\"{o}nig has proved that
$c_{BE}$ is strictly smaller the spectral gap constant $\frac{4}{N+4}$, which
is the best constant of the local stability of the Sobolev inequality
\cite{CFW13}.

The strategy in \cite{BE91} and its generalizations were also used by the
fourth author and Wei in \cite{LW00} to study the stability of the second
order Sobolev inequality, by Bartsch, Weth and Willem in \cite{BWW03} to
investigate the stability of the higher order Sobolev inequality, by Chen,
Frank and Weth in \cite{CFW13} to establish the stability of Sobolev
inequality for fractional orders, etc. The case on the Sobolev space
$W^{1,p}\left(  \mathbb{R}^{N}\right)  $, $p\neq2$, is much more complicated
and has just been established recently by, for instance, Cianchi, Fusco, Maggi
and Pratelli in \cite{CFMP09}, Figalli and Neumayer in \cite{FN19}, Neumayer
in \cite{Neu20} using new approaches.

In \cite{McV21}, McCurdy and Venkatraman studied the stability of the (scale
invariant) Heisenberg Uncertainty Principle. More precisely, they applied the
concentration-compactness arguments to show that there exist universal
constants $C_{1}>0$ and $C_{2}\left(  N\right)  >0$ such that for all
$u\in\left\{  u\in W^{1,2}\left(  \mathbb{R}^{N}\right)  :\left\Vert
xu\right\Vert _{2}<\infty\right\}  $:
\[
\delta_{2}\left(  u\right)  \geq C_{1}\left(  {\int\limits_{\mathbb{R}^{N}}%
}\left\vert u\right\vert ^{2}\mathrm{dx}\right)  d_{1}\left(  u,E\right)
^{2}+C_{2}\left(  N\right)  d_{1}\left(  u,E\right)  ^{4}\text{.}%
\]
Here
\[
\delta_{2}\left(  u\right)  :=\left(  {\int\limits_{\mathbb{R}^{N}}}\left\vert
\nabla u\right\vert ^{2}\mathrm{dx}\right)  \left(  {\int\limits_{\mathbb{R}%
^{N}}}\left\vert x\right\vert ^{2}\left\vert u\right\vert ^{2}\mathrm{dx}%
\right)  -\frac{N^{2}}{4}\left(  {\int\limits_{\mathbb{R}^{N}}}\left\vert
u\right\vert ^{2}\mathrm{dx}\right)  ^{2}%
\]
is the Heisenberg deficit, $E=\left\{  ce^{-\alpha\left\vert x\right\vert
^{2}}:c\in%
\mathbb{R}
\text{, }\alpha>0\right\}  $ is the set of the Gaussian functions, and
$d_{1}\left(  u,E\right)  :=\inf\left\{  \left(  {\int\limits_{\mathbb{R}^{N}%
}}\left\vert u-ce^{-\alpha\left\vert x\right\vert ^{2}}\right\vert
^{2}\mathrm{dx}\right)  ^{\frac{1}{2}};~c\in%
\mathbb{R}
\text{, }\alpha>0\right\}  $ is the $L^{2}$ distance to the set of the
optimizers. A short and constructive proof of this result has been given by
Fathi in \cite{Fat21} using the direct estimates via classical Gaussian
functional inequalities. More exactly, Fathi set up the following stability
version that provides explicit constants of the result in \cite{McV21}: for
all $u\in\left\{  u\in W^{1,2}\left(  \mathbb{R}^{N}\right)  :\left\Vert
xu\right\Vert _{2}<\infty\right\}  $, there holds%
\[
\delta_{2}\left(  u\right)  \geq\frac{1}{4}\left(  {\int\limits_{\mathbb{R}%
^{N}}}\left\vert u\right\vert ^{2}\mathrm{dx}\right)  d_{1}\left(  u,E\right)
^{2}+\frac{1}{16}d_{1}\left(  u,E\right)  ^{4}.
\]
However, this inequality is not optimal. In this paper, we will show that by
working on the scale non-invariant Heisenberg Uncertainty Principle and then
shifting to its optimal version using the one-parameter family of
inequalities, we are able to obtain a sharp version with optimal stability
constants of the above estimate (Theorem \ref{E3}) as a consequence of our
main results. Moreover, we also show that our optimal stability version can be
achieved by nontrivial functions.

It is also worth mentioning that the stability of the Gagliardo-Nirenberg
inequalities has also been studied in \cite{BDNS20, BDNS20a, CF13, DT16,
Ngu19, Seu16}, to name just a few. We also refer the readers to \cite{CF17,
CLT22, Dong181, DN21, FIL16, FJ15, FJ17, FMP13, FMP08, IM14, WW22}, and
references therein, for the stability results of many other functional and
geometric inequalities.

The second purpose of our paper is to study the stability of the
Caffarelli-Kohn-Nirenberg inequalities. We will use the aforementioned family
of inequalities and the following approach: We will first establish a weighted
version of the Poincar\'{e} inequality for the log-concave probability
measure. Then, by combining this new Poincar\'{e} inequality with the exact
remainders of the non-optimal (scale non-invariant) Caffarelli-Kohn-Nirenberg
inequalities, we obtain some versions of the stability of the non-optimal
(scale non-invariant) Caffarelli-Kohn-Nirenberg inequalities. Then, by
switching to the scale invariant ones, we deduce, among others, the following
version of the stability of the optimal Caffarelli-Kohn-Nirenberg inequalities:

\begin{theorem}
\label{E1}Let $0\leq b<\frac{N-2}{2}$, $a\leq\frac{Nb}{N-2}$ and
$a+b+1=\frac{2bN}{N-2}$. There exists a universal constant $C\left(
N,a,b\right)  >0$ such that
\begin{align*}
&  \left(  {\int\limits_{\mathbb{R}^{N}}}\frac{|u|^{2}}{|x|^{2a}}%
\mathrm{dx}\right)  ^{\frac{1}{2}}\left(  {\int\limits_{\mathbb{R}^{N}}}%
\frac{|\nabla u|^{2}}{|x|^{2b}}\mathrm{dx}\right)  ^{\frac{1}{2}}-\left\vert
\frac{N-a-b-1}{2}\right\vert \left(  {\int\limits_{\mathbb{R}^{N}}}%
\frac{|u|^{2}}{|x|^{a+b+1}}\mathrm{dx}\right) \\
&  \geq C\left(  N,a,b\right)  \inf_{\text{ }}\left\{  {\int
\limits_{\mathbb{R}^{N}}}\frac{\left\vert u-\alpha\exp(-\frac{\beta}%
{b+1-a}|x|^{b+1-a})\right\vert ^{2}}{\left\vert x\right\vert ^{a+b+1}%
}\mathrm{dx}\right\}  .
\end{align*}
Here the infimum is taken over the set of all $\alpha\in\mathbb{R}$
and$~\beta>0$ such that ${\int\limits_{\mathbb{R}^{N}}}\frac{|u|^{2}%
}{|x|^{a+b+1}}\mathrm{dx}={\int\limits_{\mathbb{R}^{N}}}\frac{|\alpha
\exp(-\frac{\beta}{b+1-a}|x|^{b+1-a})|^{2}}{|x|^{a+b+1}}\mathrm{dx}$.
\end{theorem}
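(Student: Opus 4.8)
We would derive Theorem \ref{E1} from a scale non-invariant stability estimate, obtained by pairing the exact-remainder identity for the non-optimal $L^{2}$-CKN inequality with a weighted Poincar\'{e} inequality for a one-dimensional log-concave measure. Set $p:=b+1-a$; the hypotheses force $0<p\le 2$ and $a+b+1<N$. The first ingredient is the one-parameter family of identities: with $\psi_{\beta}(x):=\exp\!\big(-\tfrac{\beta}{p}|x|^{p}\big)$ and $u=\psi_{\beta}v$, expanding $\int|x|^{-2b}|\psi_{\beta}\nabla v+v\nabla\psi_{\beta}|^{2}$ and integrating the cross term by parts (using $\nabla\psi_{\beta}=-\beta|x|^{p-2}x\,\psi_{\beta}$) gives, for every $\beta>0$,
\begin{equation*}
\int_{\mathbb{R}^{N}}\frac{|\nabla u|^{2}}{|x|^{2b}}\,\mathrm{dx}+\beta^{2}\int_{\mathbb{R}^{N}}\frac{|u|^{2}}{|x|^{2a}}\,\mathrm{dx}-\beta(N-a-b-1)\int_{\mathbb{R}^{N}}\frac{|u|^{2}}{|x|^{a+b+1}}\,\mathrm{dx}=\int_{\mathbb{R}^{N}}\frac{\psi_{\beta}^{2}\,|\nabla v|^{2}}{|x|^{2b}}\,\mathrm{dx},
\end{equation*}
with the remainder nonnegative and vanishing exactly when $v$ is constant, i.e. $u=\alpha\psi_{\beta}$. (This is the scale non-invariant $L^{2}$-CKN identity established in the earlier sections.)

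The second, and crucial, ingredient bounds this remainder below by the weighted $L^{2}$-distance in Theorem \ref{E1}. Passing to polar coordinates and substituting $\tau=|x|^{p}$, the normalized measure $d\nu_{\beta}\propto\psi_{\beta}^{2}|x|^{-(a+b+1)}\,\mathrm{dx}$ becomes the product of the uniform measure on $\mathbb{S}^{N-1}$ with the Gamma law $d\mu_{\tau}\propto\tau^{\kappa-1}e^{-\lambda\tau}\,d\tau$, where $\kappa=\tfrac{N-1-a-b}{p}$ and $\lambda=\tfrac{2\beta}{p}$; the assumption $b<\tfrac{N-2}{2}$ is precisely $\kappa>1$, so $\mu_{\tau}$ is log-concave. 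In the same variables, $\int\psi_{\beta}^{2}|x|^{-2b}|\nabla v|^{2}\,\mathrm{dx}$ equals a positive constant times $\int\tau(\partial_{\tau}v)^{2}\,d\mu_{\tau}d\sigma+\int\tau^{-1}|\nabla_{\mathbb{S}}v|^{2}\,d\mu_{\tau}d\sigma$, while $\int|u-\alpha\psi_{\beta}|^{2}|x|^{-(a+b+1)}\,\mathrm{dx}$ equals the same constant times $\int|v-\alpha|^{2}\,d\mu_{\tau}d\sigma$. Decomposing $v$ into spherical harmonics: on the radial mode the weighted Poincar\'{e} inequality for the log-concave measure $\mu_{\tau}$ (the Laguerre spectral gap, equal to $\lambda$) controls $\mathrm{Var}_{\mu_{\tau}}$; on each higher mode $k\ge1$ the centrifugal term $k(k+N-2)\tau^{-1}$, together with $\langle\tau^{-1}\rangle_{\mu_{\tau}}=\lambda/(\kappa-1)$ and a one-dimensional Hardy-type bound to absorb cross terms, again yields a lower bound of order $\lambda$. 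Collecting the modes,
\begin{equation*}
\int_{\mathbb{R}^{N}}\frac{\psi_{\beta}^{2}\,|\nabla v|^{2}}{|x|^{2b}}\,\mathrm{dx}\ \ge\ c(N,a,b)\,\beta\,\inf_{\alpha\in\mathbb{R}}\int_{\mathbb{R}^{N}}\frac{|u-\alpha\psi_{\beta}|^{2}}{|x|^{a+b+1}}\,\mathrm{dx},\qquad c(N,a,b)>0.
\end{equation*}

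Combining the identity with this inequality and dividing by $\beta$ shows that, for every $\beta>0$, the quantity $\tfrac{1}{\beta}\int|\nabla u|^{2}|x|^{-2b}+\beta\int|u|^{2}|x|^{-2a}-(N-a-b-1)\int|u|^{2}|x|^{-(a+b+1)}$ is at least $c(N,a,b)\inf_{\alpha}\int|u-\alpha\psi_{\beta}|^{2}|x|^{-(a+b+1)}$. Taking $\beta=\big(\int|\nabla u|^{2}|x|^{-2b}\big)^{1/2}\big(\int|u|^{2}|x|^{-2a}\big)^{-1/2}$ collapses the first two terms on the left to $2\big(\int|\nabla u|^{2}|x|^{-2b}\big)^{1/2}\big(\int|u|^{2}|x|^{-2a}\big)^{1/2}$, and bounding the right side below by the infimum over all $\alpha\in\mathbb{R},\ \beta>0$ yields Theorem \ref{E1} with the \emph{unconstrained} infimum. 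To pass to the infimum restricted by the $L^{2}$-normalization in the statement, one splits into two cases: if the deficit is small then, by the estimate just obtained, $u$ is close in the $|x|^{-(a+b+1)}$-weighted $L^{2}$ norm to some $\alpha^{*}\psi_{\beta^{*}}$, so the normalized distance differs from the unconstrained one by a factor close to $1$; if the deficit is bounded below, the normalized distance is crudely controlled by $\int|u|^{2}|x|^{-(a+b+1)}$ and one concludes by homogeneity of both sides.

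The main obstacle is the weighted Poincar\'{e} step, and within it the non-radial modes. In the Heisenberg case $b=0$ one has $p=2$, no weights at all, and $\nu_{\beta}$ is literally Gaussian, so the classical Gaussian Poincar\'{e} inequality suffices; but for $b>0$ the Dirichlet form carries the genuinely $\tau$-dependent coefficients $\tau$ and $\tau^{-1}$, the full measure $\nu_{\beta}$ on $\mathbb{R}^{N}$ is \emph{not} log-concave, and one must exploit the log-concavity of only the one-dimensional radial (Gamma) marginal while handling the angular directions through the centrifugal potential, all the while keeping the resulting constant strictly positive and independent of $u$ (and, after the rescaling in $\beta$, independent of $\beta$). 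The upgrade from the unconstrained to the normalized infimum is a secondary technical point.
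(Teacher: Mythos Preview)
Your overall architecture matches the paper's exactly: the scale non-invariant CKN identity with remainder $\int|x|^{-2b}\psi_\beta^{2}|\nabla v|^{2}\,\mathrm{dx}$, a weighted Poincar\'e inequality bounding this remainder below by $c(N,a,b)\,\beta\inf_{\alpha}\int|x|^{-(a+b+1)}|u-\alpha\psi_\beta|^{2}\,\mathrm{dx}$, optimization in $\beta$ to pass to the scale-invariant deficit with the unconstrained infimum, and finally the small-deficit/large-deficit dichotomy to upgrade to the normalized infimum (this last step is precisely Theorem~\ref{T3.7}). The only genuine difference lies in how the weighted Poincar\'e step is proved. You substitute $\tau=|x|^{p}$ to reduce to the one-dimensional Gamma law and then decompose in spherical harmonics, handling the radial mode by the Laguerre spectral gap and the higher modes by combining that gap with the centrifugal potential $k(k+N-2)\tau^{-1}$; this is correct (for instance, on each mode $k\ge1$ one may write $\|f\|_{\mu_\tau}^{2}\le\mathrm{Var}_{\mu_\tau}(f)+\langle\tau\rangle_{\mu_\tau}\,\langle\tau^{-1}f^{2}\rangle_{\mu_\tau}$ and absorb both pieces into the form), though your treatment of the non-radial case is only sketched and the two terms in the transformed Dirichlet form actually carry different $p$-dependent prefactors. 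The paper instead proves the needed weighted Poincar\'e inequality (Lemma~\ref{key2}) via the $N$-dimensional power change of variable $y=|x|^{\lambda-1}x$ with $\lambda=\tfrac{N-2}{N-2-2b}$: a pointwise gradient bound gives $\int|x|^{-2b}|\nabla v|^{2}e^{-\delta|x|^{b+1-a}}\,\mathrm{dx}\ge\int|\nabla\bar v|^{2}e^{-\delta|x|^{\lambda(b+1-a)}}\,\mathrm{dx}$, and under the hypothesis $a+b+1=\tfrac{2bN}{N-2}$ one computes $\lambda(b+1-a)=2$, so the transformed measure is Gaussian and the standard log-concave Poincar\'e on $\mathbb{R}^{N}$ applies in one stroke, with no spherical decomposition at all. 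Your route yields more explicit constants on the radial mode; the paper's is shorter and entirely avoids the mode-by-mode analysis that you correctly identify as the main obstacle.
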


We note that
\[
\delta_{1,a,b}\left(  u\right)  :=\left(  {\int\limits_{\mathbb{R}^{N}}}%
\frac{|u|^{2}}{|x|^{2a}}\mathrm{dx}\right)  ^{\frac{1}{2}}\left(
{\int\limits_{\mathbb{R}^{N}}}\frac{|\nabla u|^{2}}{|x|^{2b}}\mathrm{dx}%
\right)  ^{\frac{1}{2}}-\left\vert \frac{N-a-b-1}{2}\right\vert \left(
{\int\limits_{\mathbb{R}^{N}}}\frac{|u|^{2}}{|x|^{a+b+1}}\mathrm{dx}\right)
.
\]
is the Caffarelli-Kohn-Nirenberg deficit. Also, we will show in Section 3 that
$d_{1,a,b}\left(  u,E_{a,b}\right)  :=\inf_{v\in E_{a,b}}\left(
{\int\limits_{\mathbb{R}^{N}}}\frac{\left\vert u-v\right\vert ^{2}}{\left\vert
x\right\vert ^{a+b+1}}\mathrm{dx}\right)  ^{\frac{1}{2}}$ is the distance from
$u$ to $E_{a,b}$. Here
\[
E_{a,b}=\left\{  v(x)=\alpha\exp(-\frac{\beta}{b+1-a}|x|^{b+1-a})\text{;
}\alpha\in\mathbb{R},~\beta>0\right\}
\]
is the set of optimizers for the scale invariant Caffarelli-Kohn-Nirenberg
inequalities (\ref{CKN2}). Therefore, Theorem \ref{E1} implies that%
\[
\delta_{1,a,b}\left(  u\right)  \geq C\left(  N,a,b\right)  d_{1,a,b}\left(
u,E_{a,b}\right)  ^{2}.
\]

In the special case $a=-1,$ $b=0$ (that is, the Heisenberg Uncertainty
Principle), we actually obtain the explicit constants that do not depend on
the dimension:

\begin{theorem}
\label{E2}For all $u\in\left\{  u\in W^{1,2}\left(  \mathbb{R}^{N}\right)
:\left\Vert xu\right\Vert _{2}<\infty\right\}  $, we have%
\[
\delta_{1}\left(  u\right)  \geq\inf_{c\in\mathbb{R}\text{, }\alpha>0}\left\{
\left\Vert u-ce^{-\alpha\left\vert x\right\vert ^{2}}\right\Vert _{2}%
^{2}\right\}
\]
and \ \ \ \
\[
\delta_{1}\left(  u\right)  \geq\frac{1}{2}\inf_{c\in\mathbb{R}\text{, }%
\alpha>0}\left\{  \left\Vert u-ce^{-\alpha\left\vert x\right\vert ^{2}%
}\right\Vert _{2}^{2}:\left\Vert u\right\Vert _{2}=\left\Vert ce^{-\alpha
\left\vert x\right\vert ^{2}}\right\Vert _{2}^{2}\right\}  .
\]
These inequalities are sharp and the equalities can be attained by nontrivial functions.
\end{theorem}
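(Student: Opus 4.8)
My plan is to reduce Theorem~\ref{E2} to the exact-remainder form of the scale non-invariant companion of the Heisenberg Uncertainty Principle (the $t=1$ member of our one-parameter family) together with the sharp Gaussian Poincar\'e inequality, and then to pass to the scale-invariant statement by optimizing over dilations, keeping track of all constants (Theorem~\ref{E1} would give the result with a non-explicit constant; here one must redo the argument). The starting identity, valid for $u$ in the stated class, is
\[
\int_{\mathbb{R}^N}|\nabla u|^2\,\mathrm{dx}+\int_{\mathbb{R}^N}|x|^2|u|^2\,\mathrm{dx}-N\int_{\mathbb{R}^N}|u|^2\,\mathrm{dx}=\int_{\mathbb{R}^N}|\nabla u+xu|^2\,\mathrm{dx},
\]
obtained by expanding the square and integrating by parts ($2\int u\,x\cdot\nabla u=-N\int|u|^2$). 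Substituting $u=e^{-|x|^2/2}g$ one checks $\nabla u+xu=e^{-|x|^2/2}\nabla g$, so the remainder equals $\int_{\mathbb{R}^N}e^{-|x|^2}|\nabla g|^2\,\mathrm{dx}$, a Dirichlet form for the log-concave probability measure proportional to $e^{-|x|^2}\,\mathrm{dx}$.

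Next I would invoke that the sharp Poincar\'e (spectral gap) constant of that measure is exactly $2$ — the first nonzero eigenvalue of $-\Delta+2x\cdot\nabla$, attained precisely on the linear functions — giving
\[
\int_{\mathbb{R}^N}e^{-|x|^2}|\nabla g|^2\,\mathrm{dx}\ge 2\int_{\mathbb{R}^N}e^{-|x|^2}|g-\bar g|^2\,\mathrm{dx},
\]
with $\bar g$ the weighted mean of $g$. Since $e^{-|x|^2/2}(g-\bar g)=u-\bar g\,e^{-|x|^2/2}$ and $\bar g\,e^{-|x|^2/2}\in E$, combining the two displays yields the scale non-invariant stability
\[
\int_{\mathbb{R}^N}|\nabla u|^2\,\mathrm{dx}+\int_{\mathbb{R}^N}|x|^2|u|^2\,\mathrm{dx}-N\int_{\mathbb{R}^N}|u|^2\,\mathrm{dx}\ge 2\,d_{1}(u,E)^2,
\]
where $d_1(u,E)^2=\inf_{c\in\mathbb{R},\alpha>0}\|u-ce^{-\alpha|x|^2}\|_2^2$.

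To reach the scale-invariant statement I would apply the last inequality to the dilates $u_\lambda=u(\lambda\,\cdot)$ and use $\int|\nabla u_\lambda|^2=\lambda^{2-N}\int|\nabla u|^2$, $\int|x|^2|u_\lambda|^2=\lambda^{-N-2}\int|x|^2|u|^2$, $\int|u_\lambda|^2=\lambda^{-N}\int|u|^2$, and $d_1(u_\lambda,E)^2=\lambda^{-N}d_1(u,E)^2$: the factor $\lambda^{-N}$ cancels, leaving $\lambda^2\int|\nabla u|^2+\lambda^{-2}\int|x|^2|u|^2-N\int|u|^2\ge 2\,d_1(u,E)^2$ for every $\lambda>0$, and minimizing the left side over $\lambda$ gives exactly $2\delta_1(u)$, hence $\delta_1(u)\ge d_1(u,E)^2$, the first inequality. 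For the second I would use the elementary Hilbert-space fact: for $0\neq w\in L^2$, a unit vector $e$, and $\mu=\langle w,e\rangle$, the multiple $\beta e$ with $|\beta|=\|w\|_2$ and $\operatorname{sgn}\beta=\operatorname{sgn}\mu$ satisfies $\|w-\beta e\|_2^2=\tfrac{2\|w\|_2}{\|w\|_2+|\mu|}\bigl(\|w\|_2^2-\mu^2\bigr)\le 2\bigl(\|w\|_2^2-\mu^2\bigr)$. Taking $e$ proportional to $e^{-\alpha|x|^2}$ with $\alpha$ nearly optimal in $d_1(u,E)$ (so $\|w\|_2^2-\mu^2$ is nearly $d_1(u,E)^2$, and $\beta e$ automatically satisfies the norm constraint) gives $\inf\{\|u-ce^{-\alpha|x|^2}\|_2^2:\|u\|_2=\|ce^{-\alpha|x|^2}\|_2\}\le 2\,d_1(u,E)^2$, which together with $\delta_1(u)\ge d_1(u,E)^2$ yields the second inequality.

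For sharpness I would take $u(x)=(b\cdot x)\,e^{-\gamma|x|^2}$ with $b\neq0$ and $\gamma>0$: here $g$ is linear, so the Poincar\'e step is an equality; $u$ is odd and hence orthogonal in $L^2$ to every Gaussian $e^{-\alpha|x|^2}$, so $d_1(u,E)^2=\|u\|_2^2$ and the constrained infimum equals $2\|u\|_2^2$; and a direct computation gives $\delta_1(u)=\|u\|_2^2$, so both inequalities hold with equality. The one genuine difficulty is preserving optimality of the constants throughout: one must recognize the exact remainder, after the Gaussian change of variables, as precisely the Dirichlet form whose best Poincar\'e constant is $2$, and then carry out the dilation step with no loss — which is possible exactly because $d_1(u,E)$ carries the same dilation weight $\lambda^{-N}$ as the deficit, so optimizing over $\lambda$ costs nothing on the right-hand side; the remaining steps are bookkeeping and the Hilbert-space lemma.
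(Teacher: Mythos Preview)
Your argument is correct. For the first inequality and for sharpness you follow essentially the paper's route: the exact remainder identity, the sharp Gaussian Poincar\'e inequality with constant $2$, and optimization over the dilation parameter (the paper phrases this last step as choosing a specific $\lambda$ in its Corollary~\ref{c6}, but it amounts to the same thing), with the extremizer $u=x_1e^{-|x|^2/2}$ in both treatments.

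Where you genuinely differ is in the second inequality. The paper (Theorem~\ref{T3.3}) normalizes $\|u\|_2=1$ and splits into cases according to whether $\delta_1(u)<1$ or $\delta_1(u)\ge1$; in the first case it locates a near-minimizer $v\in E$, rescales it to $w=\lambda v$ with $\|w\|_2=1$, and then proves the algebraic inequality $(2-\lambda)\int uv\le 1/\lambda^2$ by a further subcase analysis on $\lambda\gtrless2$; the second case is handled by the parallelogram law. Your Hilbert-space lemma bypasses all of this: for any unit vector $e$ and $\mu=\langle u,e\rangle$, the point $\beta e$ with $|\beta|=\|u\|_2$, $\operatorname{sgn}\beta=\operatorname{sgn}\mu$ satisfies $\|u-\beta e\|_2^2=\frac{2\|u\|_2}{\|u\|_2+|\mu|}(\|u\|_2^2-\mu^2)\le 2(\|u\|_2^2-\mu^2)$, which immediately yields $d_2(u,E)^2\le 2\,d_1(u,E)^2$ and hence the second inequality from the first. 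This is shorter and more transparent than the paper's argument, and it makes clear that the factor $\tfrac12$ is exactly the cost of passing from the unconstrained to the norm-constrained distance; the paper's approach, on the other hand, shows more directly how the small-deficit regime controls the geometry near the extremal set.
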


Here we use the following Heisenberg deficit
\[
\delta_{1}\left(  u\right)  :=\left(  {\int\limits_{\mathbb{R}^{N}}}\left\vert
\nabla u\right\vert ^{2}\mathrm{dx}\right)  ^{\frac{1}{2}}\left(
{\int\limits_{\mathbb{R}^{N}}}\left\vert x\right\vert ^{2}\left\vert
u\right\vert ^{2}\mathrm{dx}\right)  ^{\frac{1}{2}}-\frac{N}{2}{\int
\limits_{\mathbb{R}^{N}}}\left\vert u\right\vert ^{2}\mathrm{dx}\text{.}%
\]

We note that the stability results in \cite{Fat21, McV21} use the Heisenberg
deficit
\[
\delta_{2}\left(  u\right)  =\left(  {\int\limits_{\mathbb{R}^{N}}}\left\vert
\nabla u\right\vert ^{2}\mathrm{dx}\right)  \left(  {\int\limits_{\mathbb{R}%
^{N}}}\left\vert x\right\vert ^{2}\left\vert u\right\vert ^{2}\mathrm{dx}%
\right)  -\frac{N^{2}}{4}\left(  {\int\limits_{\mathbb{R}^{N}}}\left\vert
u\right\vert ^{2}\mathrm{dx}\right)  ^{2}.
\]
Actually, our stability version with the Heisenberg deficit $\delta_{1}\left(
u\right)  $ implies the stability results with the Heisenberg deficit
$\delta_{2}\left(  u\right)  $ in \cite{Fat21, McV21}. Moreover, we are able
to establish the optimal stability constants and their attainabilities.
Indeed, from
\[
\delta_{1}\left(  u\right)  \geq d_{1}\left(  u,E\right)  ^{2},
\]
we deduce%
\[
\left(  {\int\limits_{\mathbb{R}^{N}}}\left\vert \nabla u\right\vert
^{2}\mathrm{dx}\right)  ^{\frac{1}{2}}\left(  {\int\limits_{\mathbb{R}^{N}}%
}\left\vert x\right\vert ^{2}\left\vert u\right\vert ^{2}\mathrm{dx}\right)
^{\frac{1}{2}}\geq\frac{N}{2}{\int\limits_{\mathbb{R}^{N}}}\left\vert
u\right\vert ^{2}\mathrm{dx}+d_{1}\left(  u,E\right)  ^{2}.
\]
Therefore%
\[
\left(  {\int\limits_{\mathbb{R}^{N}}}\left\vert \nabla u\right\vert
^{2}\mathrm{dx}\right)  \left(  {\int\limits_{\mathbb{R}^{N}}}\left\vert
x\right\vert ^{2}\left\vert u\right\vert ^{2}\mathrm{dx}\right)  \geq
\frac{N^{2}}{4}\left(  {\int\limits_{\mathbb{R}^{N}}}\left\vert u\right\vert
^{2}\mathrm{dx}\right)  ^{2}+N\left(  {\int\limits_{\mathbb{R}^{N}}}\left\vert
u\right\vert ^{2}\mathrm{dx}\right)  d_{1}\left(  u,E\right)  ^{2}%
+d_{1}\left(  u,E\right)  ^{4}.
\]
That is
\[
\delta_{2}\left(  u\right)  \geq N\left(  {\int\limits_{\mathbb{R}^{N}}%
}\left\vert u\right\vert ^{2}\mathrm{dx}\right)  d_{1}\left(  u,E\right)
^{2}+d_{1}\left(  u,E\right)  ^{4}.
\]
Similarly, let $d_{2}\left(  u,E\right)  :=\inf_{c\in\mathbb{R}\text{, }%
\alpha>0}\left\{  \left\Vert u-ce^{-\alpha\left\vert x\right\vert ^{2}%
}\right\Vert _{2}:\left\Vert u\right\Vert _{2}=\left\Vert ce^{-\alpha
\left\vert x\right\vert ^{2}}\right\Vert _{2}^{2}\right\}  \geq d_{1}\left(
u,E\right)  $. Then since
\[
\delta_{1}\left(  u\right)  \geq\frac{1}{2}d_{2}\left(  u,E\right)  ^{2},
\]
we deduce%
\[
\left(  {\int\limits_{\mathbb{R}^{N}}}\left\vert \nabla u\right\vert
^{2}\mathrm{dx}\right)  ^{\frac{1}{2}}\left(  {\int\limits_{\mathbb{R}^{N}}%
}\left\vert x\right\vert ^{2}\left\vert u\right\vert ^{2}\mathrm{dx}\right)
^{\frac{1}{2}}\geq\frac{N}{2}{\int\limits_{\mathbb{R}^{N}}}\left\vert
u\right\vert ^{2}\mathrm{dx}+\frac{1}{2}d_{2}\left(  u,E\right)  ^{2}.
\]
Therefore%
\[
\left(  {\int\limits_{\mathbb{R}^{N}}}\left\vert \nabla u\right\vert
^{2}\mathrm{dx}\right)  \left(  {\int\limits_{\mathbb{R}^{N}}}\left\vert
x\right\vert ^{2}\left\vert u\right\vert ^{2}\mathrm{dx}\right)  \geq
\frac{N^{2}}{4}\left(  {\int\limits_{\mathbb{R}^{N}}}\left\vert u\right\vert
^{2}\mathrm{dx}\right)  ^{2}+\frac{N}{2}\left(  {\int\limits_{\mathbb{R}^{N}}%
}\left\vert u\right\vert ^{2}\mathrm{dx}\right)  d_{2}\left(  u,E\right)
^{2}+\frac{1}{4}d_{2}\left(  u,E\right)  ^{4}.
\]
That is
\[
\delta_{2}\left(  u\right)  \geq\frac{N}{2}\left(  {\int\limits_{\mathbb{R}%
^{N}}}\left\vert u\right\vert ^{2}\mathrm{dx}\right)  d_{2}\left(  u,E\right)
^{2}+\frac{1}{4}d_{2}\left(  u,E\right)  ^{4}.
\]
Therefore, we have the following sharp results:

\begin{theorem}
\label{E3}For all $u\in\left\{  u\in W^{1,2}\left(  \mathbb{R}^{N}\right)
:\left\Vert xu\right\Vert _{2}<\infty\right\}  $, we have%
\[
\delta_{2}\left(  u\right)  \geq N\left(  {\int\limits_{\mathbb{R}^{N}}%
}\left\vert u\right\vert ^{2}\mathrm{dx}\right)  d_{1}\left(  u,E\right)
^{2}+d_{1}\left(  u,E\right)  ^{4}%
\]
and
\[
\delta_{2}\left(  u\right)  \geq\frac{N}{2}\left(  {\int\limits_{\mathbb{R}%
^{N}}}\left\vert u\right\vert ^{2}\mathrm{dx}\right)  d_{2}\left(  u,E\right)
^{2}+\frac{1}{4}d_{2}\left(  u,E\right)  ^{4}.
\]
These inequalities are sharp and the equalities can be attained by nontrivial functions.
\end{theorem}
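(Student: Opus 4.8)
The plan is to obtain Theorem \ref{E3} as a direct corollary of Theorem \ref{E2}, the only genuinely new point being the verification that the stated constants are optimal and attained.

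\textbf{Step 1: the two inequalities.} These are already contained in the computation displayed just before the statement. Writing $A=\int_{\mathbb{R}^{N}}|\nabla u|^{2}\,\mathrm{dx}$, $B=\int_{\mathbb{R}^{N}}|x|^{2}|u|^{2}\,\mathrm{dx}$, $C=\int_{\mathbb{R}^{N}}|u|^{2}\,\mathrm{dx}$, Theorem \ref{E2} gives $A^{1/2}B^{1/2}\ge \frac{N}{2}C+d_{1}(u,E)^{2}$ and $A^{1/2}B^{1/2}\ge \frac{N}{2}C+\frac{1}{2}d_{2}(u,E)^{2}$. Both right-hand sides are nonnegative, so squaring, using $(\frac{N}{2}C+t)^{2}=\frac{N^{2}}{4}C^{2}+NCt+t^{2}$ with $t=d_{1}(u,E)^{2}$ (resp.\ $t=\frac{1}{2}d_{2}(u,E)^{2}$), and subtracting $\frac{N^{2}}{4}C^{2}$ yields $\delta_{2}(u)\ge NC\,d_{1}(u,E)^{2}+d_{1}(u,E)^{4}$ (resp.\ $\delta_{2}(u)\ge \frac{N}{2}C\,d_{2}(u,E)^{2}+\frac{1}{4}d_{2}(u,E)^{4}$). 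No analytic input beyond Theorem \ref{E2} is needed here.

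\textbf{Step 2: sharpness and attainability.} The squaring step is equality-preserving: for nonnegative reals $X\ge Y\ge 0$ one has $X^{2}=Y^{2}$ iff $X=Y$. Hence equality in the first inequality of Theorem \ref{E3} at a nontrivial $u$ is equivalent to $A^{1/2}B^{1/2}=\frac{N}{2}C+d_{1}(u,E)^{2}$, i.e.\ to equality in the first inequality of Theorem \ref{E2}; similarly for the second pair. Since Theorem \ref{E2} asserts that both of its estimates are attained by nontrivial functions, the very same functions realize equality in Theorem \ref{E3}, which in particular shows that none of the constants $N,1,\frac{N}{2},\frac{1}{4}$ can be increased. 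To display the extremizers concretely I would unwind the proof of Theorem \ref{E2}: it proceeds from the exact-remainder form of the scale non-invariant Heisenberg inequality supplied by the one-parameter family, then invokes the weighted Poincar\'e inequality for the Gaussian log-concave probability measure from Section 3; tracing equality backwards, the extremal $u$ is of the form $ce^{-\alpha|x|^{2}}$ corrected by a bottom nonconstant eigenfunction of the associated Ornstein--Uhlenbeck operator (a degree-one Hermite polynomial times the Gaussian), with the correction's amplitude pinned, in the second case, by the normalization $\|u\|_{2}=\|ce^{-\alpha|x|^{2}}\|_{2}$.

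\textbf{Main obstacle.} The delicate point is Step 2, not Step 1: one must check that the weighted Poincar\'e inequality used in the proof of Theorem \ref{E2} is itself sharp \emph{and} attained, that its extremal eigenfunction pulls back to a function with $A,B,C$ all finite (so that it is admissible), and that the chain of reductions---from $\delta_{2}$ to $\delta_{1}$, and from the optimal Heisenberg inequality to its scale non-invariant companion---introduces no strict loss at any stage. Once the equality characterizations are shown to be simultaneously non-degenerate at a common nontrivial $u$, the sharpness assertions of Theorem \ref{E3} follow immediately.
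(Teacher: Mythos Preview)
Your proposal is correct and follows essentially the same route as the paper: the two inequalities of Theorem \ref{E3} are obtained by squaring the two inequalities of Theorem \ref{E2}, and sharpness is inherited because squaring preserves equality cases. The paper makes the extremizer fully explicit---it is $u(x)=x_{1}e^{-\frac{1}{2}|x|^{2}}$, exactly the ``degree-one Hermite polynomial times the Gaussian'' you anticipate---and verifies by direct computation (in the proofs of Theorems \ref{T3.2} and \ref{T3.3}) that this function achieves equality in both estimates of Theorem \ref{E2}; your abstract description of the extremizer and the equality chain is accurate, and the admissibility concern you flag is trivially satisfied for this function.
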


It was also showed in \cite{McV21} that for any two nonnegative constants
$C_{1}$ and $C_{2}$ such that $C_{1}^{2}+C_{2}^{2}>0$, there exists
$u\in\left\{  u\in W^{1,2}\left(  \mathbb{R}^{N}\right)  :\left\Vert
xu\right\Vert _{2}<\infty\right\}  $, $\left\Vert u\right\Vert _{2}=1$ and
$u^{\ast}\in E$ such that%
\[
\delta_{2}\left(  u\right)  \leq C_{1}\left\Vert \nabla\left(  u-u^{\ast
}\right)  \right\Vert _{2}^{2}+C_{2}\left\Vert x\left(  u-u^{\ast}\right)
\right\Vert _{2}^{2}.
\]
That is, there is no quantitative stability version of the scale invariant
Heisenberg Uncertainty Principle if we use the norm $\left\Vert \nabla\left(
u-u^{\ast}\right)  \right\Vert _{2}$ or $\left\Vert x\left(  u-u^{\ast
}\right)  \right\Vert _{2}$ as distance functions. In this paper, we will also
show that this is not the case for the scale non-invariant Heisenberg
Uncertainty Principle:

\begin{theorem}
\label{E4}For all $u\in\left\{  u\in W^{1,2}\left(  \mathbb{R}^{N}\right)
:\left\Vert xu\right\Vert _{2}<\infty\right\}  $, then%
\begin{align*}
&  {\int\limits_{\mathbb{R}^{N}}}\left\vert \nabla u\right\vert ^{2}%
\mathrm{dx}+{\int\limits_{\mathbb{R}^{N}}}\left\vert x\right\vert
^{2}\left\vert u\right\vert ^{2}\mathrm{dx}-N{\int\limits_{\mathbb{R}^{N}}%
}\left\vert u\right\vert ^{2}\mathrm{dx}\\
&  \geq\frac{2}{N+3}\inf_{c\in\mathbb{R}}\left(  {\int\limits_{\mathbb{R}^{N}%
}}\left\vert \nabla\left(  u-ce^{-\frac{1}{2}\left\vert x\right\vert ^{2}%
}\right)  \right\vert ^{2}\mathrm{dx}+{\int\limits_{\mathbb{R}^{N}}}\left\vert
x\right\vert ^{2}\left\vert u-ce^{-\frac{1}{2}\left\vert x\right\vert ^{2}%
}\right\vert ^{2}\mathrm{dx}+{\int\limits_{\mathbb{R}^{N}}}\left\vert
u-ce^{-\frac{1}{2}\left\vert x\right\vert ^{2}}\right\vert ^{2}\mathrm{dx}%
\right)  .
\end{align*}
Moreover, the inequality is sharp and the equality can be attained by
nontrivial functions.
\end{theorem}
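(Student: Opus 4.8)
The plan is to strip off the Gaussian ground state via the substitution $u(x)=f(x)\,e^{-|x|^{2}/2}$ and recognize the left-hand side of Theorem \ref{E4} as a weighted Dirichlet energy. Writing $d\mu=e^{-|x|^{2}}\mathrm{dx}$ and using $\nabla u=(\nabla f-fx)e^{-|x|^{2}/2}$, one integration by parts that transfers the cross term $-2f\,x\cdot\nabla f$ onto the weight (via $\nabla\cdot(x\,e^{-|x|^{2}})=(N-2|x|^{2})e^{-|x|^{2}}$) makes all weight-dependent terms cancel and yields the exact-remainder identity
\[
\int_{\mathbb{R}^{N}}|\nabla u|^{2}\mathrm{dx}+\int_{\mathbb{R}^{N}}|x|^{2}|u|^{2}\mathrm{dx}-N\int_{\mathbb{R}^{N}}|u|^{2}\mathrm{dx}=\int_{\mathbb{R}^{N}}|\nabla f|^{2}\,d\mu .
\]
This is precisely the scale non-invariant (non-optimal) member of the one-parameter Heisenberg family, and it already exhibits the left side as nonnegative, vanishing exactly on the line $\{c\,e^{-|x|^{2}/2}:c\in\mathbb{R}\}$.

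Next I would apply the same identity to $u-c\,e^{-|x|^{2}/2}=(f-c)e^{-|x|^{2}/2}$. Since the gradient term is unchanged by the shift, this gives
\begin{align*}
&\int_{\mathbb{R}^{N}}|\nabla(u-c\,e^{-|x|^{2}/2})|^{2}\mathrm{dx}+\int_{\mathbb{R}^{N}}|x|^{2}|u-c\,e^{-|x|^{2}/2}|^{2}\mathrm{dx}+\int_{\mathbb{R}^{N}}|u-c\,e^{-|x|^{2}/2}|^{2}\mathrm{dx}\\
&\qquad=\int_{\mathbb{R}^{N}}|\nabla f|^{2}d\mu+(N+1)\int_{\mathbb{R}^{N}}(f-c)^{2}d\mu .
\end{align*}
Minimizing over $c\in\mathbb{R}$ replaces $c$ by the mean $\bar f:=\big(\int_{\mathbb{R}^{N}}e^{-|x|^{2}}\mathrm{dx}\big)^{-1}\int_{\mathbb{R}^{N}}f\,e^{-|x|^{2}}\mathrm{dx}$, so the right-hand side of Theorem \ref{E4} equals $\frac{2}{N+3}\big(\int_{\mathbb{R}^{N}}|\nabla f|^{2}d\mu+(N+1)\int_{\mathbb{R}^{N}}(f-\bar f)^{2}d\mu\big)$. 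Combining this with the identity of the first step and clearing the common factor $\frac{N+1}{N+3}$, the claimed inequality is seen to be equivalent to
\[
\int_{\mathbb{R}^{N}}|\nabla f|^{2}\,d\mu\ \geq\ 2\int_{\mathbb{R}^{N}}(f-\bar f)^{2}\,d\mu .
\]

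This last inequality is exactly the Gaussian Poincar\'{e} inequality for the log-concave probability measure proportional to $e^{-|x|^{2}}\mathrm{dx}$, with its optimal constant $2$ (the spectral gap of the corresponding Ornstein--Uhlenbeck operator); I would invoke it either from the classical literature or as the Gaussian special case of the weighted Poincar\'{e} inequality established earlier in the paper, which closes the proof of the inequality. For sharpness, equality in the Gaussian Poincar\'{e} inequality forces $f-\bar f$ into the first eigenspace, i.e. $f(x)=a+b\cdot x$, equivalently $u(x)=(a+b\cdot x)e^{-|x|^{2}/2}$; for any $b\neq 0$ (for instance $u=x_{1}e^{-|x|^{2}/2}$) every inequality used above becomes an equality and both sides of Theorem \ref{E4} are equal and strictly positive, so $\frac{2}{N+3}$ is optimal and is attained by nontrivial functions. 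Conceptually, the infimum is taken only over the scalar $c$ because breaking scale invariance collapses the extremal set to the one-dimensional line $\{c\,e^{-|x|^{2}/2}\}$, and it is precisely this rigidity that makes a Bianchi--Egnell type estimate possible here, in contrast to the scale invariant Heisenberg principle.

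The only delicate point --- more technical than a true obstacle --- is justifying the substitution identity rigorously: one must check that $f=u\,e^{|x|^{2}/2}$ lies in the Gaussian Sobolev space $H^{1}(d\mu)$ (which follows since $\int f^{2}d\mu=\int_{\mathbb{R}^{N}}|u|^{2}\mathrm{dx}<\infty$ and $\int|\nabla f|^{2}d\mu$ equals the finite quantity $\int|\nabla u|^{2}\mathrm{dx}+\int|x|^{2}|u|^{2}\mathrm{dx}-N\int|u|^{2}\mathrm{dx}$ under the hypotheses $u\in W^{1,2}(\mathbb{R}^{N})$, $|x|u\in L^{2}(\mathbb{R}^{N})$), and that the integration by parts is valid for such $f$, which one obtains by approximation by smooth compactly supported functions together with the fact that the Gaussian weight annihilates all boundary terms. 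Beyond this the argument is elementary algebra plus the single input of the sharp Gaussian Poincar\'{e} inequality. (Equivalently, one may bypass the substitution and expand $u$ in the Hermite eigenfunctions of $-\Delta+|x|^{2}$, whose eigenvalues are $N+2k$: the deficit becomes $2\sum_{k\geq1}k\|P_{k}u\|_{2}^{2}$ and the right-hand side becomes $\frac{2}{N+3}\sum_{k\geq1}(N+2k+1)\|P_{k}u\|_{2}^{2}$, the $k=0$ contribution being removed by the infimum since the ground eigenspace is spanned by $e^{-|x|^{2}/2}$ alone, so that the inequality reduces termwise to $k(N+3)\geq N+2k+1$, i.e. $(k-1)(N+1)\geq 0$, with equality exactly at $k=1$.)
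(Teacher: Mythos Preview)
Your proof is correct and follows essentially the same route as the paper: the substitution $u=f e^{-|x|^{2}/2}$ (the paper writes $v$ for your $f$) to obtain the identity $\int|\nabla u|^{2}+\int|x|^{2}|u|^{2}-N\int|u|^{2}=\int|\nabla f|^{2}e^{-|x|^{2}}\mathrm{dx}$, the same computation showing the right-hand side of the theorem equals $\int|\nabla f|^{2}d\mu+(N+1)\int(f-c)^{2}d\mu$, reduction to the sharp Gaussian Poincar\'e inequality with constant $2$, and the extremizer $u=x_{1}e^{-|x|^{2}/2}$. Your parenthetical Hermite-spectrum argument is a clean alternative not present in the paper, but the main line is the same.
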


We end this introduction with the following remark. Recently, Anh Do and the
second, third and fourth authors have established in \cite{DFLL22} the
stability of the $L^{p}$-Caffarelli-Kohn-Nirenberg inequalities by proving the
$L^{p}$-Caffarelli-Kohn-Nirenberg identities and the weighted version of the
$L^{p}$-Poincar\'{e} inequality for log-concave measures.

Our paper is organized as follows: In Section 2, we will set up a general
identity and use it to establish the $L^{2}$-Hardy and $L^{2}$%
-Caffarelli-Kohn-Nirenberg identities. Several examples will also be provided
in Section 2. In Section 3, we will use some of these identities to study the
sharp stability results for the Heisenberg Uncertainty Principle as well as
several quantitative results about the stability of the
Caffarelli-Kohn-Nirenberg inequalities.

\section{$L^{2}$-Hardy and $L^{2}$-Caffarelli-Kohn-Nirenberg identities on
$\mathbb{R}^{N}$}

In this section we will set up general versions of the $L^{2}$-Hardy and
$L^{2}$-Caffarelli-Kohn-Nirenberg identities on $\mathbb{R}^{N}$. Denote
$\mathcal{R}u\left(  x\right)  :=$ $\frac{x}{\left\vert x\right\vert }%
\cdot\nabla u\left(  x\right)  $. This is the radial derivative. That is, in
the polar coordinate $x=r\sigma$, $\mathcal{R}u=\partial_{r}u$. Let $H\in
C^{1}\left(  0,R\right)  $, $0<R\leq\infty$. By direct computation, we have
\begin{align*}
\operatorname{div}\left(  H\left(  \left\vert x\right\vert \right)  \frac
{x}{\left\vert x\right\vert }\right)   &  =\sum_{i=1}^{N}\frac{\partial
}{\partial x_{i}}\left(  H\left(  \left\vert x\right\vert \right)  \frac
{x_{i}}{\left\vert x\right\vert }\right) \\
&  =N\frac{H\left(  \left\vert x\right\vert \right)  }{\left\vert x\right\vert
}+\frac{\left\vert x\right\vert H^{\prime}\left(  \left\vert x\right\vert
\right)  -H\left(  \left\vert x\right\vert \right)  }{\left\vert x\right\vert
}\\
&  =H^{\prime}\left(  \left\vert x\right\vert \right)  +\left(  N-1\right)
\frac{H\left(  \left\vert x\right\vert \right)  }{\left\vert x\right\vert }.
\end{align*}
Hence for $u\in C_{0}^{\infty}\left(  B_{R}\setminus\left\{  0\right\}
\right)  $, we have by the Divergence Theorem that%
\begin{align}
&  -{\int\limits_{B_{R}}}\left[  H^{\prime}\left(  \left\vert x\right\vert
\right)  +\left(  N-1\right)  \frac{H\left(  \left\vert x\right\vert \right)
}{\left\vert x\right\vert }\right]  \left\vert u\right\vert ^{2}%
\mathrm{dx}\nonumber\\
&  =-{\int\limits_{B_{R}}}\operatorname{div}\left(  H\left(  \left\vert
x\right\vert \right)  \frac{x}{\left\vert x\right\vert }\right)  \left\vert
u\right\vert ^{2}\mathrm{dx}\nonumber\\
&  ={\int\limits_{B_{R}}}H\left(  \left\vert x\right\vert \right)  \frac
{x}{\left\vert x\right\vert }\cdot\nabla\left\vert u\right\vert ^{2}%
\mathrm{dx}\nonumber\\
&  ={\int\limits_{B_{R}}}2H\left(  \left\vert x\right\vert \right)  u\left(
x\right)  \left(  \frac{x}{\left\vert x\right\vert }\cdot\nabla u\left(
x\right)  \right)  \mathrm{dx}\nonumber\\
&  ={\int\limits_{B_{R}}}2H\left(  \left\vert x\right\vert \right)  u\left(
x\right)  \mathcal{R}u\left(  x\right)  \mathrm{dx}. \label{2.1}%
\end{align}
We will now apply the above identity for
\[
H\left(  r\right)  =A\left(  r\right)  B\left(  r\right)  .
\]
Note that in this case,
\begin{align*}
&  H^{\prime}\left(  r\right)  +\left(  N-1\right)  \frac{H\left(  r\right)
}{r}\\
&  =A^{\prime}\left(  r\right)  B\left(  r\right)  +A\left(  r\right)
B^{\prime}\left(  r\right)  +\left(  N-1\right)  \frac{A\left(  r\right)
B\left(  r\right)  }{r}\\
&  =\frac{1}{\alpha^{2}}B^{2}\left(  r\right)  +\left[  C\left(  r\right)
+B^{2}\left(  r\right)  -\frac{1}{\alpha^{2}}B^{2}\left(  r\right)  \right]
\end{align*}
for some $\alpha\neq0$. Here
\[
C\left(  r\right)  =\left(  A\left(  r\right)  B\left(  r\right)  \right)
^{\prime}+\left(  N-1\right)  \frac{A\left(  r\right)  B\left(  r\right)  }%
{r}-B^{2}\left(  r\right)  \text{.}%
\]
Then (\ref{2.1}) gives
\begin{align}
&  -{\int\limits_{B_{R}}}\frac{1}{\alpha^{2}}B^{2}\left(  \left\vert
x\right\vert \right)  \left\vert u\left(  x\right)  \right\vert ^{2}%
\mathrm{dx}-{\int\limits_{B_{R}}}\left[  C\left(  \left\vert x\right\vert
\right)  +B^{2}\left(  \left\vert x\right\vert \right)  -\frac{1}{\alpha^{2}%
}B^{2}\left(  \left\vert x\right\vert \right)  \right]  \left\vert u\left(
x\right)  \right\vert ^{2}\mathrm{dx}\nonumber\\
&  ={\int\limits_{B_{R}}}2\left(  \frac{1}{\alpha}B\left(  \left\vert
x\right\vert \right)  u\left(  x\right)  \right)  \left(  \alpha A\left(
\left\vert x\right\vert \right)  \mathcal{R}u\left(  x\right)  \right)
\mathrm{dx}. \label{2.2}%
\end{align}
Equivalently, by using the identity $-2ab=a^{2}+b^{2}-\left(  a+b\right)
^{2}$:%
\begin{align*}
&  \alpha^{2}{\int\limits_{B_{R}}}A^{2}\left(  \left\vert x\right\vert
\right)  \left\vert \nabla u\left(  x\right)  \right\vert ^{2}\mathrm{dx}\\
&  ={\int\limits_{B_{R}}}\left[  C\left(  \left\vert x\right\vert \right)
+B^{2}\left(  \left\vert x\right\vert \right)  -\frac{1}{\alpha^{2}}%
B^{2}\left(  \left\vert x\right\vert \right)  \right]  \left\vert u\left(
x\right)  \right\vert ^{2}\mathrm{dx}+{\int\limits_{B_{R}}}\left\vert \alpha
A\left(  \left\vert x\right\vert \right)  \nabla u\left(  x\right)  +\frac
{1}{\alpha}B\left(  \left\vert x\right\vert \right)  u\left(  x\right)
\frac{x}{\left\vert x\right\vert }\right\vert ^{2}\mathrm{dx}%
\end{align*}
and
\begin{align*}
&  \alpha^{2}{\int\limits_{B_{R}}}A^{2}\left(  \left\vert x\right\vert
\right)  \left\vert \mathcal{R}u\left(  x\right)  \right\vert ^{2}%
\mathrm{dx}\\
&  ={\int\limits_{B_{R}}}\left[  C\left(  \left\vert x\right\vert \right)
+B^{2}\left(  \left\vert x\right\vert \right)  -\frac{1}{\alpha^{2}}%
B^{2}\left(  \left\vert x\right\vert \right)  \right]  \left\vert u\left(
x\right)  \right\vert ^{2}\mathrm{dx}+{\int\limits_{B_{R}}}\left\vert \alpha
A\left(  \left\vert x\right\vert \right)  \mathcal{R}u\left(  x\right)
+\frac{1}{\alpha}B\left(  \left\vert x\right\vert \right)  u\left(  x\right)
\right\vert ^{2}\mathrm{dx}.
\end{align*}
Hence, we get the following families of identities:

\begin{lemma}
\label{key}\textit{Let }$0<R\leq\infty$\textit{, }$A$\textit{ and }$B$\textit{
be }$C^{1}$-\textit{functions on }$\left(  0,R\right)  $ and let
\[
C\left(  r\right)  =\left(  A\left(  r\right)  B\left(  r\right)  \right)
^{\prime}+\left(  N-1\right)  \frac{A\left(  r\right)  B\left(  r\right)  }%
{r}-B^{2}\left(  r\right)  \text{.}%
\]
Then for all $\alpha\in\mathbb{R}\setminus\left\{  0\right\}  $ and $u\in
C_{0}^{\infty}\left(  B_{R}\setminus\left\{  0\right\}  \right)  $, we have%
\begin{align}
&  \alpha^{2}{\int\limits_{B_{R}}}A^{2}\left(  \left\vert x\right\vert
\right)  \left\vert \mathcal{R}u\left(  x\right)  \right\vert ^{2}%
\mathrm{dx}+\frac{1}{\alpha^{2}}{\int\limits_{B_{R}}}B^{2}\left(  \left\vert
x\right\vert \right)  \left\vert u\left(  x\right)  \right\vert ^{2}%
\mathrm{dx}\nonumber\\
&  ={\int\limits_{B_{R}}}\left[  C\left(  \left\vert x\right\vert \right)
+B^{2}\left(  \left\vert x\right\vert \right)  \right]  \left\vert
u\right\vert ^{2}\mathrm{dx}+{\int\limits_{B_{R}}}\left\vert \alpha A\left(
\left\vert x\right\vert \right)  \mathcal{R}u+\frac{1}{\alpha}B\left(
\left\vert x\right\vert \right)  u\right\vert ^{2}\mathrm{dx} \label{CKN2.1}%
\end{align}
and%
\begin{align}
&  \alpha^{2}{\int\limits_{B_{R}}}A^{2}\left(  \left\vert x\right\vert
\right)  \left\vert \nabla u\right\vert ^{2}\mathrm{dx}+\frac{1}{\alpha^{2}%
}{\int\limits_{B_{R}}}B^{2}\left(  \left\vert x\right\vert \right)  \left\vert
u\right\vert ^{2}\mathrm{dx}\nonumber\\
&  ={\int\limits_{B_{R}}}\left[  C\left(  \left\vert x\right\vert \right)
+B^{2}\left(  \left\vert x\right\vert \right)  \right]  \left\vert u\left(
x\right)  \right\vert ^{2}\mathrm{dx}+{\int\limits_{B_{R}}}\left\vert \alpha
A\left(  \left\vert x\right\vert \right)  \nabla u\left(  x\right)  +\frac
{1}{\alpha}B\left(  \left\vert x\right\vert \right)  u\left(  x\right)
\frac{x}{\left\vert x\right\vert }\right\vert ^{2}\mathrm{dx}. \label{CKN2.2}%
\end{align}

\end{lemma}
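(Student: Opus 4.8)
The plan is to read both identities off from the divergence identity \eqref{2.1}, already established in the paragraphs preceding the statement, specialized to $H=AB$. First I would record that, with $C(r)$ as defined in the statement, the coefficient of $|u|^{2}$ on the left of \eqref{2.1} simplifies:
\[
H'(r)+(N-1)\frac{H(r)}{r}=\bigl(A(r)B(r)\bigr)'+(N-1)\frac{A(r)B(r)}{r}=C(r)+B^{2}(r),
\]
so that \eqref{2.1} reads
\[
-\int_{B_{R}}\bigl[C(|x|)+B^{2}(|x|)\bigr]|u|^{2}\,\mathrm{dx}=\int_{B_{R}}2A(|x|)B(|x|)\,u\,\mathcal{R}u\,\mathrm{dx}.
\]

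The second, and essentially only, step is to complete the square. For \eqref{CKN2.2} I would expand, for $\alpha\neq0$,
\[
\Bigl|\alpha A(|x|)\nabla u+\tfrac{1}{\alpha}B(|x|)u\tfrac{x}{|x|}\Bigr|^{2}=\alpha^{2}A^{2}|\nabla u|^{2}+2A B\,u\,\mathcal{R}u+\tfrac{1}{\alpha^{2}}B^{2}|u|^{2},
\]
using that the cross term only sees the radial part $\nabla u\cdot\tfrac{x}{|x|}=\mathcal{R}u$. Solving this relation for $2ABu\,\mathcal{R}u$, substituting into the displayed form of \eqref{2.1}, and rearranging produces \eqref{CKN2.2} exactly. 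The identity \eqref{CKN2.1} follows in the same way, with the vector $\nabla u$ replaced everywhere by the scalar $\mathcal{R}u$: the square $|\alpha A\,\mathcal{R}u+\tfrac{1}{\alpha}Bu|^{2}$ expands with the identical cross term $2ABu\,\mathcal{R}u$, so the same cancellation goes through. Alternatively, \eqref{CKN2.1} and \eqref{CKN2.2} are linked by the orthogonal decomposition $\nabla u=(\mathcal{R}u)\tfrac{x}{|x|}+\nabla_{T}u$ with $\nabla_{T}u\perp x$, which gives $|\nabla u|^{2}=|\mathcal{R}u|^{2}+|\nabla_{T}u|^{2}$ and $|\alpha A\nabla u+\tfrac{1}{\alpha}Bu\tfrac{x}{|x|}|^{2}=|\alpha A\,\mathcal{R}u+\tfrac{1}{\alpha}Bu|^{2}+\alpha^{2}A^{2}|\nabla_{T}u|^{2}$; adding $\alpha^{2}\int_{B_{R}}A^{2}|\nabla_{T}u|^{2}\,\mathrm{dx}$ to both sides of \eqref{CKN2.1} then yields \eqref{CKN2.2}.

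There is no genuine obstacle here: the analytic content is entirely in \eqref{2.1}, and the remainder is the elementary pointwise identity $-2ab=a^{2}+b^{2}-(a+b)^{2}$. The two points that need a little care are the bookkeeping — verifying that the two occurrences of $\tfrac{1}{\alpha^{2}}B^{2}|u|^{2}$ cancel so that this term survives with the correct sign on the left-hand side — and the hypotheses needed to run \eqref{2.1}: since $u\in C_{0}^{\infty}(B_{R}\setminus\{0\})$, every integrand is compactly supported in the open region where $A,B\in C^{1}$, so the divergence theorem applies with no boundary or singularity contributions. For complex-valued $u$ one replaces $u\,\mathcal{R}u$ by $\operatorname{Re}(\bar{u}\,\mathcal{R}u)=\tfrac{1}{2}\mathcal{R}|u|^{2}$ throughout and the statement is unchanged.
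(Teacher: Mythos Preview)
Your proposal is correct and matches the paper's own argument: both apply the divergence identity \eqref{2.1} with $H=AB$, recognize that $H'+(N-1)H/r=C+B^{2}$, and then complete the square via $-2ab=a^{2}+b^{2}-(a+b)^{2}$ to produce the two identities. The paper presents the computation in a slightly different order (introducing the artificial splitting $C+B^{2}=\tfrac{1}{\alpha^{2}}B^{2}+[C+B^{2}-\tfrac{1}{\alpha^{2}}B^{2}]$ before completing the square), but the content is identical.
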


By choosing $\alpha=1$, we obtain the $L^{2}$-Hardy identities while by
optimizing $\alpha$, we get the $L^{2}$-Caffarelli-Kohn-Nirenberg identities.

\subsection{$L^{2}$-Hardy identities}

By choose $\alpha=1$ in the Lemma \ref{key}, we have the following $L^{2}%
$-Hardy identities

\begin{theorem}
\label{T1}\textit{Let }$0<R\leq\infty$\textit{, }$A$\textit{ and }$B$\textit{
be }$C^{1}$-\textit{functions on }$\left(  0,R\right)  $ and let
\[
C\left(  r\right)  =\left(  A\left(  r\right)  B\left(  r\right)  \right)
^{\prime}+\left(  N-1\right)  \frac{A\left(  r\right)  B\left(  r\right)  }%
{r}-B^{2}\left(  r\right)  \text{.}%
\]
Then for all $u\in C_{0}^{\infty}\left(  B_{R}\setminus\left\{  0\right\}
\right)  $, we have%
\begin{equation}
{\int\limits_{B_{R}}}A^{2}\left(  \left\vert x\right\vert \right)  \left\vert
\mathcal{R}u\right\vert ^{2}\mathrm{dx}={\int\limits_{B_{R}}}C\left(
\left\vert x\right\vert \right)  \left\vert u\right\vert ^{2}\mathrm{dx}%
+{\int\limits_{B_{R}}}\left\vert A\left(  \left\vert x\right\vert \right)
\mathcal{R}u+B\left(  \left\vert x\right\vert \right)  u\right\vert
^{2}\mathrm{dx} \label{H2.1}%
\end{equation}
and
\begin{equation}
{\int\limits_{B_{R}}}A^{2}\left(  \left\vert x\right\vert \right)  \left\vert
\nabla u\right\vert ^{2}\mathrm{dx}={\int\limits_{B_{R}}}C\left(  \left\vert
x\right\vert \right)  \left\vert u\right\vert ^{2}\mathrm{dx}+{\int
\limits_{B_{R}}}\left\vert A\left(  \left\vert x\right\vert \right)  \nabla
u+B\left(  \left\vert x\right\vert \right)  u\frac{x}{\left\vert x\right\vert
}\right\vert ^{2}\mathrm{dx}. \label{H2.2}%
\end{equation}

\end{theorem}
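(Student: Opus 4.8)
The plan is to read off Theorem \ref{T1} as the case $\alpha = 1$ of Lemma \ref{key}, which is already in hand. Putting $\alpha = 1$ in \eqref{CKN2.1} gives
\[
\int_{B_R} A^2(|x|)\,|\mathcal{R}u|^2\,\mathrm{dx} + \int_{B_R} B^2(|x|)\,|u|^2\,\mathrm{dx} = \int_{B_R}\bigl[C(|x|) + B^2(|x|)\bigr]|u|^2\,\mathrm{dx} + \int_{B_R}\bigl|A(|x|)\mathcal{R}u + B(|x|)u\bigr|^2\,\mathrm{dx},
\]
and cancelling the common term $\int_{B_R} B^2(|x|)|u|^2\,\mathrm{dx}$ from the two sides yields \eqref{H2.1}. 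Likewise, setting $\alpha = 1$ in \eqref{CKN2.2} and performing the same cancellation produces \eqref{H2.2}. So, granted Lemma \ref{key}, there is essentially nothing left to do.

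If one prefers a direct argument not passing through the general $\alpha$ identity, I would instead start from the divergence identity \eqref{2.1} applied with $H = AB$. Expanding the square pointwise,
\[
\bigl|A(|x|)\mathcal{R}u + B(|x|)u\bigr|^2 = A^2(|x|)\,|\mathcal{R}u|^2 + 2A(|x|)B(|x|)\,u\,\mathcal{R}u + B^2(|x|)\,|u|^2,
\]
and by \eqref{2.1} with $H = AB$, together with the definition of $C(r)$, one has
\[
\int_{B_R} 2A(|x|)B(|x|)\,u\,\mathcal{R}u\,\mathrm{dx} = -\int_{B_R}\bigl[C(|x|) + B^2(|x|)\bigr]|u|^2\,\mathrm{dx}.
\]
Integrating the expansion and substituting this evaluation, the $B^2(|x|)|u|^2$ contributions cancel and \eqref{H2.1} drops out. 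For \eqref{H2.2} the only extra observation is that $\frac{x}{|x|}\cdot\nabla u = \mathcal{R}u$ and $\bigl|\tfrac{x}{|x|}\bigr| = 1$, so expanding $\bigl|A(|x|)\nabla u + B(|x|)u\tfrac{x}{|x|}\bigr|^2$ produces exactly the same cross term $2A(|x|)B(|x|)\,u\,\mathcal{R}u$, and the identical computation goes through with $|\nabla u|^2$ in place of $|\mathcal{R}u|^2$.

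I do not anticipate any genuine obstacle: the whole content sits in the divergence identity \eqref{2.1} (equivalently, in Lemma \ref{key}) together with the elementary expansion $-2ab = a^2 + b^2 - (a+b)^2$. The single point that deserves a word is the legitimacy of the integration by parts hidden in \eqref{2.1}; but since $u \in C_0^\infty(B_R\setminus\{0\})$ has compact support bounded away from the origin, the functions $A$, $B$, $C$ are $C^1$ (resp.\ continuous) on a neighbourhood of the support of $u$, so all the integrals are finite and no boundary terms arise at $|x| = 0$ or on $\partial B_R$, and the manipulations are justified.
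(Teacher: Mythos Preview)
Your proposal is correct and matches the paper's approach exactly: the paper also obtains Theorem \ref{T1} simply by setting $\alpha = 1$ in Lemma \ref{key}, and your direct alternative via the divergence identity \eqref{2.1} with $H = AB$ is precisely the computation underlying that lemma.
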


Here are some examples that follow from Theorem \ref{T1} immediately by
checking the appropriate pairs of $A, B$ and $C$.

\begin{corollary}
\label{c1}Let $A=1$, $B=\frac{N-2}{2}\frac{1}{r}$. Then $C=\left(  \frac
{N-2}{2}\frac{1}{r}\right)  ^{2}$. Then by the Hardy identities (\ref{H2.1})
and (\ref{H2.2}), for all $u\in C_{0}^{\infty}\left(  \mathbb{R}^{N}%
\setminus\left\{  0\right\}  \right)  :$%
\begin{align*}
{\int\limits_{\mathbb{R}^{N}}}\left\vert \mathcal{R}u\right\vert
^{2}\mathrm{dx}  &  =\left(  \frac{N-2}{2}\right)  ^{2}{\int
\limits_{\mathbb{R}^{N}}}\frac{\left\vert u\right\vert ^{2}}{\left\vert
x\right\vert ^{2}}\mathrm{dx}+{\int\limits_{\mathbb{R}^{N}}}\left\vert
\mathcal{R}u+\frac{N-2}{2}\frac{1}{\left\vert x\right\vert }u\right\vert
^{2}\mathrm{dx}\\
{\int\limits_{\mathbb{R}^{N}}}\left\vert \nabla u\right\vert ^{2}\mathrm{dx}
&  =\left(  \frac{N-2}{2}\right)  ^{2}{\int\limits_{\mathbb{R}^{N}}}%
\frac{\left\vert u\right\vert ^{2}}{\left\vert x\right\vert ^{2}}%
\mathrm{dx}+{\int\limits_{\mathbb{R}^{N}}}\left\vert \nabla u+\frac{N-2}%
{2}\frac{1}{\left\vert x\right\vert }u\frac{x}{\left\vert x\right\vert
}\right\vert ^{2}\mathrm{dx}.
\end{align*}

\end{corollary}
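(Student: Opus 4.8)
The plan is to obtain Corollary \ref{c1} as a direct specialization of the $L^2$-Hardy identities in Theorem \ref{T1}: I take $R=\infty$ (so that $B_R=\mathbb{R}^N$) and the explicit pair
\[
A(r)\equiv 1,\qquad B(r)=\frac{N-2}{2}\,\frac{1}{r}.
\]
Both $A$ and $B$ are $C^{1}$ on $(0,\infty)$, and for $u\in C_{0}^{\infty}(\mathbb{R}^{N}\setminus\{0\})$ every weighted integral below is finite, since such $u$ has compact support bounded away from the origin; hence the hypotheses of Theorem \ref{T1} are met.

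The only computation to carry out is the evaluation of the weight $C$ from its defining formula $C(r)=(A(r)B(r))'+(N-1)\frac{A(r)B(r)}{r}-B^{2}(r)$. With $A(r)B(r)=\frac{N-2}{2r}$ one has $(A(r)B(r))'=-\frac{N-2}{2r^{2}}$, while $(N-1)\frac{A(r)B(r)}{r}=\frac{(N-1)(N-2)}{2r^{2}}$ and $B^{2}(r)=\frac{(N-2)^{2}}{4r^{2}}$; summing the three contributions,
\[
C(r)=\frac{N-2}{2r^{2}}\bigl(-1+(N-1)\bigr)-\frac{(N-2)^{2}}{4r^{2}}
=\frac{(N-2)^{2}}{2r^{2}}-\frac{(N-2)^{2}}{4r^{2}}=\Bigl(\frac{N-2}{2}\,\frac{1}{r}\Bigr)^{2},
\]
which is exactly the value of $C$ asserted in the statement.

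Finally I would substitute $A(|x|)=1$, $B(|x|)=\frac{N-2}{2}\frac{1}{|x|}$ and $C(|x|)=\bigl(\frac{N-2}{2}\bigr)^{2}\frac{1}{|x|^{2}}$ into the two identities (\ref{H2.1}) and (\ref{H2.2}) of Theorem \ref{T1}: the first yields the claimed identity for the radial derivative $\mathcal{R}u$, and the second yields the claimed identity for $\nabla u$. There is essentially no obstacle in this argument; the single point requiring a word of care is that Theorem \ref{T1} is phrased on a finite ball $B_R$, so one must observe that the choice $R=\infty$ is admissible, after which the identities hold on $\mathbb{R}^{N}$ verbatim. As a byproduct one may note that discarding the manifestly nonnegative remainder terms recovers the classical Hardy inequality (\ref{Har}) with sharp constant $\bigl(\frac{N-2}{2}\bigr)^{2}$, the remainder in (\ref{H2.2}) vanishing only when $\nabla u=-\frac{N-2}{2}\frac{u}{|x|}\frac{x}{|x|}$, i.e. $u$ proportional to $|x|^{-(N-2)/2}$, which is not admissible — explaining the non-attainability of the Hardy inequality.
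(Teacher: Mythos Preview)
Your proof is correct and follows exactly the approach indicated in the paper: the corollary is stated as an immediate consequence of Theorem~\ref{T1} obtained by checking that the choice $A=1$, $B=\frac{N-2}{2}\frac{1}{r}$ yields $C=\bigl(\frac{N-2}{2}\frac{1}{r}\bigr)^2$, and your computation of $C$ is exactly this verification. The remark on $R=\infty$ is unnecessary since Theorem~\ref{T1} is already stated for $0<R\le\infty$, but it does no harm.
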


\begin{corollary}
Let \label{c2}$A=r^{-\frac{\lambda}{2}}$, $B=\frac{N-\lambda-2}{2}%
r^{-\frac{\lambda}{2}-1}$. Then $C=B^{2}=\left(  \frac{N-\lambda-2}%
{2}r^{-\frac{\lambda}{2}-1}\right)  ^{2}$. Hence using the Hardy identities
(\ref{H2.1}) and (\ref{H2.2}), we obtain for all $u\in C_{0}^{\infty}\left(
\mathbb{R}^{N}\setminus\left\{  0\right\}  \right)  :$
\begin{align*}
&  {\int\limits_{\mathbb{R}^{N}}}\frac{1}{\left\vert x\right\vert ^{\lambda}%
}\left\vert \mathcal{R}u\right\vert ^{2}\mathrm{dx}-\left(  \frac{N-\lambda
-2}{2}\right)  ^{2}{\int\limits_{\mathbb{R}^{N}}}\frac{1}{\left\vert
x\right\vert ^{\lambda+2}}\left\vert u\right\vert ^{2}\mathrm{dx}\\
&  ={\int\limits_{\mathbb{R}^{N}}}\left\vert \frac{N-\lambda-2}{2}\frac
{1}{\left\vert x\right\vert ^{\frac{\lambda}{2}+1}}u+\frac{1}{\left\vert
x\right\vert ^{\frac{\lambda}{2}}}\mathcal{R}u\right\vert ^{2}\mathrm{dx}\\
&  ={\int\limits_{\mathbb{R}^{N}}}\left\vert \frac{1}{\left\vert x\right\vert
^{\frac{N-2}{2}}}\mathcal{R}\left(  \left\vert x\right\vert ^{\frac
{N-\lambda-2}{2}}u\right)  \right\vert ^{2}\mathrm{dx}%
\end{align*}
and%
\begin{align*}
&  {\int\limits_{\mathbb{R}^{N}}}\frac{1}{\left\vert x\right\vert ^{\lambda}%
}\left\vert \nabla u\right\vert ^{2}\mathrm{dx}-\left(  \frac{N-\lambda-2}%
{2}\right)  ^{2}{\int\limits_{\mathbb{R}^{N}}}\frac{1}{\left\vert x\right\vert
^{\lambda+2}}\left\vert u\right\vert ^{2}\mathrm{dx}\\
&  ={\int\limits_{\mathbb{R}^{N}}}\left\vert \frac{N-\lambda-2}{2}\frac
{1}{\left\vert x\right\vert ^{\frac{\lambda}{2}+1}}u\frac{x}{\left\vert
x\right\vert }+\frac{1}{\left\vert x\right\vert ^{\frac{\lambda}{2}}}\nabla
u\right\vert ^{2}\mathrm{dx}\\
&  ={\int\limits_{\mathbb{R}^{N}}}\left\vert \frac{1}{\left\vert x\right\vert
^{\frac{N-2}{2}}}\nabla\left(  \left\vert x\right\vert ^{\frac{N-\lambda-2}%
{2}}u\right)  \right\vert ^{2}\mathrm{dx}.
\end{align*}

\end{corollary}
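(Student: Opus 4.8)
The plan is to invoke Theorem \ref{T1} directly with the specified pair $A(r)=r^{-\lambda/2}$, $B(r)=\frac{N-\lambda-2}{2}r^{-\lambda/2-1}$; the only point that needs checking is that the associated weight $C$ collapses to $B^{2}$, after which the two stated identities are just \eqref{H2.1}, \eqref{H2.2} with the remainder terms rewritten as a single weighted radial derivative, resp. weighted gradient.

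\emph{Step 1 (compute $C$).} First I would compute $A(r)B(r)=\frac{N-\lambda-2}{2}r^{-\lambda-1}$, hence $(AB)'(r)=-\frac{(N-\lambda-2)(\lambda+1)}{2}r^{-\lambda-2}$ and $(N-1)\frac{A(r)B(r)}{r}=(N-1)\frac{N-\lambda-2}{2}r^{-\lambda-2}$, while $B^{2}(r)=\bigl(\frac{N-\lambda-2}{2}\bigr)^{2}r^{-\lambda-2}$. Substituting into the definition of $C$ and factoring out $\frac{N-\lambda-2}{2}r^{-\lambda-2}$, the bracket becomes $-(\lambda+1)+(N-1)-\frac{N-\lambda-2}{2}=(N-\lambda-2)-\frac{N-\lambda-2}{2}=\frac{N-\lambda-2}{2}$, so $C(r)=\bigl(\frac{N-\lambda-2}{2}r^{-\lambda/2-1}\bigr)^{2}=B^{2}(r)$, as claimed.

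\emph{Step 2 (identify the remainders).} Feeding $A^{2}(r)=r^{-\lambda}$ and $C(r)=\bigl(\frac{N-\lambda-2}{2}\bigr)^{2}r^{-\lambda-2}$ into \eqref{H2.1} yields, for $u\in C_{0}^{\infty}(\mathbb{R}^{N}\setminus\{0\})$,
\[
{\int\limits_{\mathbb{R}^{N}}}\frac{|\mathcal{R}u|^{2}}{|x|^{\lambda}}\mathrm{dx}-\Bigl(\tfrac{N-\lambda-2}{2}\Bigr)^{2}{\int\limits_{\mathbb{R}^{N}}}\frac{|u|^{2}}{|x|^{\lambda+2}}\mathrm{dx}={\int\limits_{\mathbb{R}^{N}}}\Bigl|\tfrac{N-\lambda-2}{2}\tfrac{u}{|x|^{\lambda/2+1}}+\tfrac{\mathcal{R}u}{|x|^{\lambda/2}}\Bigr|^{2}\mathrm{dx},
\]
which is the first asserted equality. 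For the second equality I would use the product rule for the radial derivative, $\mathcal{R}\bigl(|x|^{(N-\lambda-2)/2}u\bigr)=\frac{N-\lambda-2}{2}|x|^{(N-\lambda-2)/2-1}u+|x|^{(N-\lambda-2)/2}\mathcal{R}u$, and multiply through by $|x|^{-(N-2)/2}$: since $\frac{N-\lambda-2}{2}-\frac{N-2}{2}=-\frac{\lambda}{2}$ and $\frac{N-\lambda-2}{2}-1-\frac{N-2}{2}=-\frac{\lambda}{2}-1$, the right-hand side above becomes ${\int_{\mathbb{R}^{N}}}\bigl|\,|x|^{-(N-2)/2}\mathcal{R}(|x|^{(N-\lambda-2)/2}u)\bigr|^{2}\mathrm{dx}$. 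The gradient version is obtained identically from \eqref{H2.2}, replacing $\mathcal{R}$ by $\nabla$ and the scalar factor by the vector $u\frac{x}{|x|}$, using $\nabla|x|^{\gamma}=\gamma|x|^{\gamma-1}\frac{x}{|x|}$.

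\emph{Main obstacle.} There is no genuine difficulty here — the whole argument is algebraic bookkeeping on top of Theorem \ref{T1}. The only steps requiring a little care are the cancellation in Step 1 that turns the bracket into $\frac{N-\lambda-2}{2}$, and the exponent arithmetic in Step 2 that collapses the two-term remainder into a single weighted derivative; the boundary terms implicit in Theorem \ref{T1} need no attention since $u$ is compactly supported away from the origin.
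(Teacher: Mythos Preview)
Your proposal is correct and follows exactly the approach the paper indicates: the corollary is listed among examples that ``follow from Theorem~\ref{T1} immediately by checking the appropriate pairs of $A$, $B$ and $C$'', and you carry out precisely that check (Step~1) together with the product-rule rewriting of the remainder (Step~2). There is nothing to add.
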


\begin{corollary}
\label{c3}$A=1$, $B=r$, and $C=N-r^{2}$. By the Hardy identities (\ref{H2.1})
and (\ref{H2.2}), for all $u\in C_{0}^{\infty}\left(  \mathbb{R}^{N}\right)
:$%
\begin{align*}
{\int\limits_{\mathbb{R}^{N}}}\left\vert \mathcal{R}u\right\vert
^{2}\mathrm{dx}  &  ={\int\limits_{\mathbb{R}^{N}}}\left(  N-\left\vert
x\right\vert ^{2}\right)  \left\vert u\right\vert ^{2}\mathrm{dx}%
+{\int\limits_{\mathbb{R}^{N}}}\left\vert \mathcal{R}u+\left\vert x\right\vert
u\right\vert ^{2}\mathrm{dx}\\
{\int\limits_{\mathbb{R}^{N}}}\left\vert \nabla u\right\vert ^{2}\mathrm{dx}
&  ={\int\limits_{\mathbb{R}^{N}}}\left(  N-\left\vert x\right\vert
^{2}\right)  \left\vert u\right\vert ^{2}\mathrm{dx}+{\int\limits_{\mathbb{R}%
^{N}}}\left\vert \nabla u+xu\right\vert ^{2}\mathrm{dx}.
\end{align*}
These identities imply the scale noninvariant Uncertainty Principle:%
\begin{align*}
&  {\int\limits_{\mathbb{R}^{N}}}\left\vert \nabla u\right\vert ^{2}%
\mathrm{dx}+{\int\limits_{\mathbb{R}^{N}}}\left\vert x\right\vert
^{2}\left\vert u\right\vert ^{2}\mathrm{dx}\\
&  \geq{\int\limits_{\mathbb{R}^{N}}}\left\vert \mathcal{R}u\right\vert
^{2}\mathrm{dx}+{\int\limits_{\mathbb{R}^{N}}}\left\vert x\right\vert
^{2}\left\vert u\right\vert ^{2}\mathrm{dx}\\
&  \geq N{\int\limits_{\mathbb{R}^{N}}}\left\vert u\right\vert ^{2}%
\mathrm{dx}.
\end{align*}

\end{corollary}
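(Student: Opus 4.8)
The plan is to obtain the two identities by a direct substitution into Theorem \ref{T1}, and then to read off the scale non-invariant Uncertainty Principle from the radial identity together with one elementary pointwise estimate. \emph{Step 1 (the two identities).} Take $A\equiv 1$ and $B(r)=r$; these are $C^{1}$ on $(0,\infty)$, so Theorem \ref{T1} applies with $R=\infty$. Since $A(r)B(r)=r$, we get $(A(r)B(r))'=1$, $(N-1)\frac{A(r)B(r)}{r}=N-1$ and $B^{2}(r)=r^{2}$, whence the function $C$ of Theorem \ref{T1} is
\[
C(r)=(A(r)B(r))'+(N-1)\frac{A(r)B(r)}{r}-B^{2}(r)=1+(N-1)-r^{2}=N-r^{2}.
\]
Inserting $A(|x|)=1$, $B(|x|)=|x|$, $C(|x|)=N-|x|^{2}$ into (\ref{H2.1}) and (\ref{H2.2}) (and using $|x|\,\frac{x}{|x|}=x$ in the second) yields precisely the two displayed equalities, a priori for $u\in C_{0}^{\infty}(\mathbb{R}^{N}\setminus\{0\})$. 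To upgrade this to all $u\in C_{0}^{\infty}(\mathbb{R}^{N})$ one observes that the vector field driving the integration by parts behind (\ref{2.1}) is $H(|x|)\frac{x}{|x|}=|x|\cdot\frac{x}{|x|}=x$, which is smooth on all of $\mathbb{R}^{N}$; hence the Divergence Theorem, and with it the whole derivation of Lemma \ref{key} and Theorem \ref{T1}, applies verbatim to $u\in C_{0}^{\infty}(\mathbb{R}^{N})$. (Alternatively, since none of the integrands $|\mathcal{R}u|^{2}$, $(N-|x|^{2})|u|^{2}$, $\bigl|\mathcal{R}u+|x|u\bigr|^{2}$, $|\nabla u+xu|^{2}$ is singular at the origin, apply the identities to $\eta_{\varepsilon}u$ with $\eta_{\varepsilon}$ a radial cutoff vanishing near $0$ and let $\varepsilon\to0$, all terms converging by dominated convergence.)

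\emph{Step 2 (the Uncertainty Principle).} The first inequality in the displayed chain is the pointwise bound
\[
|\mathcal{R}u(x)|^{2}=\Bigl|\tfrac{x}{|x|}\cdot\nabla u(x)\Bigr|^{2}\le|\nabla u(x)|^{2},
\]
which is Cauchy--Schwarz (equivalently, $(\mathcal{R}u)\tfrac{x}{|x|}$ is the radial component in the orthogonal splitting of $\nabla u$ into radial and tangential parts); integrating and adding $\int_{\mathbb{R}^{N}}|x|^{2}|u|^{2}\mathrm{dx}$ to both sides gives it. The second inequality just discards the nonnegative remainder in the radial identity of Step 1: rearranging that identity yields
\begin{align*}
\int_{\mathbb{R}^{N}}|\mathcal{R}u|^{2}\mathrm{dx}+\int_{\mathbb{R}^{N}}|x|^{2}|u|^{2}\mathrm{dx}
&=N\int_{\mathbb{R}^{N}}|u|^{2}\mathrm{dx}+\int_{\mathbb{R}^{N}}\bigl|\mathcal{R}u+|x|u\bigr|^{2}\mathrm{dx}\\
&\ge N\int_{\mathbb{R}^{N}}|u|^{2}\mathrm{dx}.
\end{align*}
(One could also reach $N\int_{\mathbb{R}^{N}}|u|^{2}\mathrm{dx}$ directly from (\ref{H2.2}) by discarding $\int_{\mathbb{R}^{N}}|\nabla u+xu|^{2}\mathrm{dx}$, but the chain is phrased through $\mathcal{R}u$ because it records the sharper intermediate fact that the radial part of the gradient already dominates.)

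\emph{Main difficulty.} There is essentially none: the argument is a substitution plus a one-line pointwise estimate, and the only point requiring a word of justification is the passage from $C_{0}^{\infty}(\mathbb{R}^{N}\setminus\{0\})$ to $C_{0}^{\infty}(\mathbb{R}^{N})$, handled by the smoothness/approximation remark in Step 1. It is worth recording that equality in the Uncertainty Principle forces $\int_{\mathbb{R}^{N}}\bigl|\mathcal{R}u+|x|u\bigr|^{2}\mathrm{dx}=0$, i.e.\ $\mathcal{R}u=-|x|u$, whose (non-$C_{0}^{\infty}$) solutions are $u(x)=c\,e^{-|x|^{2}/2}$; this isolates the Gaussians as the virtual extremals and links this corollary to the sharp stability statements (Theorems \ref{E2}--\ref{E4}) established later.
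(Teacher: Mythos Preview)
Your proof is correct and follows the same approach as the paper: the paper presents this corollary as an immediate consequence of Theorem \ref{T1} obtained by substituting $A=1$, $B=r$ and computing $C=N-r^{2}$, and you carry out exactly this substitution, adding the (welcome) justification for passing from $C_{0}^{\infty}(\mathbb{R}^{N}\setminus\{0\})$ to $C_{0}^{\infty}(\mathbb{R}^{N})$ that the paper leaves implicit. The Uncertainty Principle chain is likewise derived just as intended, via $|\mathcal{R}u|\le|\nabla u|$ and dropping the nonnegative remainder.
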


\begin{corollary}
\label{c4}If $a+b\neq N-1$, choose $A=sign\left(  N-a-b-1\right)  r^{-b}$,
$B=r^{-a}$. Then $C=\left[  \left\vert N-1-a-b\right\vert r^{-a-b-1}%
-r^{-2a}\right]  $. By the Hardy identities (\ref{H2.1}) and (\ref{H2.2}), for
all $u\in C_{0}^{\infty}\left(  \mathbb{R}^{N}\right)  :$%
\begin{align*}
&  {\int\limits_{B_{R}}}\frac{\left\vert \mathcal{R}u\right\vert ^{2}%
}{\left\vert x\right\vert ^{2b}}\mathrm{dx}+{\int\limits_{B_{R}}}%
\frac{\left\vert u\right\vert ^{2}}{\left\vert x\right\vert ^{2a}}%
\mathrm{dx}-\left\vert N-1-a-b\right\vert {\int\limits_{B_{R}}}\frac
{\left\vert u\right\vert ^{2}}{\left\vert x\right\vert ^{a+b+1}}\mathrm{dx}\\
&  ={\int\limits_{B_{R}}}\left\vert sign\left(  N-a-b-1\right)  \frac
{\mathcal{R}u}{\left\vert x\right\vert ^{b}}+\frac{u}{\left\vert x\right\vert
^{a}}\right\vert ^{2}\mathrm{dx}%
\end{align*}
and%
\begin{align*}
&  {\int\limits_{B_{R}}}\frac{\left\vert \nabla u\right\vert ^{2}}{\left\vert
x\right\vert ^{2b}}\mathrm{dx}+{\int\limits_{B_{R}}}\frac{\left\vert
u\right\vert ^{2}}{\left\vert x\right\vert ^{2a}}\mathrm{dx}-\left\vert
N-1-a-b\right\vert {\int\limits_{B_{R}}}\frac{\left\vert u\right\vert ^{2}%
}{\left\vert x\right\vert ^{a+b+1}}\mathrm{dx}\\
&  ={\int\limits_{B_{R}}}\left\vert sign\left(  N-a-b-1\right)  \frac{\nabla
u}{\left\vert x\right\vert ^{b}}+\frac{u}{\left\vert x\right\vert ^{a}}%
\frac{x}{\left\vert x\right\vert }\right\vert ^{2}\mathrm{dx}.
\end{align*}

\end{corollary}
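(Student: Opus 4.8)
The plan is to apply Theorem~\ref{T1} directly with the indicated choices $A(r)=\mathrm{sign}(N-a-b-1)\,r^{-b}$ and $B(r)=r^{-a}$, so the whole argument reduces to computing the associated weight $C$, checking it matches the stated expression, and recording the trivial fact $A^2\equiv|x|^{-2b}$.

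First I would set $s:=\mathrm{sign}(N-a-b-1)$, which is well-defined and satisfies $s^2=1$ precisely because of the hypothesis $a+b\neq N-1$. Since $A(r)B(r)=s\,r^{-a-b}$, one computes
\[
\bigl(A(r)B(r)\bigr)'+(N-1)\frac{A(r)B(r)}{r}=s\,(-a-b)\,r^{-a-b-1}+(N-1)\,s\,r^{-a-b-1}=s\,(N-1-a-b)\,r^{-a-b-1}.
\]
Because $s\,(N-1-a-b)=|N-1-a-b|$, subtracting $B^2(r)=r^{-2a}$ gives
\[
C(r)=|N-1-a-b|\,r^{-a-b-1}-r^{-2a},
\]
which is exactly the claimed $C$; moreover $A^2(r)=s^2r^{-2b}=r^{-2b}$.

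Next I would substitute these into the identities \eqref{H2.1} and \eqref{H2.2} of Theorem~\ref{T1}. In \eqref{H2.1} the left-hand side becomes $\int_{B_R}|x|^{-2b}|\mathcal{R}u|^2\,\mathrm{dx}$, the first term on the right becomes $\int_{B_R}\bigl(|N-1-a-b|\,|x|^{-a-b-1}-|x|^{-2a}\bigr)|u|^2\,\mathrm{dx}$, and the remainder term is $\int_{B_R}\bigl|\,s\,|x|^{-b}\mathcal{R}u+|x|^{-a}u\,\bigr|^2\,\mathrm{dx}$; transposing the $|x|^{-2a}|u|^2$ and $|N-1-a-b|\,|x|^{-a-b-1}|u|^2$ contributions to the left-hand side yields precisely the first displayed identity of the corollary. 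The second identity follows in the same way from \eqref{H2.2}, with $\mathcal{R}u$ replaced by $\nabla u$ and $s\,|x|^{-b}\mathcal{R}u$ replaced by $s\,|x|^{-b}\nabla u+|x|^{-a}u\,\frac{x}{|x|}$.

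There is no genuine obstacle here; the only points deserving attention are the sign bookkeeping $s^2=1$ and $s(N-1-a-b)=|N-1-a-b|$, which is exactly where the assumption $a+b\neq N-1$ is used, and the fact that Theorem~\ref{T1} is stated for $u\in C_0^\infty(B_R\setminus\{0\})$: the extension to $u\in C_0^\infty(\mathbb{R}^N)$, valid whenever all three weights are locally integrable near the origin, is obtained by the standard cutoff approximation at $0$, exactly as in the preceding corollaries.
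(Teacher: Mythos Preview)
Your proposal is correct and is essentially identical to the paper's treatment: the paper presents Corollary~\ref{c4} (like the other corollaries in that subsection) as following ``immediately by checking the appropriate pairs of $A$, $B$ and $C$'' in Theorem~\ref{T1}, which is precisely the computation you carry out. Your explicit verification that $s(N-1-a-b)=|N-1-a-b|$ and $A^2=r^{-2b}$ is exactly the bookkeeping the paper leaves to the reader.
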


\begin{corollary}
[Hardy inequalities with Bessel pairs]\label{c5}\textit{Let }$0<R\leq\infty
$\textit{, }$V\geq0$\textit{ and }$W$\textit{ be }$C^{1}$-\textit{functions on
}$\left(  0,R\right)  $. Assume that $\left(  r^{N-1}V,r^{N-1}W\right)  $ is a
Bessel pair on $\left(  0,R\right)  $, that is there exists a positive
function $\varphi$ such that
\[
\left(  r^{N-1}V\varphi^{\prime}\right)  ^{\prime}+r^{N-1}W\varphi=0\text{ on
}\left(  0,R\right)  \text{.}%
\]
In this case, choose $A=\sqrt{V}$, $B=-\frac{\varphi^{\prime}}{\varphi}%
\sqrt{V}$. Then $C=W$. Hence by the Hardy identities (\ref{H2.1}) and
(\ref{H2.2}), we deduce for all $u\in C_{0}^{\infty}\left(  B_{R}%
\setminus\left\{  0\right\}  \right)  :$
\begin{align*}
{\int\limits_{B_{R}}}V\left(  \left\vert x\right\vert \right)  \left\vert
\mathcal{R}u\right\vert ^{2}\mathrm{dx}  &  ={\int\limits_{B_{R}}}W\left(
\left\vert x\right\vert \right)  \left\vert u\right\vert ^{2}\mathrm{dx}%
+{\int\limits_{B_{R}}}\left\vert \sqrt{V\left(  \left\vert x\right\vert
\right)  }\mathcal{R}u-\frac{\varphi^{\prime}\left(  \left\vert x\right\vert
\right)  }{\varphi\left(  \left\vert x\right\vert \right)  }\sqrt{V\left(
\left\vert x\right\vert \right)  }u\right\vert ^{2}\mathrm{dx}\\
&  ={\int\limits_{B_{R}}}W\left(  \left\vert x\right\vert \right)  \left\vert
u\right\vert ^{2}\mathrm{dx}+{\int\limits_{B_{R}}}V\left(  \left\vert
x\right\vert \right)  \varphi^{2}\left(  \left\vert x\right\vert \right)
\left\vert \mathcal{R}\left(  \frac{u\left(  x\right)  }{\varphi\left(
\left\vert x\right\vert \right)  }\right)  \right\vert ^{2}\mathrm{dx}%
\end{align*}
and%
\begin{align*}
{\int\limits_{B_{R}}}V\left(  \left\vert x\right\vert \right)  \left\vert
\nabla u\right\vert ^{2}\mathrm{dx}  &  ={\int\limits_{B_{R}}}W\left(
\left\vert x\right\vert \right)  \left\vert u\right\vert ^{2}\mathrm{dx}%
+{\int\limits_{B_{R}}}\left\vert \sqrt{V\left(  \left\vert x\right\vert
\right)  }\nabla u-\frac{\varphi^{\prime}\left(  \left\vert x\right\vert
\right)  }{\varphi\left(  \left\vert x\right\vert \right)  }\sqrt{V\left(
\left\vert x\right\vert \right)  }u\frac{x}{\left\vert x\right\vert
}\right\vert ^{2}\mathrm{dx}\\
&  ={\int\limits_{B_{R}}}W\left(  \left\vert x\right\vert \right)  \left\vert
u\right\vert ^{2}\mathrm{dx}+{\int\limits_{B_{R}}}V\left(  \left\vert
x\right\vert \right)  \varphi^{2}\left(  \left\vert x\right\vert \right)
\left\vert \nabla\left(  \frac{u\left(  x\right)  }{\varphi\left(  \left\vert
x\right\vert \right)  }\right)  \right\vert ^{2}\mathrm{dx}.
\end{align*}

\end{corollary}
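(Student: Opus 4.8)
The plan is to verify that the chosen pair $(A,B)$ makes the hypotheses of the Hardy identities (\ref{H2.1}) and (\ref{H2.2}) applicable, compute the associated function $C$, and then rewrite the two remainder integrals. The only genuine computation is the claim $C=W$. Starting from $A=\sqrt{V}$ and $B=-\frac{\varphi'}{\varphi}\sqrt{V}$ one has $AB=-\frac{\varphi'}{\varphi}V$ and $B^2=\frac{(\varphi')^2}{\varphi^2}V$, so
\[
C = \left(-\frac{\varphi'}{\varphi}V\right)' + \frac{N-1}{r}\left(-\frac{\varphi'}{\varphi}V\right) - \frac{(\varphi')^2}{\varphi^2}V .
\]
Expanding $\left(-\frac{\varphi'}{\varphi}V\right)' = -\frac{V\varphi'' + V'\varphi'}{\varphi} + \frac{(\varphi')^2}{\varphi^2}V$, the two copies of $\frac{(\varphi')^2}{\varphi^2}V$ cancel, leaving $C = -\frac{1}{\varphi}\left(V\varphi'' + V'\varphi' + (N-1)\frac{V\varphi'}{r}\right)$. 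On the other hand, expanding the Bessel-pair equation gives $(r^{N-1}V\varphi')' = r^{N-1}\left(V\varphi'' + V'\varphi' + (N-1)\frac{V\varphi'}{r}\right)$, so $(r^{N-1}V\varphi')' + r^{N-1}W\varphi = 0$ is precisely $V\varphi'' + V'\varphi' + (N-1)\frac{V\varphi'}{r} = -W\varphi$; substituting this identity in yields $C=W$.

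With $C=W$ in hand, Theorem \ref{T1} applies directly and gives, for every $u\in C_0^\infty(B_R\setminus\{0\})$,
\[
\int_{B_R} V\,|\mathcal{R}u|^2\,\mathrm{dx} = \int_{B_R} W\,|u|^2\,\mathrm{dx} + \int_{B_R}\left|\sqrt{V}\,\mathcal{R}u - \frac{\varphi'}{\varphi}\sqrt{V}\,u\right|^2\,\mathrm{dx},
\]
together with the version obtained by replacing $\mathcal{R}u$ by $\nabla u$ and the last integrand by $\left|\sqrt{V}\,\nabla u - \frac{\varphi'}{\varphi}\sqrt{V}\,u\frac{x}{|x|}\right|^2$. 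These are the first equalities in the two displays of the corollary.

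For the second equalities I would factor out $\sqrt{V}$ and use the radial chain rule. Since $\varphi$ depends on $|x|$ only, $\nabla\varphi(|x|) = \varphi'(|x|)\frac{x}{|x|}$, hence $\nabla(u/\varphi) = \frac{1}{\varphi}\left(\nabla u - \frac{\varphi'}{\varphi}u\frac{x}{|x|}\right)$, and taking the radial component $\mathcal{R}(u/\varphi) = \frac{1}{\varphi}\left(\mathcal{R}u - \frac{\varphi'}{\varphi}u\right)$. Consequently $\left|\sqrt{V}\,\mathcal{R}u - \frac{\varphi'}{\varphi}\sqrt{V}\,u\right|^2 = V\varphi^2\,|\mathcal{R}(u/\varphi)|^2$ and $\left|\sqrt{V}\,\nabla u - \frac{\varphi'}{\varphi}\sqrt{V}\,u\frac{x}{|x|}\right|^2 = V\varphi^2\,|\nabla(u/\varphi)|^2$; inserting these into the identities above completes the proof.

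I do not expect a genuine obstacle, since the statement is a specialization of Theorem \ref{T1}; the only points meriting a remark are regularity issues — the Bessel-pair equation tacitly requires $r^{N-1}V\varphi'$ to be differentiable, and one should observe that $u/\varphi \in C_0^\infty(B_R\setminus\{0\})$ because $\varphi>0$ is $C^1$, so every integral is finite and the algebraic manipulations are justified.
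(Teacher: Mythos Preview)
Your proposal is correct and follows precisely the approach the paper intends: the paper presents this corollary as one of several examples that ``follow from Theorem~\ref{T1} immediately by checking the appropriate pairs of $A$, $B$ and $C$,'' without giving further details, and your write-up supplies exactly the omitted verification that $C=W$ together with the chain-rule rewriting of the remainder term. One small quibble: your closing remark that $u/\varphi\in C_0^\infty(B_R\setminus\{0\})$ is not quite right (with $\varphi$ merely $C^1$ the quotient is only $C^1$), but this is irrelevant---you are not applying Theorem~\ref{T1} to $u/\varphi$, only rewriting an integrand pointwise, so finiteness of the integrals is all that is needed.
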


We note that Hardy type inequalities and identities have been investigated by
many researchers. See, \cite{DLL22, GM1, FLL21}, for instance. We also refer
the reader to the papers \cite{LLZ19, LLZ20} in which the Hardy type
inequalities and identities have been studied for more general distance functions.

\subsection{$L^{2}$-Caffarelli-Kohn-Nirenberg identities}

By optimizing $\alpha$ in the Lemma \ref{key}, that is by choosing
\[
\alpha\left(  {\int\limits_{B_{R}}}A^{2}\left(  \left\vert x\right\vert
\right)  \left\vert \mathcal{R}u\right\vert ^{2}\mathrm{dx}\right)  ^{\frac
{1}{2}}=\frac{1}{\alpha}\left(  {\int\limits_{B_{R}}}B^{2}\left(  \left\vert
x\right\vert \right)  \left\vert u\right\vert ^{2}\mathrm{dx}\right)
^{\frac{1}{2}}%
\]
in (\ref{CKN2.1}) and
\[
\alpha\left(  {\int\limits_{B_{R}}}A^{2}\left(  \left\vert x\right\vert
\right)  \left\vert \nabla u\right\vert ^{2}\mathrm{dx}\right)  ^{\frac{1}{2}%
}=\frac{1}{\alpha}\left(  {\int\limits_{B_{R}}}B^{2}\left(  \left\vert
x\right\vert \right)  \left\vert u\right\vert ^{2}\mathrm{dx}\right)
^{\frac{1}{2}}%
\]
in (\ref{CKN2.2}), we have the following Caffarelli-Kohn-Nirenberg identities

\begin{theorem}
\label{T2}\textit{Let }$0<R\leq\infty$\textit{, }$A$\textit{ and }$B$\textit{
be }$C^{1}$-\textit{functions on }$\left(  0,R\right)  $ and let
\[
C\left(  r\right)  =\left(  A\left(  r\right)  B\left(  r\right)  \right)
^{\prime}+\left(  N-1\right)  \frac{A\left(  r\right)  B\left(  r\right)  }%
{r}-B^{2}\left(  r\right)  \text{.}%
\]
Then for all $u\in C_{0}^{\infty}\left(  B_{R}\setminus\left\{  0\right\}
\right)  $, we have%
\begin{align*}
&  \left(  {\int\limits_{B_{R}}}A^{2}\left(  \left\vert x\right\vert \right)
\left\vert \mathcal{R}u\right\vert ^{2}\mathrm{dx}\right)  ^{\frac{1}{2}%
}\left(  {\int\limits_{B_{R}}}B^{2}\left(  \left\vert x\right\vert \right)
\left\vert u\right\vert ^{2}\mathrm{dx}\right)  ^{\frac{1}{2}}\\
&  =\frac{1}{2}{\int\limits_{B_{R}}}\left[  C\left(  \left\vert x\right\vert
\right)  +B^{2}\left(  \left\vert x\right\vert \right)  \right]  \left\vert
u\right\vert ^{2}\mathrm{dx}\\
&  +\frac{1}{2}{\int\limits_{B_{R}}}\left\vert \frac{\left\Vert Bu\right\Vert
_{2}^{\frac{1}{2}}}{\left\Vert A\mathcal{R}u\right\Vert _{2}^{\frac{1}{2}}%
}A\left(  \left\vert x\right\vert \right)  \mathcal{R}u+\frac{\left\Vert
A\mathcal{R}u\right\Vert _{2}^{\frac{1}{2}}}{\left\Vert Bu\right\Vert
_{2}^{\frac{1}{2}}}B\left(  \left\vert x\right\vert \right)  u\right\vert
^{2}\mathrm{dx}%
\end{align*}
and%
\begin{align*}
&  \left(  {\int\limits_{B_{R}}}A^{2}\left(  \left\vert x\right\vert \right)
\left\vert \nabla u\right\vert ^{2}\mathrm{dx}\right)  ^{\frac{1}{2}}\left(
{\int\limits_{B_{R}}}B^{2}\left(  \left\vert x\right\vert \right)  \left\vert
u\right\vert ^{2}\mathrm{dx}\right)  ^{\frac{1}{2}}\\
&  =\frac{1}{2}{\int\limits_{B_{R}}}\left[  C\left(  \left\vert x\right\vert
\right)  +B^{2}\left(  \left\vert x\right\vert \right)  \right]  \left\vert
u\right\vert ^{2}\mathrm{dx}\\
&  +\frac{1}{2}{\int\limits_{B_{R}}}\left\vert \frac{\left\Vert Bu\right\Vert
_{2}^{\frac{1}{2}}}{\left\Vert A\left\vert \nabla u\right\vert \right\Vert
_{2}^{\frac{1}{2}}}A\left(  \left\vert x\right\vert \right)  \nabla
u+\frac{\left\Vert A\left\vert \nabla u\right\vert \right\Vert _{2}^{\frac
{1}{2}}}{\left\Vert Bu\right\Vert _{2}^{\frac{1}{2}}}B\left(  \left\vert
x\right\vert \right)  u\frac{x}{\left\vert x\right\vert }\right\vert
^{2}\mathrm{dx}.
\end{align*}

\end{theorem}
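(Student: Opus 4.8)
The plan is to read off both identities from Lemma~\ref{key} by substituting into (\ref{CKN2.1}) and (\ref{CKN2.2}) the value of the free parameter $\alpha$ that balances the two terms on their respective left-hand sides; this is precisely the ``optimization of $\alpha$'' announced in the sentence preceding the statement. I will carry out the $\mathcal{R}u$-version in detail; the $\nabla u$-version is obtained by the same computation with (\ref{CKN2.2}) in place of (\ref{CKN2.1}), $\nabla u$ in place of $\mathcal{R}u$, and $B(|x|)u\,\frac{x}{|x|}$ in place of $B(|x|)u$ inside the remainder.

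First I would fix $u\in C_0^\infty(B_R\setminus\{0\})$ and abbreviate $P:=\left(\int_{B_R}A^2(|x|)\,|\mathcal{R}u|^2\,\mathrm{dx}\right)^{1/2}$ and $Q:=\left(\int_{B_R}B^2(|x|)\,|u|^2\,\mathrm{dx}\right)^{1/2}$, noting that the coefficients in the claimed remainder are meaningful exactly when $P,Q\in(0,\infty)$, which I assume throughout. Next I would observe that on $\mathbb{R}\setminus\{0\}$ the map $\alpha\mapsto\alpha^2P^2+\alpha^{-2}Q^2$ attains its minimum at $\alpha^2=Q/P$, where its value is $2PQ$, because $\alpha^2P^2+\alpha^{-2}Q^2-2PQ=(\alpha P-\alpha^{-1}Q)^2\ge0$. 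Plugging $\alpha=(Q/P)^{1/2}$ into (\ref{CKN2.1}) then turns its left-hand side into $2PQ$, i.e.\ twice the product $\left(\int_{B_R}A^2|\mathcal{R}u|^2\right)^{1/2}\left(\int_{B_R}B^2|u|^2\right)^{1/2}$ appearing on the left of the claim, while the term $\int_{B_R}\left[C(|x|)+B^2(|x|)\right]|u|^2\,\mathrm{dx}$ on the right is unchanged, and in the remainder $\int_{B_R}\left|\alpha A(|x|)\mathcal{R}u+\alpha^{-1}B(|x|)u\right|^2\mathrm{dx}$ the two coefficients become $\alpha=\|Bu\|_2^{1/2}/\|A\mathcal{R}u\|_2^{1/2}$ and $\alpha^{-1}=\|A\mathcal{R}u\|_2^{1/2}/\|Bu\|_2^{1/2}$, which are exactly the factors in the statement. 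Dividing the resulting identity by $2$ gives the first formula, and repeating the argument with (\ref{CKN2.2}) gives the second.

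I do not expect a genuine obstacle: the whole content is the optimal choice of $\alpha$, which reduces the problem to the already-established identities of Lemma~\ref{key}. The only points worth a line of care are (i) verifying that $\alpha^2=Q/P$ is indeed the minimizer, so that the balanced left-hand side collapses cleanly to $2PQ$, and (ii) recording that the remainder coefficients $\|Bu\|_2^{1/2}/\|A\mathcal{R}u\|_2^{1/2}$ and its reciprocal require both norms to be positive, so the identities are stated under that standing assumption (which is harmless, since the degenerate cases are trivial and the applications in Section~3 always lie in this regime). Everything else is bookkeeping of constants.
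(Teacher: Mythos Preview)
Your proposal is correct and follows exactly the paper's own argument: the paper proves Theorem~\ref{T2} by choosing $\alpha$ in Lemma~\ref{key} so that $\alpha\|A\mathcal{R}u\|_2=\alpha^{-1}\|Bu\|_2$ (respectively with $\nabla u$), which is precisely your $\alpha^2=Q/P$. Your additional remarks on the minimization and on the nondegeneracy assumption $P,Q>0$ are more explicit than what the paper writes, but the method is identical.
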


We provide here some examples:

\begin{corollary}
\label{c6}Choose $A=1$, $B=r$. Then $H=r$, $H^{\prime}\left(  r\right)
+\left(  N-1\right)  \frac{H\left(  r\right)  }{r}=N$ and $C=N-r^{2}$. From
the Caffarelli-Kohn-Nirenberg identities, we have the scale invariant
Heisenberg Uncertainty Principles:
\begin{align*}
&  \left(  {\int\limits_{\mathbb{R}^{N}}}\left\vert \mathcal{R}u\right\vert
^{2}\mathrm{dx}\right)  ^{\frac{1}{2}}\left(  {\int\limits_{\mathbb{R}^{N}}%
}\left\vert x\right\vert ^{2}\left\vert u\right\vert ^{2}\mathrm{dx}\right)
^{\frac{1}{2}}-\frac{N}{2}{\int\limits_{\mathbb{R}^{N}}}\left\vert
u\right\vert ^{2}\mathrm{dx}\\
&  =\frac{1}{2}{\int\limits_{\mathbb{R}^{N}}}\left\vert \frac{\left\Vert
\left\vert x\right\vert u\right\Vert _{2}^{\frac{1}{2}}}{\left\Vert
\mathcal{R}u\right\Vert _{2}^{\frac{1}{2}}}\mathcal{R}u+\frac{\left\Vert
\mathcal{R}u\right\Vert _{2}^{\frac{1}{2}}}{\left\Vert \left\vert x\right\vert
u\right\Vert _{2}^{\frac{1}{2}}}\left\vert x\right\vert u\right\vert
^{2}\mathrm{dx}%
\end{align*}
and
\begin{align*}
&  \left(  {\int\limits_{\mathbb{R}^{N}}}\left\vert \nabla u\right\vert
^{2}\mathrm{dx}\right)  ^{\frac{1}{2}}\left(  {\int\limits_{\mathbb{R}^{N}}%
}\left\vert x\right\vert ^{2}\left\vert u\right\vert ^{2}\mathrm{dx}\right)
^{\frac{1}{2}}-\frac{N}{2}{\int\limits_{\mathbb{R}^{N}}}\left\vert
u\right\vert ^{2}\mathrm{dx}\\
&  =\frac{1}{2}{\int\limits_{\mathbb{R}^{N}}}\left\vert \frac{\left\Vert
\left\vert x\right\vert u\right\Vert _{2}^{\frac{1}{2}}}{\left\Vert \nabla
u\right\Vert _{2}^{\frac{1}{2}}}\nabla u+\frac{\left\Vert \nabla u\right\Vert
_{2}^{\frac{1}{2}}}{\left\Vert \left\vert x\right\vert u\right\Vert
_{2}^{\frac{1}{2}}}xu\right\vert ^{2}\mathrm{dx}\\
&  =\frac{\lambda^{2}}{2}{\int\limits_{\mathbb{R}^{N}}}\left\vert
\nabla\left(  ue^{\frac{\left\vert x\right\vert ^{2}}{2\lambda^{2}}}\right)
\right\vert ^{2}e^{-\frac{\left\vert x\right\vert ^{2}}{\lambda^{2}}%
}\mathrm{dx}\text{.}%
\end{align*}
Here $\lambda=\frac{\left(  {\int\limits_{\mathbb{R}^{N}}}\left\vert
x\right\vert ^{2}\left\vert u\right\vert ^{2}\mathrm{dx}\right)  ^{\frac{1}%
{4}}}{\left(  {\int\limits_{\mathbb{R}^{N}}}\left\vert \nabla u\right\vert
^{2}\mathrm{dx}\right)  ^{\frac{1}{4}}}$.
\end{corollary}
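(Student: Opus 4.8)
The plan is to obtain the two claimed identities as a direct specialization of the $L^2$-Caffarelli--Kohn--Nirenberg identities in Theorem \ref{T2}, followed by an elementary rewriting of the remainder term. First I would insert $A(r)\equiv 1$ and $B(r)=r$ into the definition of $C$: since $A(r)B(r)=r$ one gets
\[
C(r)=(AB)'(r)+(N-1)\frac{A(r)B(r)}{r}-B^2(r)=1+(N-1)-r^2=N-r^2,
\]
hence $C(r)+B^2(r)\equiv N$. With these choices $\int_{\mathbb{R}^N}A^2|\mathcal{R}u|^2\,\mathrm{dx}=\int_{\mathbb{R}^N}|\mathcal{R}u|^2\,\mathrm{dx}$, $\int_{\mathbb{R}^N}B^2|u|^2\,\mathrm{dx}=\int_{\mathbb{R}^N}|x|^2|u|^2\,\mathrm{dx}$, and $\tfrac12\int_{\mathbb{R}^N}[C+B^2]|u|^2\,\mathrm{dx}=\tfrac{N}{2}\int_{\mathbb{R}^N}|u|^2\,\mathrm{dx}$; substituting into the two identities of Theorem \ref{T2} and moving this last term to the left-hand side gives exactly the two displayed identities. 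Since here $H(r)=A(r)B(r)=r$ produces the smooth, compactly supported vector field $H(|x|)\tfrac{x}{|x|}|u|^2=x|u|^2$, the divergence-theorem computation underlying Lemma \ref{key} applies verbatim to all $u\in C_0^\infty(\mathbb{R}^N)$, so no puncture at the origin is needed (alternatively one extends from $C_0^\infty(\mathbb{R}^N\setminus\{0\})$ by a standard cut-off argument, all the weights being bounded near $0$).

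It remains to identify the remainder in the $\nabla u$ identity with the Gaussian expression. Put
\[
\lambda^2=\frac{\||x|u\|_2}{\|\nabla u\|_2}=\frac{\bigl(\int_{\mathbb{R}^N}|x|^2|u|^2\,\mathrm{dx}\bigr)^{1/2}}{\bigl(\int_{\mathbb{R}^N}|\nabla u|^2\,\mathrm{dx}\bigr)^{1/2}},
\]
so $\lambda$ coincides with the constant in the statement and the normalizing factors of Theorem \ref{T2} become $\||x|u\|_2^{1/2}/\|\nabla u\|_2^{1/2}=\lambda$ and $\|\nabla u\|_2^{1/2}/\||x|u\|_2^{1/2}=1/\lambda$. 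Hence the remainder equals
\[
\frac12\int_{\mathbb{R}^N}\Bigl|\lambda\nabla u+\tfrac{1}{\lambda}\,xu\Bigr|^2\mathrm{dx}=\frac{\lambda^2}{2}\int_{\mathbb{R}^N}\Bigl|\nabla u+\tfrac{1}{\lambda^2}\,xu\Bigr|^2\mathrm{dx}.
\]
By the product rule, $\nabla\bigl(ue^{|x|^2/(2\lambda^2)}\bigr)=e^{|x|^2/(2\lambda^2)}\bigl(\nabla u+\lambda^{-2}xu\bigr)$, so $|\nabla u+\lambda^{-2}xu|^2=e^{-|x|^2/\lambda^2}\bigl|\nabla\bigl(ue^{|x|^2/(2\lambda^2)}\bigr)\bigr|^2$; inserting this yields the claimed form $\tfrac{\lambda^2}{2}\int_{\mathbb{R}^N}\bigl|\nabla(ue^{|x|^2/(2\lambda^2)})\bigr|^2e^{-|x|^2/\lambda^2}\,\mathrm{dx}$. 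The same manipulation with $\mathcal{R}u$ (and the radial derivative in place of $\nabla$) handles the radial identity.

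I do not expect a genuine obstacle: the statement is a plug-in to Theorem \ref{T2}. The only points needing some care are matching the normalization exponents and the definition of $\lambda$, and justifying the identities for test functions that do not vanish near the origin; both are routine. As a by-product, discarding the nonnegative remainder integrals recovers the scale-invariant Heisenberg uncertainty inequalities $\bigl(\int_{\mathbb{R}^N}|\mathcal{R}u|^2\,\mathrm{dx}\bigr)^{1/2}\bigl(\int_{\mathbb{R}^N}|x|^2|u|^2\,\mathrm{dx}\bigr)^{1/2}\ge\tfrac{N}{2}\int_{\mathbb{R}^N}|u|^2\,\mathrm{dx}$ and its counterpart with $\nabla u$.
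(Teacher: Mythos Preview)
Your proposal is correct and follows exactly the route the paper intends: Corollary~\ref{c6} is a direct specialization of Theorem~\ref{T2} with $A=1$, $B=r$, so $C+B^2\equiv N$, and the Gaussian form of the remainder comes from the product-rule identity $\nabla\bigl(ue^{|x|^2/(2\lambda^2)}\bigr)=e^{|x|^2/(2\lambda^2)}\bigl(\nabla u+\lambda^{-2}xu\bigr)$ with $\lambda^2=\||x|u\|_2/\|\nabla u\|_2$. Your remark that the underlying divergence-theorem computation needs no puncture at the origin here (since $H(|x|)\tfrac{x}{|x|}=x$ is smooth) is a useful clarification that the paper leaves implicit.
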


\begin{corollary}
\label{c7}If $a+b\neq N-1$, choose $A=sign\left(  N-a-b-1\right)  r^{-b}$,
$B=r^{-a}$. Then $C=\left[  \left\vert N-1-a-b\right\vert r^{-a-b-1}%
-r^{-2a}\right]  $. From the Caffarelli-Kohn-Nirenberg identities, we have
\begin{align*}
&  \left(  {\int\limits_{\mathbb{R}^{N}}}\frac{1}{\left\vert x\right\vert
^{2b}}\left\vert \mathcal{R}u\right\vert ^{2}\mathrm{dx}\right)  ^{\frac{1}%
{2}}\left(  {\int\limits_{\mathbb{R}^{N}}}\frac{1}{\left\vert x\right\vert
^{2a}}\left\vert u\right\vert ^{2}\mathrm{dx}\right)  ^{\frac{1}{2}}\\
&  =\left\vert \frac{N-1-a-b}{2}\right\vert {\int\limits_{\mathbb{R}^{N}}%
}\frac{1}{\left\vert x\right\vert ^{a+b+1}}\left\vert u\right\vert
^{2}\mathrm{dx}\\
&  +\frac{1}{2}{\int\limits_{\mathbb{R}^{N}}}\left\vert sign\left(
N-a-b-1\right)  \frac{\left\Vert \frac{u}{\left\vert x\right\vert ^{a}%
}\right\Vert _{2}^{\frac{1}{2}}}{\left\Vert \frac{\mathcal{R}u}{\left\vert
x\right\vert ^{b}}\right\Vert _{2}^{\frac{1}{2}}}\frac{1}{\left\vert
x\right\vert ^{b}}\mathcal{R}u+\frac{\left\Vert \frac{\mathcal{R}u}{\left\vert
x\right\vert ^{b}}\right\Vert _{2}^{\frac{1}{2}}}{\left\Vert \frac
{u}{\left\vert x\right\vert ^{a}}\right\Vert _{2}^{\frac{1}{2}}}\frac
{1}{\left\vert x\right\vert ^{a}}u\right\vert ^{2}\mathrm{dx}%
\end{align*}
and
\begin{align*}
&  \left(  {\int\limits_{\mathbb{R}^{N}}}\frac{1}{\left\vert x\right\vert
^{2b}}\left\vert \nabla u\right\vert ^{2}\mathrm{dx}\right)  ^{\frac{1}{2}%
}\left(  {\int\limits_{\mathbb{R}^{N}}}\frac{1}{\left\vert x\right\vert ^{2a}%
}\left\vert u\right\vert ^{2}\mathrm{dx}\right)  ^{\frac{1}{2}}\\
&  =\left\vert \frac{N-1-a-b}{2}\right\vert {\int\limits_{\mathbb{R}^{N}}%
}\frac{1}{\left\vert x\right\vert ^{a+b+1}}\left\vert u\right\vert
^{2}\mathrm{dx}\\
&  +\frac{1}{2}{\int\limits_{\mathbb{R}^{N}}}\left\vert sign\left(
N-a-b-1\right)  \frac{\left\Vert \frac{u}{\left\vert x\right\vert ^{a}%
}\right\Vert _{2}^{\frac{1}{2}}}{\left\Vert \frac{\left\vert \nabla
u\right\vert }{\left\vert x\right\vert ^{b}}\right\Vert _{2}^{\frac{1}{2}}%
}\frac{1}{\left\vert x\right\vert ^{b}}\nabla u+\frac{\left\Vert
\frac{\left\vert \nabla u\right\vert }{\left\vert x\right\vert ^{b}%
}\right\Vert _{2}^{\frac{1}{2}}}{\left\Vert \frac{u}{\left\vert x\right\vert
^{a}}\right\Vert _{2}^{\frac{1}{2}}}\frac{1}{\left\vert x\right\vert ^{a}%
}u\frac{x}{\left\vert x\right\vert }\right\vert ^{2}\mathrm{dx}.
\end{align*}
These identities imply
\begin{align*}
\left(  {\int\limits_{\mathbb{R}^{N}}}\frac{1}{\left\vert x\right\vert ^{2b}%
}\left\vert \nabla u\right\vert ^{2}\mathrm{dx}\right)  \left(  {\int
\limits_{\mathbb{R}^{N}}}\frac{1}{\left\vert x\right\vert ^{2a}}\left\vert
u\right\vert ^{2}\mathrm{dx}\right)   &  \geq\left(  {\int\limits_{\mathbb{R}%
^{N}}}\frac{1}{\left\vert x\right\vert ^{2b}}\left\vert \mathcal{R}%
u\right\vert ^{2}\mathrm{dx}\right)  \left(  {\int\limits_{\mathbb{R}^{N}}%
}\frac{1}{\left\vert x\right\vert ^{2a}}\left\vert u\right\vert ^{2}%
\mathrm{dx}\right) \\
&  \geq\left\vert \frac{N-1-a-b}{2}\right\vert ^{2}\left(  {\int
\limits_{\mathbb{R}^{N}}}\frac{1}{\left\vert x\right\vert ^{a+b+1}}\left\vert
u\right\vert ^{2}\mathrm{dx}\right)  ^{2}.
\end{align*}

\end{corollary}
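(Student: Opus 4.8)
The plan is to read off Corollary~\ref{c7} directly from the $L^2$-Caffarelli--Kohn--Nirenberg identities of Theorem~\ref{T2}: the only work is bookkeeping with the pair $(A,B)$ and then discarding a nonnegative remainder. First I would check that $A(r)=\mathrm{sign}(N-a-b-1)\,r^{-b}$ and $B(r)=r^{-a}$ produce the advertised weight $C$. Since $A(r)B(r)=\mathrm{sign}(N-a-b-1)\,r^{-a-b}$,
\[
\bigl(A(r)B(r)\bigr)'+(N-1)\frac{A(r)B(r)}{r}=\mathrm{sign}(N-a-b-1)\,(N-1-a-b)\,r^{-a-b-1}=|N-1-a-b|\,r^{-a-b-1},
\]
so subtracting $B^2(r)=r^{-2a}$ gives $C(r)=|N-1-a-b|\,r^{-a-b-1}-r^{-2a}$, as claimed. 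The sign factor in $A$ is exactly what is needed: it makes this coefficient $+|N-1-a-b|$ rather than $N-1-a-b$, so that $C(r)+B^2(r)=|N-1-a-b|\,r^{-a-b-1}\ge0$; moreover $A^2(r)=r^{-2b}$, the sign disappearing on squaring.

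Next I would substitute $A^2$, $B^2$, $C+B^2$ into the two identities of Theorem~\ref{T2}. The potential integral is $\tfrac12\int_{B_R}[C(|x|)+B^2(|x|)]|u|^2\,\mathrm{dx}=\bigl|\tfrac{N-1-a-b}{2}\bigr|\int_{B_R}|u|^2|x|^{-(a+b+1)}\,\mathrm{dx}$, and inserting $A\mathcal{R}u=\mathrm{sign}(N-a-b-1)\,|x|^{-b}\mathcal{R}u$, $Bu=|x|^{-a}u$, together with $\|A\mathcal{R}u\|_2=\||x|^{-b}\mathcal{R}u\|_2$, $\|Bu\|_2=\||x|^{-a}u\|_2$ (and $\|A|\nabla u|\|_2$ in the gradient version), into the remainder integrals reproduces verbatim the two displayed identities. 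Here $R=\infty$, so $B_R=\mathbb{R}^N$; every weighted integral converges since $u\in C_0^\infty(\mathbb{R}^N\setminus\{0\})$ is compactly supported away from the origin, and the optimal-$\alpha$ normalization behind Theorem~\ref{T2} is legitimate because for $u\not\equiv0$ one has $\||x|^{-a}u\|_2>0$ and, the support being compact away from $0$, $\mathcal{R}u\not\equiv0$ (and likewise $\nabla u\not\equiv0$).

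Finally, for the displayed chain of inequalities I would drop the nonnegative remainder in the $\mathcal{R}u$-identity, obtaining
\[
\Bigl(\int_{\mathbb{R}^N}\frac{|\mathcal{R}u|^2}{|x|^{2b}}\,\mathrm{dx}\Bigr)^{\!1/2}\Bigl(\int_{\mathbb{R}^N}\frac{|u|^2}{|x|^{2a}}\,\mathrm{dx}\Bigr)^{\!1/2}\ \ge\ \Bigl|\tfrac{N-1-a-b}{2}\Bigr|\int_{\mathbb{R}^N}\frac{|u|^2}{|x|^{a+b+1}}\,\mathrm{dx}\ \ge\ 0,
\]
and, both sides being nonnegative, squaring gives the last inequality of the chain. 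The first inequality is the pointwise bound $|\mathcal{R}u(x)|=\bigl|\tfrac{x}{|x|}\cdot\nabla u(x)\bigr|\le|\nabla u(x)|$ (Cauchy--Schwarz): dividing by $|x|^{2b}$, integrating, and multiplying by $\int_{\mathbb{R}^N}|u|^2|x|^{-2a}\,\mathrm{dx}\ge0$ yields $\bigl(\int_{\mathbb{R}^N}\tfrac{|\nabla u|^2}{|x|^{2b}}\,\mathrm{dx}\bigr)\bigl(\int_{\mathbb{R}^N}\tfrac{|u|^2}{|x|^{2a}}\,\mathrm{dx}\bigr)\ge\bigl(\int_{\mathbb{R}^N}\tfrac{|\mathcal{R}u|^2}{|x|^{2b}}\,\mathrm{dx}\bigr)\bigl(\int_{\mathbb{R}^N}\tfrac{|u|^2}{|x|^{2a}}\,\mathrm{dx}\bigr)$. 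The argument is entirely routine once Theorem~\ref{T2} is in hand; the one place to be careful is the sign convention in $A$, without which $C+B^2$ could be negative and the resulting inequality vacuous, so I do not foresee any genuine obstacle.
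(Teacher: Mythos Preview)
Your proposal is correct and follows exactly the route intended by the paper: Corollary~\ref{c7} is stated there as an immediate specialization of Theorem~\ref{T2} with the given $A$, $B$ (no separate proof is written out), and your verification of $C$, the substitution, and the passage to the inequalities via dropping the nonnegative remainder and using $|\mathcal{R}u|\le|\nabla u|$ are precisely what that specialization entails. Your remark on the sign convention in $A$ is the only point worth flagging, and you have it right.
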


\begin{corollary}
[Caffarelli-Kohn-Nirenberg inequalities with Bessel pairs]\label{c8}%
\textit{Let }$0<R\leq\infty$\textit{, }$V$\textit{ and }$W$\textit{ be
positive }$C^{1}-$\textit{functions on }$\left(  0,R\right)  $. If $\left(
r^{N-1}V,r^{N-1}W\right)  $ is a Bessel pair on $\left(  0,R\right)  $, that
is there exists $\varphi>0$ on $\left(  0,R\right)  $ such that
\[
\left(  r^{N-1}V\varphi^{\prime}\right)  ^{\prime}+r^{N-1}W\varphi=0.
\]
Then as above, we choose $A=\sqrt{V}$, $B=-\frac{\varphi^{\prime}}{\varphi
}\sqrt{V}$, $C=W$. Then, we have from the Caffarelli-Kohn-Nirenberg identities
that
\begin{align*}
&  \left(  {\int\limits_{B_{R}}}V\left(  \left\vert x\right\vert \right)
\left\vert \mathcal{R}u\right\vert ^{2}\mathrm{dx}\right)  ^{\frac{1}{2}%
}\left(  {\int\limits_{B_{R}}}\left(  \frac{\varphi^{\prime}\left(  \left\vert
x\right\vert \right)  }{\varphi\left(  \left\vert x\right\vert \right)
}\right)  ^{2}V\left(  \left\vert x\right\vert \right)  \left\vert
u\right\vert ^{2}\mathrm{dx}\right)  ^{\frac{1}{2}}\\
&  =\frac{1}{2}{\int\limits_{B_{R}}}\left[  W\left(  \left\vert x\right\vert
\right)  +\left(  \frac{\varphi^{\prime}\left(  \left\vert x\right\vert
\right)  }{\varphi\left(  \left\vert x\right\vert \right)  }\right)
^{2}V\left(  \left\vert x\right\vert \right)  \right]  \left\vert u\right\vert
^{2}\mathrm{dx}\\
&  +\frac{1}{2}{\int\limits_{B_{R}}}\left\vert \frac{\left\Vert \frac
{\varphi^{\prime}}{\varphi}\sqrt{V}u\right\Vert _{2}^{\frac{1}{2}}}{\left\Vert
\sqrt{V}\mathcal{R}u\right\Vert _{2}^{\frac{1}{2}}}\sqrt{V\left(  \left\vert
x\right\vert \right)  }\mathcal{R}u-\frac{\left\Vert \sqrt{V}\mathcal{R}%
u\right\Vert _{2}^{\frac{1}{2}}}{\left\Vert \frac{\varphi^{\prime}}{\varphi
}\sqrt{V}u\right\Vert _{2}^{\frac{1}{2}}}\frac{\varphi^{\prime}\left(
\left\vert x\right\vert \right)  }{\varphi\left(  \left\vert x\right\vert
\right)  }\sqrt{V\left(  \left\vert x\right\vert \right)  }u\right\vert
^{2}\mathrm{dx}%
\end{align*}
and%
\begin{align*}
&  \left(  {\int\limits_{B_{R}}}V\left(  \left\vert x\right\vert \right)
\left\vert \nabla u\right\vert ^{2}\mathrm{dx}\right)  ^{\frac{1}{2}}\left(
{\int\limits_{B_{R}}}\left(  \frac{\varphi^{\prime}\left(  \left\vert
x\right\vert \right)  }{\varphi\left(  \left\vert x\right\vert \right)
}\right)  ^{2}V\left(  \left\vert x\right\vert \right)  \left\vert
u\right\vert ^{2}\mathrm{dx}\right)  ^{\frac{1}{2}}\\
&  =\frac{1}{2}{\int\limits_{B_{R}}}\left[  W\left(  \left\vert x\right\vert
\right)  +\left(  \frac{\varphi^{\prime}\left(  \left\vert x\right\vert
\right)  }{\varphi\left(  \left\vert x\right\vert \right)  }\right)
^{2}V\left(  \left\vert x\right\vert \right)  \right]  \left\vert u\right\vert
^{2}\mathrm{dx}\\
&  +\frac{1}{2}{\int\limits_{B_{R}}}\left\vert \frac{\left\Vert \frac
{\varphi^{\prime}}{\varphi}\sqrt{V}u\right\Vert _{2}^{\frac{1}{2}}}{\left\Vert
\sqrt{V}\left\vert \nabla u\right\vert \right\Vert _{2}^{\frac{1}{2}}}%
\sqrt{V\left(  \left\vert x\right\vert \right)  }\nabla u-\frac{\left\Vert
\sqrt{V}\left\vert \nabla u\right\vert \right\Vert _{2}^{\frac{1}{2}}%
}{\left\Vert \frac{\varphi^{\prime}}{\varphi}\sqrt{V}u\right\Vert _{2}%
^{\frac{1}{2}}}\frac{\varphi^{\prime}\left(  \left\vert x\right\vert \right)
}{\varphi\left(  \left\vert x\right\vert \right)  }\sqrt{V\left(  \left\vert
x\right\vert \right)  }u\frac{x}{\left\vert x\right\vert }\right\vert
^{2}\mathrm{dx}.
\end{align*}
Hence, we deduce the following family of Caffarelli-Kohn-Nirenberg
inequalities with Bessel pairs%
\begin{align*}
&  \left(  {\int\limits_{B_{R}}}V\left(  \left\vert x\right\vert \right)
\left\vert \nabla u\right\vert ^{2}\mathrm{dx}\right)  ^{\frac{1}{2}}\left(
{\int\limits_{B_{R}}}\left(  \frac{\varphi^{\prime}\left(  \left\vert
x\right\vert \right)  }{\varphi\left(  \left\vert x\right\vert \right)
}\right)  ^{2}V\left(  \left\vert x\right\vert \right)  \left\vert
u\right\vert ^{2}\mathrm{dx}\right)  ^{\frac{1}{2}}\\
&  \geq\left(  {\int\limits_{B_{R}}}V\left(  \left\vert x\right\vert \right)
\left\vert \mathcal{R}u\right\vert ^{2}\mathrm{dx}\right)  ^{\frac{1}{2}%
}\left(  {\int\limits_{B_{R}}}\left(  \frac{\varphi^{\prime}\left(  \left\vert
x\right\vert \right)  }{\varphi\left(  \left\vert x\right\vert \right)
}\right)  ^{2}V\left(  \left\vert x\right\vert \right)  \left\vert
u\right\vert ^{2}\mathrm{dx}\right)  ^{\frac{1}{2}}\\
&  \geq\frac{1}{2}{\int\limits_{B_{R}}}\left[  W\left(  \left\vert
x\right\vert \right)  +\left(  \frac{\varphi^{\prime}\left(  \left\vert
x\right\vert \right)  }{\varphi\left(  \left\vert x\right\vert \right)
}\right)  ^{2}V\left(  \left\vert x\right\vert \right)  \right]  \left\vert
u\right\vert ^{2}\mathrm{dx}.
\end{align*}

\end{corollary}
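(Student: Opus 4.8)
The plan is to specialize the $L^{2}$-Caffarelli--Kohn--Nirenberg identities of Theorem \ref{T2} to the pair $A=\sqrt{V}$, $B=-\frac{\varphi'}{\varphi}\sqrt{V}$. The only genuine computation is to check that this pair has associated potential $C=W$; once that is done, the two identities of the corollary come straight out of Theorem \ref{T2}, and the chain of inequalities follows from two elementary observations. (The standing hypotheses $V,W\in C^{1}(0,R)$, $V,W>0$, together with the Bessel relation, make $\varphi\in C^{2}(0,R)$, hence $A,B\in C^{1}(0,R)$, so Theorem \ref{T2} is applicable.)

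First I would compute $C$. With $A=\sqrt{V}$ and $B=-\frac{\varphi'}{\varphi}\sqrt{V}$ one has $AB=-\frac{\varphi'}{\varphi}V$ and $B^{2}=\bigl(\frac{\varphi'}{\varphi}\bigr)^{2}V$. Differentiating $AB$ and using $\bigl(\frac{\varphi'}{\varphi}\bigr)'=\frac{\varphi''}{\varphi}-\bigl(\frac{\varphi'}{\varphi}\bigr)^{2}$, the two occurrences of $\bigl(\frac{\varphi'}{\varphi}\bigr)^{2}V$ cancel against $-B^{2}$ and one is left with
\[
C=(AB)'+(N-1)\frac{AB}{r}-B^{2}=-\frac{1}{\varphi}\Bigl[(V\varphi')'+(N-1)\frac{V\varphi'}{r}\Bigr].
\]
On the other hand, expanding $(r^{N-1}V\varphi')'+r^{N-1}W\varphi=0$ and dividing by $r^{N-1}$ gives $(V\varphi')'+(N-1)\frac{V\varphi'}{r}=-W\varphi$, whence $C=W$ and $C+B^{2}=W+\bigl(\frac{\varphi'}{\varphi}\bigr)^{2}V$ — exactly the potential appearing on the right-hand sides of the claimed identities. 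Substituting $A,B,C$ into the two identities of Theorem \ref{T2}, and rewriting $\bigl(\int_{B_{R}}B^{2}|u|^{2}\bigr)^{1/2}=\bigl(\int_{B_{R}}\bigl(\frac{\varphi'}{\varphi}\bigr)^{2}V|u|^{2}\bigr)^{1/2}$ in the normalizing factors, reproduces verbatim the two displayed Caffarelli--Kohn--Nirenberg identities of the corollary. (If one prefers, the cross term $A\mathcal{R}u+Bu$ equals $\sqrt{V}\,\varphi\,\mathcal{R}(u/\varphi)$, and similarly in the full-gradient version modulo the radial direction, exactly as in Corollary \ref{c5}, but this rewriting is not needed here.)

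The inequalities then follow from two facts. First, in the radial-derivative identity the remainder integral is nonnegative, so discarding it gives
\[
\Bigl(\int_{B_{R}}V\,|\mathcal{R}u|^{2}\Bigr)^{\frac12}\Bigl(\int_{B_{R}}\bigl(\tfrac{\varphi'}{\varphi}\bigr)^{2}V\,|u|^{2}\Bigr)^{\frac12}\ \ge\ \frac12\int_{B_{R}}\Bigl[W+\bigl(\tfrac{\varphi'}{\varphi}\bigr)^{2}V\Bigr]|u|^{2}.
\]
Second, since $\mathcal{R}u=\frac{x}{|x|}\cdot\nabla u$ is the orthogonal projection of $\nabla u$ onto the unit radial direction, $|\mathcal{R}u(x)|\le|\nabla u(x)|$ pointwise, hence $\int_{B_{R}}V|\nabla u|^{2}\ge\int_{B_{R}}V|\mathcal{R}u|^{2}$; multiplying this by $\bigl(\int_{B_{R}}\bigl(\frac{\varphi'}{\varphi}\bigr)^{2}V|u|^{2}\bigr)^{1/2}$ and combining with the previous display yields the asserted chain of inequalities.

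The only point needing care — and the place where I expect any subtlety to be — is the degeneracy in Theorem \ref{T2}: its remainder terms carry the normalizing factors $\|\,A\mathcal{R}u\,\|_{2}^{1/2}$, $\|\,A|\nabla u|\,\|_{2}^{1/2}$, $\|Bu\|_{2}^{1/2}$, which presuppose the corresponding quantities are nonzero. For $0\not\equiv u\in C_{0}^{\infty}(B_{R}\setminus\{0\})$ one has $\mathcal{R}u\not\equiv0$ (a smooth function constant along rays with compact support in $B_{R}\setminus\{0\}$ vanishes), and $Bu\not\equiv0$ unless $\varphi'\equiv0$ on the support of $u$, which is incompatible with $W>0$ through the Bessel relation; so the identities hold as stated, while in the remaining trivial situations both sides degenerate and there is nothing to prove. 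Finally, the identities and inequalities extend from $C_{0}^{\infty}(B_{R}\setminus\{0\})$ to the natural weighted completion by density.
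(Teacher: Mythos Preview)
Your proposal is correct and follows exactly the paper's approach: the corollary is an immediate specialization of Theorem \ref{T2} with $A=\sqrt{V}$, $B=-\frac{\varphi'}{\varphi}\sqrt{V}$, and the only computation is the verification $C=W$ via the Bessel-pair ODE, which you carry out correctly (the same verification appears already in Corollary \ref{c5}). Your extra remarks on the degenerate cases and on $|\mathcal{R}u|\le|\nabla u|$ are accurate and slightly more careful than the paper, which leaves these points implicit.
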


\section{The stability of the Caffarelli-Kohn-Nirenberg inequalities}

In this section, we will use the Caffarelli-Kohn-Nirenberg identities derived
in the previous section and the Poincar\'{e} inequality to investigate the
stability of the Caffarelli-Kohn-Nirenberg inequalities. To illustriate our
approach, we will start with the stability of the Heisenberg Uncertainty
Principle, which is a special case of the Caffarelli-Kohn-Nirenberg inequalities.

\subsection{The stability of the Heisenberg Uncertainty Principle}

Recall that from the Corollary \ref{c1}, we have that for $u\in X:=W^{1,2}%
\left(  \mathbb{R}^{N}\right)  \cap\left\{  u:{\int\limits_{\mathbb{R}^{N}}%
}\left\vert x\right\vert ^{2}\left\vert u\right\vert ^{2}\mathrm{dx}%
<\infty\right\}  $,
\begin{align}
&  {\int\limits_{\mathbb{R}^{N}}}\left\vert \nabla u\right\vert ^{2}%
\mathrm{dx}+{\int\limits_{\mathbb{R}^{N}}}\left\vert x\right\vert
^{2}\left\vert u\right\vert ^{2}\mathrm{dx}-N{\int\limits_{\mathbb{R}^{N}}%
}\left\vert u\right\vert ^{2}\mathrm{dx}\nonumber\\
&  ={\int\limits_{\mathbb{R}^{N}}}\left\vert \nabla\left(  ue^{\frac{1}%
{2}\left\vert x\right\vert ^{2}}\right)  \right\vert ^{2}e^{-\left\vert
x\right\vert ^{2}}\mathrm{dx}. \label{HUP1}%
\end{align}
Our goal is to apply the Poincar\'{e} inequality to the right hand side of
(\ref{HUP1}). Therefore, we will need the following classical Poincar\'{e}
inequality for Gaussian measure (see \cite{BGL14, Bec89}, for instance):

\begin{lemma}
\label{l1}For all smooth function $v:$%
\[
{\int\limits_{\mathbb{R}^{N}}}\left\vert \nabla v\right\vert ^{2}\left(
2\pi\right)  ^{-\frac{N}{2}}e^{-\frac{1}{2}\left\vert x\right\vert ^{2}%
}\mathrm{dx}\geq{\int\limits_{\mathbb{R}^{N}}}\left\vert v-\left(
2\pi\right)  ^{-\frac{N}{2}}{\int\limits_{\mathbb{R}^{N}}v}e^{-\frac{1}%
{2}\left\vert x\right\vert ^{2}}\mathrm{dx}\right\vert ^{2}\left(
2\pi\right)  ^{-\frac{N}{2}}e^{-\frac{1}{2}\left\vert x\right\vert ^{2}%
}\mathrm{dx}\text{.}%
\]

\end{lemma}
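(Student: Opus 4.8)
The plan is to reduce to the mean--zero case and then diagonalize the Dirichlet form against the Hermite basis. Write $d\gamma(x)=(2\pi)^{-N/2}e^{-|x|^2/2}\,dx$ for the standard Gaussian probability measure on $\mathbb{R}^N$ and set $\bar v=\int_{\mathbb{R}^N}v\,d\gamma$. Replacing $v$ by $v-\bar v$ changes neither side of the asserted inequality, so it suffices to prove $\int_{\mathbb{R}^N}|\nabla v|^2\,d\gamma\ge\int_{\mathbb{R}^N}|v|^2\,d\gamma$ whenever $\int_{\mathbb{R}^N}v\,d\gamma=0$. If the left-hand side is infinite there is nothing to prove, so one may assume $v$ belongs to the weighted Sobolev space $H^1(\gamma)$; by a standard truncation-and-mollification argument (using that $C_0^\infty(\mathbb{R}^N)$, equivalently the polynomials, is dense in $H^1(\gamma)$) it is enough to treat $v$ in that space.

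Next I would expand $v$ in the $L^2(\gamma)$-orthonormal basis of multivariate Hermite polynomials $\{h_\alpha\}_{\alpha\in\mathbb{N}_0^N}$, where $h_\alpha(x)=\prod_{i=1}^N h_{\alpha_i}(x_i)$ and $h_k$ is the degree-$k$ normalized (probabilists') Hermite polynomial on $\mathbb{R}$. The two ingredients needed are: (i) $\{h_\alpha\}$ is a complete orthonormal system in $L^2(\gamma)$; and (ii) the differentiation rule $\partial_i h_\alpha=\sqrt{\alpha_i}\,h_{\alpha-e_i}$, read as $0$ when $\alpha_i=0$. Writing $v=\sum_\alpha c_\alpha h_\alpha$, the mean-zero hypothesis forces $c_0=0$, so $\int_{\mathbb{R}^N}|v|^2\,d\gamma=\sum_{\alpha\neq 0}c_\alpha^2$. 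Differentiating term by term, $\partial_i v=\sum_\alpha c_\alpha\sqrt{\alpha_i}\,h_{\alpha-e_i}$; collecting the coefficient of a fixed $h_\beta$ gives $\|\partial_i v\|_{L^2(\gamma)}^2=\sum_\alpha \alpha_i c_\alpha^2$, and summing over $i$ yields $\int_{\mathbb{R}^N}|\nabla v|^2\,d\gamma=\sum_\alpha |\alpha|\,c_\alpha^2$ with $|\alpha|=\alpha_1+\cdots+\alpha_N$. Since $|\alpha|\ge 1$ for every $\alpha\neq 0$,
\[
\int_{\mathbb{R}^N}|\nabla v|^2\,d\gamma=\sum_{\alpha\neq 0}|\alpha|\,c_\alpha^2\ge\sum_{\alpha\neq 0}c_\alpha^2=\int_{\mathbb{R}^N}|v|^2\,d\gamma,
\]
which is exactly the claim; moreover equality forces $c_\alpha=0$ for all $|\alpha|\ge 2$, i.e. $v$ is affine, so the constant $1$ is sharp and the near-extremals are transparent.

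The only genuinely delicate point is the justification of the spectral computation: one must know that the Hermite system is complete in $L^2(\gamma)$ and that, for $v\in H^1(\gamma)$, its Hermite expansion may be differentiated term by term in $L^2(\gamma)$ (equivalently, that $\partial_i$ carries the degree-$k$ Wiener chaos into the degree-$(k-1)$ chaos and is closed). Both facts are classical; see \cite{BGL14}. Alternatively one can bypass the expansion entirely by running the Ornstein--Uhlenbeck semigroup $P_t$: from the commutation identity $\nabla P_t=e^{-t}P_t\nabla$ (a direct consequence of Mehler's formula) together with the $L^2(\gamma)$-contractivity of $P_t$ one gets $\|\nabla P_t v\|_{L^2(\gamma)}\le e^{-t}\|\nabla v\|_{L^2(\gamma)}$, and then, since $P_t v\to\bar v$ in $L^2(\gamma)$ and $\tfrac{d}{dt}\|P_t v-\bar v\|_{L^2(\gamma)}^2=-2\|\nabla P_t v\|_{L^2(\gamma)}^2$,
\[
\int_{\mathbb{R}^N}|v-\bar v|^2\,d\gamma=2\int_0^\infty\|\nabla P_t v\|_{L^2(\gamma)}^2\,dt\le 2\int_0^\infty e^{-2t}\,dt\;\|\nabla v\|_{L^2(\gamma)}^2=\|\nabla v\|_{L^2(\gamma)}^2,
\]
which again gives the assertion with the sharp constant. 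I expect either route to be routine once the completeness/commutation facts are invoked; the Hermite route has the advantage of exhibiting the equality cases directly.
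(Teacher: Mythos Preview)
Your proof is correct. The paper itself does not prove this lemma: it simply records it as the classical Gaussian Poincar\'e inequality and refers to \cite{BGL14, Bec89}. Both of your routes---the Hermite expansion and the Ornstein--Uhlenbeck semigroup argument via the commutation $\nabla P_t=e^{-t}P_t\nabla$---are standard and valid, and they are precisely the kinds of proofs one finds in those references. The Hermite approach also identifies the equality cases (affine functions), which the paper later exploits implicitly when checking sharpness with the test function $u=x_1e^{-|x|^2/2}$, so your observation about extremals is on point even though the paper does not state it at the level of this lemma.
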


Obviously, the above classical Poincar\'{e} inequality for Gaussian measure
implies the following Poincar\'{e} inequality for Gaussian type measure

\begin{lemma}
Let $\lambda>0$. For all smooth function $v:$%
\begin{equation}
{\int\limits_{\mathbb{R}^{N}}}\left\vert \nabla v\right\vert ^{2}%
e^{-\frac{\left\vert x\right\vert ^{2}}{\lambda^{2}}}\mathrm{dx}\geq\frac
{2}{\lambda^{2}}\inf_{c}{\int\limits_{\mathbb{R}^{N}}}\left\vert
v-c\right\vert ^{2}e^{-\frac{\left\vert x\right\vert ^{2}}{\lambda^{2}}%
}\mathrm{dx}\text{.} \label{HUP2}%
\end{equation}

\end{lemma}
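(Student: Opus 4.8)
The plan is to derive \eqref{HUP2} from Lemma \ref{l1} by a simple change of variables that rescales the Gaussian measure. First I would apply Lemma \ref{l1} not to $v$ directly but to the dilated function $w(y) := v(\lambda y / \sqrt{2})$, or equivalently perform the substitution $x = \lambda y/\sqrt{2}$ in the integrals of \eqref{HUP2}. Under $x = \lambda y/\sqrt{2}$ we have $\mathrm{d}x = (\lambda/\sqrt{2})^N \mathrm{d}y$, the weight becomes $e^{-|x|^2/\lambda^2} = e^{-|y|^2/2}$, and by the chain rule $\nabla_x v(x) = (\sqrt{2}/\lambda)\nabla_y w(y)$, so $|\nabla_x v|^2 = (2/\lambda^2)|\nabla_y w|^2$. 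Thus
\[
{\int\limits_{\mathbb{R}^{N}}}\left\vert \nabla v\right\vert ^{2}e^{-\frac{\left\vert x\right\vert ^{2}}{\lambda^{2}}}\mathrm{dx}=\left(\frac{\lambda}{\sqrt{2}}\right)^{N}\frac{2}{\lambda^{2}}{\int\limits_{\mathbb{R}^{N}}}\left\vert \nabla w\right\vert ^{2}e^{-\frac{1}{2}\left\vert y\right\vert ^{2}}\mathrm{dy},
\]
and similarly ${\int_{\mathbb{R}^N}}|v-c|^2 e^{-|x|^2/\lambda^2}\mathrm{dx} = (\lambda/\sqrt{2})^N {\int_{\mathbb{R}^N}}|w-c|^2 e^{-|y|^2/2}\mathrm{dy}$.

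Next I would invoke Lemma \ref{l1}, rewriting its conclusion with the normalizing constant $(2\pi)^{-N/2}$ cancelled on both sides: for every smooth $w$,
\[
{\int\limits_{\mathbb{R}^{N}}}\left\vert \nabla w\right\vert ^{2}e^{-\frac{1}{2}\left\vert y\right\vert ^{2}}\mathrm{dy}\geq{\int\limits_{\mathbb{R}^{N}}}\left\vert w-\bar{w}\right\vert ^{2}e^{-\frac{1}{2}\left\vert y\right\vert ^{2}}\mathrm{dy}\geq\inf_{c\in\mathbb{R}}{\int\limits_{\mathbb{R}^{N}}}\left\vert w-c\right\vert ^{2}e^{-\frac{1}{2}\left\vert y\right\vert ^{2}}\mathrm{dy},
\]
where $\bar{w}$ is the Gaussian mean; the last inequality just uses that the mean is one admissible choice of constant (indeed it is the minimizer, but that refinement is not needed here). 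Substituting the two change-of-variables identities into this chain, the common factor $(\lambda/\sqrt{2})^N$ cancels and a factor $2/\lambda^2$ survives on the left, yielding exactly
\[
{\int\limits_{\mathbb{R}^{N}}}\left\vert \nabla v\right\vert ^{2}e^{-\frac{\left\vert x\right\vert ^{2}}{\lambda^{2}}}\mathrm{dx}\geq\frac{2}{\lambda^{2}}\inf_{c}{\int\limits_{\mathbb{R}^{N}}}\left\vert v-c\right\vert ^{2}e^{-\frac{\left\vert x\right\vert ^{2}}{\lambda^{2}}}\mathrm{dx}.
\]

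There is essentially no genuine obstacle here — this is the routine observation that the spectral gap of the Ornstein–Uhlenbeck operator scales quadratically under dilation, and the paper even flags it as "obvious." The only points requiring the mildest care are bookkeeping: tracking the Jacobian $(\lambda/\sqrt{2})^N$ so that it cancels cleanly, getting the factor $2/\lambda^2$ (rather than, say, $1/\lambda^2$ or $2/\lambda$) correct from $|\nabla_x v|^2 = (2/\lambda^2)|\nabla_y w|^2$, and replacing the explicit Gaussian-average term in Lemma \ref{l1} by the infimum over constants, which only weakens the statement and is legitimate. One should also note for completeness that the passage from smooth $v$ with finite weighted Dirichlet energy to all admissible $v$ (and the finiteness of both sides) is handled by standard density/approximation in the weighted space $L^2(e^{-|x|^2/\lambda^2}\mathrm{d}x)$, exactly as for the classical Gaussian Poincaré inequality, so no new argument is needed.
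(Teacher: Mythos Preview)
Your proposal is correct and is exactly what the paper has in mind: the paper does not write out a proof at all but simply states that the lemma follows ``obviously'' from Lemma~\ref{l1}, and the routine dilation $x=\lambda y/\sqrt{2}$ you carry out is the intended (and essentially only) way to make that passage explicit. Your bookkeeping of the Jacobian and the factor $2/\lambda^{2}$ is accurate.
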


Hence, by combining (\ref{HUP1}) and (\ref{HUP2}), we get
\begin{align*}
&  {\int\limits_{\mathbb{R}^{N}}}\left\vert \nabla u\right\vert ^{2}%
\mathrm{dx}+{\int\limits_{\mathbb{R}^{N}}}\left\vert x\right\vert
^{2}\left\vert u\right\vert ^{2}\mathrm{dx}-N{\int\limits_{\mathbb{R}^{N}}%
}\left\vert u\right\vert ^{2}\mathrm{dx}\\
&  ={\int\limits_{\mathbb{R}^{N}}}\left\vert \nabla\left(  ue^{\frac{1}%
{2}\left\vert x\right\vert ^{2}}\right)  \right\vert ^{2}e^{-\left\vert
x\right\vert ^{2}}\mathrm{dx}\\
&  \geq2\inf_{c}{\int\limits_{\mathbb{R}^{N}}}\left\vert ue^{\frac{1}%
{2}\left\vert x\right\vert ^{2}}-c\right\vert ^{2}e^{-\left\vert x\right\vert
^{2}}\mathrm{dx}\\
&  =2\inf_{c}{\int\limits_{\mathbb{R}^{N}}}\left\vert u-ce^{-\frac{1}%
{2}\left\vert x\right\vert ^{2}}\right\vert ^{2}\mathrm{dx}.
\end{align*}
Thus, we obtain the following stability version of the scale non-invariant
Heisenberg Uncertainty Principle

\begin{theorem}
\label{T3.1}For $u\in X$, then
\begin{align*}
&  {\int\limits_{\mathbb{R}^{N}}}\left\vert \nabla u\right\vert ^{2}%
\mathrm{dx}+{\int\limits_{\mathbb{R}^{N}}}\left\vert x\right\vert
^{2}\left\vert u\right\vert ^{2}\mathrm{dx}-N{\int\limits_{\mathbb{R}^{N}}%
}\left\vert u\right\vert ^{2}\mathrm{dx}\\
&  \geq2\inf_{c}{\int\limits_{\mathbb{R}^{N}}}\left\vert u-ce^{-\frac{1}%
{2}\left\vert x\right\vert ^{2}}\right\vert ^{2}\mathrm{dx}.
\end{align*}
The constant $2$ is sharp and can be achieved by nontrivial functions.
\end{theorem}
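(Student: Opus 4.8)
The plan is to combine the identity from Corollary \ref{c1} with the Gaussian Poincar\'e inequality and then verify that the constant $2$ is sharp by exhibiting explicit extremizers. The inequality itself has essentially already been derived in the computation immediately preceding Theorem \ref{T3.1}: starting from the identity
\[
{\int\limits_{\mathbb{R}^{N}}}\left\vert \nabla u\right\vert ^{2}\mathrm{dx}+{\int\limits_{\mathbb{R}^{N}}}\left\vert x\right\vert ^{2}\left\vert u\right\vert ^{2}\mathrm{dx}-N{\int\limits_{\mathbb{R}^{N}}}\left\vert u\right\vert ^{2}\mathrm{dx}={\int\limits_{\mathbb{R}^{N}}}\left\vert \nabla\left(  ue^{\frac{1}{2}\left\vert x\right\vert ^{2}}\right)  \right\vert ^{2}e^{-\left\vert x\right\vert ^{2}}\mathrm{dx},
\]
which is valid for $u\in X$ by a density argument extending Corollary \ref{c1} from $C_0^\infty(\mathbb{R}^N\setminus\{0\})$ to $X$, one applies \eqref{HUP2} with $\lambda^2=1$ and with $v=ue^{\frac12|x|^2}$, which gives the lower bound $2\inf_c\int|u-ce^{-\frac12|x|^2}|^2\mathrm{dx}$ after the change of variables absorbing the Gaussian weight. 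So the analytic content of the inequality is immediate; the only genuine work is sharpness and attainability.

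For sharpness and attainability, the key observation is that both the CKN/Hardy identity and the Gaussian Poincar\'e inequality \eqref{HUP2} are \emph{identities up to a single remainder term}, and the Poincar\'e inequality \eqref{HUP2} is itself an equality precisely on the first nontrivial Hermite eigenspace — i.e., when $v(x)=c_0+\sum_{j=1}^N c_j x_j$ (affine functions), since these are the eigenfunctions of the Ornstein–Uhlenbeck operator with eigenvalue $2/\lambda^2$. Therefore I would look for $u$ such that $ue^{\frac12|x|^2}$ is affine, i.e. $u(x)=\bigl(c_0+\sum_j c_jx_j\bigr)e^{-\frac12|x|^2}$. The simplest choice is $u(x)=x_1 e^{-\frac12|x|^2}$ (or any coordinate), for which $ue^{\frac12|x|^2}=x_1$ is a genuine eigenfunction; then \eqref{HUP2} is an equality and one should check that for this $u$ the infimum over $c$ on the right-hand side is attained at $c=0$ (which holds because $\int x_1 e^{-|x|^2}\mathrm{dx}=0$ by oddness, so the best constant approximation of the affine function $x_1$ in $L^2$ of the Gaussian is $0$). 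Plugging this $u$ into both sides and checking that all integrals $\int|\nabla u|^2$, $\int|x|^2|u|^2$, $\int|u|^2$ are finite (Gaussian moments) then shows the constant $2$ cannot be improved and is attained.

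The main obstacle — such as it is — is the density/approximation step needed to pass from $C_0^\infty(\mathbb{R}^N\setminus\{0\})$ (where Corollary \ref{c1} is stated) to the full space $X=W^{1,2}(\mathbb{R}^N)\cap\{u:\int|x|^2|u|^2\mathrm{dx}<\infty\}$; one must check that the identity \eqref{HUP1}, or at least the inequality direction ${\int}|\nabla u|^2\mathrm{dx}+{\int}|x|^2|u|^2\mathrm{dx}-N{\int}|u|^2\mathrm{dx}\ge{\int}|\nabla(ue^{\frac12|x|^2})|^2e^{-|x|^2}\mathrm{dx}$, survives the limit, and that removing the origin from the domain of test functions causes no loss in dimension $N\ge 2$ (the puncture has zero $W^{1,2}$ capacity). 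A secondary technical point is that the weighted space in which $v=ue^{\frac12|x|^2}$ lives must be matched correctly with the hypothesis of \eqref{HUP2}; since $v\in\dot W^{1,2}(e^{-|x|^2}\mathrm{dx})$ is exactly what finiteness of the right-hand side of \eqref{HUP1} says, this is consistent. Everything else is a routine computation of Gaussian integrals, so I would not belabor it.
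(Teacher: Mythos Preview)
Your proposal is correct and follows essentially the same approach as the paper: the inequality is obtained by combining the identity \eqref{HUP1} with the Gaussian Poincar\'e inequality \eqref{HUP2}, and sharpness/attainability is verified with the explicit extremizer $u(x)=x_1 e^{-\frac12|x|^2}$, for which the infimum over $c$ is at $c=0$ by oddness. Your discussion of the Hermite eigenspace rationale and the density step from $C_0^\infty(\mathbb{R}^N\setminus\{0\})$ to $X$ adds useful context that the paper leaves implicit.
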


\begin{proof}
Let $u=x_{1}e^{-\frac{1}{2}\left\vert x\right\vert ^{2}}$. Then by direct
computations, we get%
\[
{\int\limits_{\mathbb{R}^{N}}}\left\vert \nabla u\right\vert ^{2}%
\mathrm{dx}+{\int\limits_{\mathbb{R}^{N}}}\left\vert x\right\vert
^{2}\left\vert u\right\vert ^{2}\mathrm{dx}-N{\int\limits_{\mathbb{R}^{N}}%
}\left\vert u\right\vert ^{2}\mathrm{dx}=\pi^{\frac{N}{2}}%
\]
and
\begin{align*}
&  \inf_{c}{\int\limits_{\mathbb{R}^{N}}}\left\vert u-ce^{-\frac{1}%
{2}\left\vert x\right\vert ^{2}}\right\vert ^{2}\mathrm{dx}\\
&  =\inf_{c}\left(  \frac{\pi^{\frac{N}{2}}}{2}+c^{2}\pi^{\frac{N}{2}}\right)
\\
&  =\frac{\pi^{\frac{N}{2}}}{2}.
\end{align*}
Therefore,
\[
{\int\limits_{\mathbb{R}^{N}}}\left\vert \nabla u\right\vert ^{2}%
\mathrm{dx}+{\int\limits_{\mathbb{R}^{N}}}\left\vert x\right\vert
^{2}\left\vert u\right\vert ^{2}\mathrm{dx}-N{\int\limits_{\mathbb{R}^{N}}%
}\left\vert u\right\vert ^{2}\mathrm{dx}=2\inf_{c}{\int\limits_{\mathbb{R}%
^{N}}}\left\vert u-ce^{-\frac{1}{2}\left\vert x\right\vert ^{2}}\right\vert
^{2}\mathrm{dx}\text{.}%
\]

\end{proof}

Now, let $E=\left\{  \alpha e^{-\frac{\beta}{2}\left\vert x\right\vert ^{2}%
}\text{, }\alpha\in\mathbb{R}\text{, }\beta>0\right\}  $ be the manifold of
optimizers of the scale invariant Heisenberg Uncertainty Principle%
\[
\left(  {\int\limits_{\mathbb{R}^{N}}}\left\vert \nabla u\right\vert
^{2}\mathrm{dx}\right)  \left(  {\int\limits_{\mathbb{R}^{N}}}\left\vert
x\right\vert ^{2}\left\vert u\right\vert ^{2}\mathrm{dx}\right)  \geq
\frac{N^{2}}{4}\left(  {\int\limits_{\mathbb{R}^{N}}}\left\vert u\right\vert
^{2}\mathrm{dx}\right)  ^{2}\text{.}%
\]
It was showed in \cite{McV21} that $E$ forms a closed cone in $L^{2}\left(
\mathbb{R}^{N}\right)  $. In particular, for all $u\in X$, there exists a
$u^{\ast}\in E$ such that
\[
\inf_{v\in E}\left\Vert u-v\right\Vert _{2}=\left\Vert u-u^{\ast}\right\Vert
_{2}\text{.}%
\]

We also recall the Heisenberg deficit
\[
\delta_{1}\left(  u\right)  =\left(  {\int\limits_{\mathbb{R}^{N}}}\left\vert
\nabla u\right\vert ^{2}\mathrm{dx}\right)  ^{\frac{1}{2}}\left(
{\int\limits_{\mathbb{R}^{N}}}\left\vert x\right\vert ^{2}\left\vert
u\right\vert ^{2}\mathrm{dx}\right)  ^{\frac{1}{2}}-\frac{N}{2}{\int
\limits_{\mathbb{R}^{N}}}\left\vert u\right\vert ^{2}\mathrm{dx}.
\]

Then, by using the Corollary \ref{c6}, we get the quantitative stability of
the scale invariant Heisenberg Uncertainty Principle, namely, Theorem \ref{E2}:

\begin{theorem}
\label{T3.2}For all $u\in X:$%
\[
\delta_{1}\left(  u\right)  \geq\inf_{u^{\ast}\in E}\left\Vert u-u^{\ast
}\right\Vert _{2}^{2}.
\]
Moreover, the inequality is sharp and the equality can be attained by
nontrivial functions.
\end{theorem}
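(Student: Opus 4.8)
The plan is to derive Theorem \ref{T3.2} by inserting the scale-invariant Heisenberg identity of Corollary \ref{c6} into the Gaussian-type Poincar\'{e} inequality (\ref{HUP2}), and then relaxing the resulting one-parameter bound to a bound by the $L^{2}$-distance to the whole cone $E$. The case $u\equiv 0$ being trivial, I would fix a nontrivial $u\in X$; then $0<\int_{\mathbb{R}^{N}}|\nabla u|^{2}\,\mathrm{dx}<\infty$ and $0<\int_{\mathbb{R}^{N}}|x|^{2}|u|^{2}\,\mathrm{dx}<\infty$, so that
\[
\lambda:=\frac{\left(\int_{\mathbb{R}^{N}}|x|^{2}|u|^{2}\,\mathrm{dx}\right)^{1/4}}{\left(\int_{\mathbb{R}^{N}}|\nabla u|^{2}\,\mathrm{dx}\right)^{1/4}}\in(0,\infty)
\]
is well defined, and Corollary \ref{c6} (extended from $C_{0}^{\infty}$ to $X$ by density) gives the identity $\delta_{1}(u)=\tfrac{\lambda^{2}}{2}\int_{\mathbb{R}^{N}}\big|\nabla(ue^{|x|^{2}/(2\lambda^{2})})\big|^{2}e^{-|x|^{2}/\lambda^{2}}\,\mathrm{dx}$.

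Next I would apply the Poincar\'{e} inequality (\ref{HUP2}) with this very $\lambda$ to the function $v=ue^{|x|^{2}/(2\lambda^{2})}$. Using the pointwise identity $\big|ue^{|x|^{2}/(2\lambda^{2})}-c\big|^{2}e^{-|x|^{2}/\lambda^{2}}=\big|u-ce^{-|x|^{2}/(2\lambda^{2})}\big|^{2}$, this yields
\[
\delta_{1}(u)=\frac{\lambda^{2}}{2}\int_{\mathbb{R}^{N}}\big|\nabla v\big|^{2}e^{-|x|^{2}/\lambda^{2}}\,\mathrm{dx}\ \geq\ \inf_{c\in\mathbb{R}}\int_{\mathbb{R}^{N}}\big|u-ce^{-|x|^{2}/(2\lambda^{2})}\big|^{2}\,\mathrm{dx}.
\]
Since $ce^{-|x|^{2}/(2\lambda^{2})}=ce^{-\beta|x|^{2}/2}$ with $\beta=1/\lambda^{2}>0$ belongs to $E$, the right-hand side is at least $\inf_{v\in E}\|u-v\|_{2}^{2}$, which is exactly the asserted inequality.

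For the sharpness I would test with the nontrivial function $u(x)=x_{1}e^{-|x|^{2}/2}\in X$. Standard Gaussian integrals give $\int_{\mathbb{R}^{N}}|\nabla u|^{2}\,\mathrm{dx}=\int_{\mathbb{R}^{N}}|x|^{2}|u|^{2}\,\mathrm{dx}=\tfrac{N+2}{4}\pi^{N/2}$ and $\int_{\mathbb{R}^{N}}|u|^{2}\,\mathrm{dx}=\tfrac12\pi^{N/2}$, hence $\lambda=1$ and $\delta_{1}(u)=\tfrac{N+2}{4}\pi^{N/2}-\tfrac{N}{2}\cdot\tfrac12\pi^{N/2}=\tfrac12\pi^{N/2}$. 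On the other hand, for any $v=\alpha e^{-\beta|x|^{2}/2}\in E$ the cross term $\int_{\mathbb{R}^{N}}x_{1}e^{-(1+\beta)|x|^{2}/2}\,\mathrm{dx}$ vanishes by oddness, so $\|u-v\|_{2}^{2}=\int_{\mathbb{R}^{N}}x_{1}^{2}e^{-|x|^{2}}\,\mathrm{dx}+\alpha^{2}\int_{\mathbb{R}^{N}}e^{-\beta|x|^{2}}\,\mathrm{dx}\geq\tfrac12\pi^{N/2}$, with equality at $\alpha=0$. Therefore $\inf_{v\in E}\|u-v\|_{2}^{2}=\tfrac12\pi^{N/2}=\delta_{1}(u)$, so equality is attained by a nontrivial function.

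The main obstacle I anticipate is not conceptual but the routine justification that Corollary \ref{c6} and the Poincar\'{e} inequality (\ref{HUP2}) remain valid for $u$ in the class $X$ rather than only for $u\in C_{0}^{\infty}(\mathbb{R}^{N}\setminus\{0\})$: one needs to check that $ue^{|x|^{2}/(2\lambda^{2})}$ is an admissible test function for the Gaussian Poincar\'{e} inequality and that the identity passes to the limit under cut-off and mollification, which it does precisely because $\int|\nabla u|^{2}$ and $\int|x|^{2}|u|^{2}$ are finite. Once this approximation step is in place, everything else is bookkeeping with Gaussian integrals.
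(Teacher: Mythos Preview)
Your proposal is correct and follows essentially the same approach as the paper: apply the scale-invariant identity from Corollary~\ref{c6} with $\lambda=\bigl(\int|x|^{2}|u|^{2}\,\mathrm{dx}\bigr)^{1/4}\big/\bigl(\int|\nabla u|^{2}\,\mathrm{dx}\bigr)^{1/4}$, feed the result into the Gaussian Poincar\'{e} inequality~(\ref{HUP2}), relax the infimum over $c$ to one over all of $E$, and verify sharpness with $u=x_{1}e^{-|x|^{2}/2}$ using the oddness of the cross term. Your write-up is in fact slightly more explicit than the paper's, particularly in flagging the density/approximation issue needed to pass from $C_{0}^{\infty}(\mathbb{R}^{N}\setminus\{0\})$ to~$X$.
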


\begin{proof}
Let $u\in X\setminus\left\{  0\right\}  $. By Corollary \ref{c6}, we obtain
from the Poincar\'{e} inequality for Gaussian type measure (\ref{HUP2}) with
$\lambda=\left(  \frac{{\int\limits_{\mathbb{R}^{N}}}\left\vert x\right\vert
^{2}\left\vert u\right\vert ^{2}\mathrm{dx}}{{\int\limits_{\mathbb{R}^{N}}%
}\left\vert \nabla u\right\vert ^{2}\mathrm{dx}}\right)  ^{\frac{1}{4}}$ that%
\begin{align*}
&  \left(  {\int\limits_{\mathbb{R}^{N}}}\left\vert \nabla u\right\vert
^{2}\mathrm{dx}\right)  ^{\frac{1}{2}}\left(  {\int\limits_{\mathbb{R}^{N}}%
}\left\vert x\right\vert ^{2}\left\vert u\right\vert ^{2}\mathrm{dx}\right)
^{\frac{1}{2}}-\frac{N}{2}{\int\limits_{\mathbb{R}^{N}}}\left\vert
u\right\vert ^{2}\mathrm{dx}\\
&  =\frac{\lambda^{2}}{2}{\int\limits_{\mathbb{R}^{N}}}\left\vert
\nabla\left(  ue^{\frac{\left\vert x\right\vert ^{2}}{2\lambda^{2}}}\right)
\right\vert ^{2}e^{-\frac{\left\vert x\right\vert ^{2}}{\lambda^{2}}%
}\mathrm{dx}\\
&  \geq\inf_{c}{\int\limits_{\mathbb{R}^{N}}}\left\vert u-ce^{-\frac
{1}{2\lambda^{2}}\left\vert x\right\vert ^{2}}\right\vert ^{2}\mathrm{dx}\\
&  \geq\inf_{u^{\ast}\in E}\left\Vert u-u^{\ast}\right\Vert _{2}^{2}.
\end{align*}

Now, let $u=x_{1}e^{-\frac{1}{2}\left\vert x\right\vert ^{2}}$. Then by direct
computations:
\[
\nabla u=\left\langle 1-x_{1}^{2},-x_{1}x_{2},...,-x_{1}x_{N}\right\rangle
e^{-\frac{1}{2}\left\vert x\right\vert ^{2}},
\]
and%
\[
{\int\limits_{\mathbb{R}^{N}}}\left\vert \nabla u\right\vert ^{2}%
\mathrm{dx}={\int\limits_{\mathbb{R}^{N}}}\left\vert x\right\vert
^{2}\left\vert u\right\vert ^{2}\mathrm{dx}=\frac{N+2}{4}\pi^{\frac{N}{2}}.
\]
Therefore,%
\begin{align*}
&  \left(  {\int\limits_{\mathbb{R}^{N}}}\left\vert \nabla u\right\vert
^{2}\mathrm{dx}\right)  ^{\frac{1}{2}}\left(  {\int\limits_{\mathbb{R}^{N}}%
}\left\vert x\right\vert ^{2}\left\vert u\right\vert ^{2}\mathrm{dx}\right)
^{\frac{1}{2}}-\frac{N}{2}{\int\limits_{\mathbb{R}^{N}}}\left\vert
u\right\vert ^{2}\mathrm{dx}\\
&  =\frac{1}{2}\pi^{\frac{N}{2}}\text{.}%
\end{align*}
Also,
\[
{\int\limits_{\mathbb{R}^{N}}}\left\vert u-ce^{-\frac{1}{2\lambda^{2}%
}\left\vert x\right\vert ^{2}}\right\vert ^{2}\mathrm{dx}=\frac{1}{2}%
\pi^{\frac{N}{2}}+\left\vert c\right\vert ^{2}\lambda^{N}\pi^{\frac{N}{2}}.
\]
That is
\[
\inf_{u^{\ast}\in E}\left\Vert u-u^{\ast}\right\Vert _{2}^{2}=\frac{1}{2}%
\pi^{\frac{N}{2}}.
\]
Therefore,%
\[
\left(  {\int\limits_{\mathbb{R}^{N}}}\left\vert \nabla u\right\vert
^{2}\mathrm{dx}\right)  ^{\frac{1}{2}}\left(  {\int\limits_{\mathbb{R}^{N}}%
}\left\vert x\right\vert ^{2}\left\vert u\right\vert ^{2}\mathrm{dx}\right)
^{\frac{1}{2}}-\frac{N}{2}{\int\limits_{\mathbb{R}^{N}}}\left\vert
u\right\vert ^{2}\mathrm{dx}=\inf_{u^{\ast}\in E}\left\Vert u-u^{\ast
}\right\Vert _{2}^{2}.
\]

\end{proof}

A more careful analysis leads to the following version of the quantitative
stability of the scale invariant Heisenberg Uncertainty Principle, namely,
Theorem \ref{E2}:

\begin{theorem}
\label{T3.3}For all $u\in X:$%
\[
\delta_{1}\left(  u\right)  \geq\frac{1}{2}\inf_{u^{\ast}\in E}\left\{
{\int\limits_{\mathbb{R}^{N}}}\left\vert u-u^{\ast}\right\vert ^{2}%
\mathrm{dx}:{\int\limits_{\mathbb{R}^{N}}}\left\vert u\right\vert
^{2}\mathrm{dx}={\int\limits_{\mathbb{R}^{N}}}\left\vert u^{\ast}\right\vert
^{2}\mathrm{dx}\right\}  .
\]
Moreover, the inequality is sharp and the equality can be attained by
nontrivial functions.
\end{theorem}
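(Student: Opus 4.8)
The plan is to revisit the proof of Theorem~\ref{T3.2}: rather than immediately passing to the infimum over all of $E$, I would retain the explicit near-extremal Gaussian furnished by the Poincar\'e inequality and then compare it with its $L^{2}$-renormalization. The renormalization costs precisely the factor $\tfrac12$, and this loss turns out to be dictated by Cauchy--Schwarz, hence optimal.

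Fix $u\in X\setminus\{0\}$, put $\lambda=\bigl({\int\limits_{\mathbb{R}^{N}}}\left\vert x\right\vert ^{2}\left\vert u\right\vert ^{2}\mathrm{dx}\big/{\int\limits_{\mathbb{R}^{N}}}\left\vert \nabla u\right\vert ^{2}\mathrm{dx}\bigr)^{1/4}$ and $g(x)=e^{-\frac{1}{2\lambda^{2}}\left\vert x\right\vert ^{2}}\in E$. Exactly as in the proof of Theorem~\ref{T3.2} (apply Corollary~\ref{c6} with this $\lambda$ and then the Gaussian Poincar\'e inequality (\ref{HUP2}) to $v=ue^{\frac{1}{2\lambda^{2}}\left\vert x\right\vert ^{2}}$), one gets
\[
\delta_{1}(u)\geq\inf_{c\in\mathbb{R}}{\int\limits_{\mathbb{R}^{N}}}\left\vert u-cg\right\vert ^{2}\mathrm{dx}={\int\limits_{\mathbb{R}^{N}}}\left\vert u-c_{0}g\right\vert ^{2}\mathrm{dx},\qquad c_{0}:=\frac{{\int\limits_{\mathbb{R}^{N}}}ug\,\mathrm{dx}}{{\int\limits_{\mathbb{R}^{N}}}g^{2}\,\mathrm{dx}},
\]
the minimizing constant being the weighted mean appearing in (\ref{HUP2}). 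Setting $w:=u-c_{0}g$, the choice of $c_{0}$ gives ${\int\limits_{\mathbb{R}^{N}}}wg\,\mathrm{dx}=0$, hence ${\int\limits_{\mathbb{R}^{N}}}|u|^{2}\mathrm{dx}=c_{0}^{2}{\int\limits_{\mathbb{R}^{N}}}g^{2}\mathrm{dx}+{\int\limits_{\mathbb{R}^{N}}}|w|^{2}\mathrm{dx}$, and so far $\delta_{1}(u)\geq{\int\limits_{\mathbb{R}^{N}}}|w|^{2}\mathrm{dx}$.

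Next I would produce a competitor for the constrained infimum by rescaling $g$: take $u^{\ast}:=\tilde c\,g\in E$ with $\tilde c$ of the same sign as $c_{0}$ (either sign if $c_{0}=0$) and $|\tilde c|\,\|g\|_{2}=\|u\|_{2}$, so that $\|u^{\ast}\|_{2}=\|u\|_{2}$; in particular the infimum in the statement is over a nonempty set. Expanding the square and using ${\int\limits_{\mathbb{R}^{N}}}ug\,\mathrm{dx}=c_{0}\|g\|_{2}^{2}$, one finds ${\int\limits_{\mathbb{R}^{N}}}|u-u^{\ast}|^{2}\mathrm{dx}=2\|u\|_{2}^{2}-2\tilde c{\int\limits_{\mathbb{R}^{N}}}ug\,\mathrm{dx}=2\|u\|_{2}\bigl(\|u\|_{2}-|c_{0}|\,\|g\|_{2}\bigr)$. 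It then suffices to check ${\int\limits_{\mathbb{R}^{N}}}|w|^{2}\mathrm{dx}\geq\tfrac12{\int\limits_{\mathbb{R}^{N}}}|u-u^{\ast}|^{2}\mathrm{dx}$; writing $a:=|c_{0}|\|g\|_{2}\geq0$ and $b^{2}:={\int\limits_{\mathbb{R}^{N}}}|w|^{2}\mathrm{dx}$, so that $\|u\|_{2}^{2}=a^{2}+b^{2}$, this is $b^{2}\geq\sqrt{a^{2}+b^{2}}\,(\sqrt{a^{2}+b^{2}}-a)$, i.e. $a\sqrt{a^{2}+b^{2}}\geq a^{2}$, which holds. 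Chaining, $\delta_{1}(u)\geq{\int\limits_{\mathbb{R}^{N}}}|w|^{2}\mathrm{dx}\geq\tfrac12{\int\limits_{\mathbb{R}^{N}}}|u-u^{\ast}|^{2}\mathrm{dx}\geq\tfrac12\inf\{\cdots\}$. (Alternatively, one may quote Theorem~\ref{T3.2} directly together with the elementary fact $d_{1}(u,E)^{2}\geq\tfrac12\,d_{2}(u,E)^{2}$, the unconstrained and constrained distances being optimized at the same Gaussian profile.)

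For sharpness I would reuse $u=x_{1}e^{-\frac12|x|^{2}}$ from the proof of Theorem~\ref{T3.2}: there $\lambda=1$, $\delta_{1}(u)=\tfrac12\pi^{N/2}$ and $\|u\|_{2}^{2}=\tfrac12\pi^{N/2}$. Since $u$ is odd in $x_{1}$ while every $v\in E$ is even, ${\int\limits_{\mathbb{R}^{N}}}uv\,\mathrm{dx}=0$, so each admissible $v$ (i.e. $\|v\|_{2}=\|u\|_{2}$) satisfies $\|u-v\|_{2}^{2}=\|u\|_{2}^{2}+\|v\|_{2}^{2}=\pi^{N/2}$; hence the constrained infimum equals $\pi^{N/2}$ and equality holds for this nontrivial $u$. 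I do not anticipate any real obstacle: the argument rests only on Corollary~\ref{c6}, the Gaussian Poincar\'e inequality, and one-variable algebra. The only care points are the sign bookkeeping for $\tilde c$, the degenerate cases $c_{0}=0$ and $u\in E$, and recognizing that the factor $\tfrac12$ comes from Cauchy--Schwarz ($|c_{0}|\|g\|_{2}\leq\|u\|_{2}$), so it cannot be improved — which the test function confirms.
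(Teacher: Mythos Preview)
Your proof is correct and takes a cleaner route than the paper's. Both arguments start from the Poincar\'e-based bound $\delta_{1}(u)\geq\inf_{c}\|u-cg\|_{2}^{2}$ (the content of Theorem~\ref{T3.2}), but then diverge. The paper quotes Theorem~\ref{T3.2} as a black box, invokes the closedness of $E$ in $L^{2}$ (cited from \cite{McV21}) to produce a nearest point $v$, and then splits into the cases $\delta_{1}(u)<1$ and $\delta_{1}(u)\geq1$; the first case requires a further sub-case analysis on $\lambda=\|v\|_{2}^{-1}$ (namely $\lambda>2$ versus $0<\lambda\leq2$), while the second is handled by the parallelogram identity. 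Your argument instead retains the explicit Gaussian $g$ and the orthogonal decomposition $u=c_{0}g+w$ coming directly from the Poincar\'e step, rescales $g$ to match $\|u\|_{2}$, and reduces everything to the single elementary inequality $a\sqrt{a^{2}+b^{2}}\geq a^{2}$. This avoids both the appeal to closedness of $E$ and the two-tier case split, and it makes the origin of the factor $\tfrac12$ transparent (it is exactly the loss from Cauchy--Schwarz $|c_{0}|\,\|g\|_{2}\leq\|u\|_{2}$). The sharpness verification via $u=x_{1}e^{-\frac12|x|^{2}}$ is the same in both proofs.
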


\begin{proof}
WLOG, we can asssume that ${\int\limits_{\mathbb{R}^{N}}}\left\vert
u\right\vert ^{2}\mathrm{dx}=1$. Now, if
\[
\delta_{1}\left(  u\right)  =\left(  {\int\limits_{\mathbb{R}^{N}}}\left\vert
\nabla u\right\vert ^{2}\mathrm{dx}\right)  ^{1/2}\left(  {\int
\limits_{\mathbb{R}^{N}}}\left\vert x\right\vert ^{2}\left\vert u\right\vert
^{2}\mathrm{dx}\right)  ^{1/2}-\frac{N}{2}{\int\limits_{\mathbb{R}^{N}}%
}\left\vert u\right\vert ^{2}\mathrm{dx}<1,
\]

then by Theorem \ref{T3.2}, we have
\begin{align*}
&  \left(  {\int\limits_{\mathbb{R}^{N}}}\left\vert \nabla u\right\vert
^{2}\mathrm{dx}\right)  ^{1/2}\left(  {\int\limits_{\mathbb{R}^{N}}}\left\vert
x\right\vert ^{2}\left\vert u\right\vert ^{2}\mathrm{dx}\right)  ^{1/2}%
-\frac{N}{2}{\int\limits_{\mathbb{R}^{N}}}\left\vert u\right\vert
^{2}\mathrm{dx}\\
&  \geq\inf_{z\in E}{\int\limits_{\mathbb{R}^{N}}}\left\vert u-z\right\vert
^{2}\mathrm{dx}.
\end{align*}
Moreover, it was showed in \cite{McV21} that $E$ is a closed cone (in the
$L^{2}$-norm). Therefore, we can find $v\in E$ such that $\inf_{z\in E}%
{\int\limits_{\mathbb{R}^{N}}}\left\vert u-z\right\vert ^{2}\mathrm{dx}%
={\int\limits_{\mathbb{R}^{N}}}\left\vert u-v\right\vert ^{2}\mathrm{dx}$.
Thus,%
\[
{\int\limits_{\mathbb{R}^{N}}}\left\vert u-v\right\vert ^{2}\mathrm{dx}%
\leq\delta_{1}\left(  u\right)  <1.
\]
Therefore, since ${\int\limits_{\mathbb{R}^{N}}}\left\vert u\right\vert
^{2}\mathrm{dx}=1$, we deduce that ${\int\limits_{\mathbb{R}^{N}}}\left\vert
v\right\vert ^{2}\mathrm{dx}\neq0$. Let $\lambda=\left(  {\int
\limits_{\mathbb{R}^{N}}}\left\vert v\right\vert ^{2}\mathrm{dx}\right)
^{-\frac{1}{2}}>0$. Then $w=\lambda v\in E$ and ${\int\limits_{\mathbb{R}^{N}%
}}\left\vert w\right\vert ^{2}\mathrm{dx}=1$. Also,%
\begin{align*}
{\int\limits_{\mathbb{R}^{N}}}\left\vert u-w\right\vert ^{2}\mathrm{dx}  &
={\int\limits_{\mathbb{R}^{N}}}\left\vert u-\lambda v\right\vert
^{2}\mathrm{dx}\\
&  =2-2\lambda{\int\limits_{\mathbb{R}^{N}}uv}\mathrm{dx}\text{.}%
\end{align*}
Note that since ${\int\limits_{\mathbb{R}^{N}}}\left\vert u-v\right\vert
^{2}\mathrm{dx}\leq\delta_{1}\left(  u\right)  <1$, we obtain%
\[
1-2{\int\limits_{\mathbb{R}^{N}}uv}\mathrm{dx}+\frac{1}{\lambda^{2}}\leq
\delta_{1}\left(  u\right)  <1
\]
and
\[
0<\frac{1}{2\lambda^{2}}\leq{\int\limits_{\mathbb{R}^{N}}uv}\mathrm{dx}.
\]
Now, to show that
\[
\delta_{1}\left(  u\right)  \geq\frac{1}{2}\inf_{u^{\ast}\in E}\left\{
{\int\limits_{\mathbb{R}^{N}}}\left\vert u-u^{\ast}\right\vert ^{2}%
\mathrm{dx}:{\int\limits_{\mathbb{R}^{N}}}\left\vert u\right\vert
^{2}\mathrm{dx}={\int\limits_{\mathbb{R}^{N}}}\left\vert u^{\ast}\right\vert
^{2}\mathrm{dx}\right\}  ,
\]
it's enough to prove that
\begin{align*}
\delta_{1}\left(  u\right)   &  \geq1-2{\int\limits_{\mathbb{R}^{N}}%
uv}\mathrm{dx}+\frac{1}{\lambda^{2}}\\
&  \geq1-\lambda{\int\limits_{\mathbb{R}^{N}}uv}\mathrm{dx}\\
&  =\frac{1}{2}{\int\limits_{\mathbb{R}^{N}}}\left\vert u-w\right\vert
^{2}\mathrm{dx}.
\end{align*}
That is%
\[
\left(  2-\lambda\right)  {\int\limits_{\mathbb{R}^{N}}uv}\mathrm{dx}\leq
\frac{1}{\lambda^{2}}.
\]
Indeed, note that by the H\"{o}lder inequality:
\[
\lambda{\int\limits_{\mathbb{R}^{N}}uv}\mathrm{dx}={\int\limits_{\mathbb{R}%
^{N}}uw}\mathrm{dx}\leq1\text{,}%
\]
we have
\[
\frac{1}{2\lambda^{2}}\leq{\int\limits_{\mathbb{R}^{N}}uv}\mathrm{dx}\leq
\frac{1}{\lambda}\text{.}%
\]
If $\lambda>2$, then $\left(  2-\lambda\right)  {\int\limits_{\mathbb{R}^{N}%
}uv}\mathrm{dx}<0<\frac{1}{\lambda^{2}}$. That is $1-\lambda{\int
\limits_{\mathbb{R}^{N}}uv}\mathrm{dx}\leq1-2{\int\limits_{\mathbb{R}^{N}}%
uv}\mathrm{dx}+\frac{1}{\lambda^{2}}\leq\delta_{1}\left(  u\right)  $. If
$0<\lambda\leq2$, then%
\begin{align*}
\left(  2-\lambda\right)  {\int\limits_{\mathbb{R}^{N}}uv}\mathrm{dx}  &
\leq\left(  2-\lambda\right)  \frac{1}{\lambda}\\
&  =\frac{2}{\lambda}-1\\
&  \leq\frac{1}{\lambda^{2}}.
\end{align*}

If
\[
\delta_{1}\left(  u\right)  =\left(  {\int\limits_{\mathbb{R}^{N}}}\left\vert
\nabla u\right\vert ^{2}\mathrm{dx}\right)  ^{1/2}\left(  {\int
\limits_{\mathbb{R}^{N}}}\left\vert x\right\vert ^{2}\left\vert u\right\vert
^{2}\mathrm{dx}\right)  ^{1/2}-\frac{N}{2}{\int\limits_{\mathbb{R}^{N}}%
}\left\vert u\right\vert ^{2}\mathrm{dx}\geq1\text{,}%
\]
then since ${\int\limits_{\mathbb{R}^{N}}}\left\vert u-u^{\ast}\right\vert
^{2}\mathrm{dx}+{\int\limits_{\mathbb{R}^{N}}}\left\vert u+u^{\ast}\right\vert
^{2}\mathrm{dx}=2\left(  {\int\limits_{\mathbb{R}^{N}}}\left\vert u\right\vert
^{2}\mathrm{dx}+{\int\limits_{\mathbb{R}^{N}}}\left\vert u^{\ast}\right\vert
^{2}\mathrm{dx}\right)  $, we get%
\begin{align*}
&  \inf_{u^{\ast}\in E}\left\{  {\int\limits_{\mathbb{R}^{N}}}\left\vert
u-u^{\ast}\right\vert ^{2}\mathrm{dx}:{\int\limits_{\mathbb{R}^{N}}}\left\vert
u\right\vert ^{2}\mathrm{dx}={\int\limits_{\mathbb{R}^{N}}}\left\vert u^{\ast
}\right\vert ^{2}\mathrm{dx}=1\right\}  \\
&  \leq\frac{1}{2}\inf_{u^{\ast}\in E}\left\{  {\int\limits_{\mathbb{R}^{N}}%
}\left\vert u-u^{\ast}\right\vert ^{2}\mathrm{dx}+{\int\limits_{\mathbb{R}%
^{N}}}\left\vert u+u^{\ast}\right\vert ^{2}\mathrm{dx}:{\int
\limits_{\mathbb{R}^{N}}}\left\vert u\right\vert ^{2}\mathrm{dx}%
={\int\limits_{\mathbb{R}^{N}}}\left\vert u^{\ast}\right\vert ^{2}%
\mathrm{dx}=1\right\}  \\
&  =2\text{.}%
\end{align*}
Therefore%
\[
\delta_{1}\left(  u\right)  \geq\frac{1}{2}\inf_{u^{\ast}\in E}\left\{
{\int\limits_{\mathbb{R}^{N}}}\left\vert u-u^{\ast}\right\vert ^{2}%
\mathrm{dx}:{\int\limits_{\mathbb{R}^{N}}}\left\vert u\right\vert
^{2}\mathrm{dx}={\int\limits_{\mathbb{R}^{N}}}\left\vert u^{\ast}\right\vert
^{2}\mathrm{dx}\right\}  .
\]
Now, let $u=x_{1}e^{-\frac{1}{2}\left\vert x\right\vert ^{2}}$. Then as in the
proof of Theorem \ref{T3.2}, we get
\begin{align*}
{\int\limits_{\mathbb{R}^{N}}}\left\vert \nabla u\right\vert ^{2}\mathrm{dx}
&  ={\int\limits_{\mathbb{R}^{N}}}\left\vert x\right\vert ^{2}\left\vert
u\right\vert ^{2}\mathrm{dx}\\
&  =\frac{N+2}{4}\pi^{\frac{N}{2}}%
\end{align*}
and%
\begin{align*}
&  \left(  {\int\limits_{\mathbb{R}^{N}}}\left\vert \nabla u\right\vert
^{2}\mathrm{dx}\right)  ^{\frac{1}{2}}\left(  {\int\limits_{\mathbb{R}^{N}}%
}\left\vert x\right\vert ^{2}\left\vert u\right\vert ^{2}\mathrm{dx}\right)
^{\frac{1}{2}}-\frac{N}{2}{\int\limits_{\mathbb{R}^{N}}}\left\vert
u\right\vert ^{2}\mathrm{dx}\\
&  =\frac{1}{2}\pi^{\frac{N}{2}}\text{.}%
\end{align*}
Also,
\[
{\int\limits_{\mathbb{R}^{N}}}\left\vert u-ce^{-\frac{1}{2\lambda^{2}%
}\left\vert x\right\vert ^{2}}\right\vert ^{2}\mathrm{dx}=\frac{1}{2}%
\pi^{\frac{N}{2}}+\left\vert c\right\vert ^{2}\lambda^{N}\pi^{\frac{N}{2}}.
\]
Note that
\[
{\int\limits_{\mathbb{R}^{N}}}\left\vert ce^{-\frac{1}{2\lambda^{2}}\left\vert
x\right\vert ^{2}}\right\vert ^{2}\mathrm{dx}=\left\vert c\right\vert
^{2}\lambda^{N}\pi^{\frac{N}{2}}\text{.}%
\]
Hence
\begin{align*}
&  \inf_{u^{\ast}\in E}\left\{  {\int\limits_{\mathbb{R}^{N}}}\left\vert
u-u^{\ast}\right\vert ^{2}\mathrm{dx}:{\int\limits_{\mathbb{R}^{N}}}\left\vert
u\right\vert ^{2}\mathrm{dx}={\int\limits_{\mathbb{R}^{N}}}\left\vert u^{\ast
}\right\vert ^{2}\mathrm{dx}\right\}  \\
&  =\inf_{c,\lambda}\left\{  \frac{1}{2}\pi^{\frac{N}{2}}+\left\vert
c\right\vert ^{2}\lambda^{N}\pi^{\frac{N}{2}}:\frac{1}{2}\pi^{\frac{N}{2}%
}=\left\vert c\right\vert ^{2}\lambda^{N}\pi^{\frac{N}{2}}\right\}  \\
&  =\pi^{\frac{N}{2}}.
\end{align*}
Therefore,%
\begin{align*}
&  \left(  {\int\limits_{\mathbb{R}^{N}}}\left\vert \nabla u\right\vert
^{2}\mathrm{dx}\right)  ^{\frac{1}{2}}\left(  {\int\limits_{\mathbb{R}^{N}}%
}\left\vert x\right\vert ^{2}\left\vert u\right\vert ^{2}\mathrm{dx}\right)
^{\frac{1}{2}}-\frac{N}{2}{\int\limits_{\mathbb{R}^{N}}}\left\vert
u\right\vert ^{2}\mathrm{dx}\\
&  =\frac{1}{2}\inf_{u^{\ast}\in E}\left\{  {\int\limits_{\mathbb{R}^{N}}%
}\left\vert u-u^{\ast}\right\vert ^{2}\mathrm{dx}:{\int\limits_{\mathbb{R}%
^{N}}}\left\vert u\right\vert ^{2}\mathrm{dx}={\int\limits_{\mathbb{R}^{N}}%
}\left\vert u^{\ast}\right\vert ^{2}\mathrm{dx}\right\}  .
\end{align*}

\end{proof}

Next, we note that it was showed by McCurdy and Venkatraman in \cite{McV21}
that there is no quantitative stability version of the scale invariant
Heisenberg Uncertainty Principle if we use the norm $\left\Vert \nabla\left(
u-u^{\ast}\right)  \right\Vert _{2}$ or $\left\Vert x\left(  u-u^{\ast
}\right)  \right\Vert _{2}$ on the right hand side. In particular, there is no
$C>0$ such that
\begin{align*}
&  \left(  {\int\limits_{\mathbb{R}^{N}}}\left\vert \nabla u\right\vert
^{2}\mathrm{dx}\right)  ^{1/2}\left(  {\int\limits_{\mathbb{R}^{N}}}\left\vert
x\right\vert ^{2}\left\vert u\right\vert ^{2}\mathrm{dx}\right)  ^{1/2}%
-\frac{N}{2}{\int\limits_{\mathbb{R}^{N}}}\left\vert u\right\vert
^{2}\mathrm{dx}\\
&  \geq C\inf_{u^{\ast}\in E}{\int\limits_{\mathbb{R}^{N}}}\left\vert
\nabla\left(  u-u^{\ast}\right)  \right\vert ^{2}\mathrm{dx}+{\int
\limits_{\mathbb{R}^{N}}}\left\vert x\right\vert ^{2}\left\vert u-u^{\ast
}\right\vert ^{2}\mathrm{dx}+{\int\limits_{\mathbb{R}^{N}}}\left\vert
u-u^{\ast}\right\vert ^{2}\mathrm{dx}.
\end{align*}
We now show that this is not the case for the scale non-invariant Heisenberg
Uncertainty Principle, namely, Theorem \ref{E4}:

\begin{theorem}
\label{T3.4}For all $u\in X$, then%
\begin{align*}
&  {\int\limits_{\mathbb{R}^{N}}}\left\vert \nabla u\right\vert ^{2}%
\mathrm{dx}+{\int\limits_{\mathbb{R}^{N}}}\left\vert x\right\vert
^{2}\left\vert u\right\vert ^{2}\mathrm{dx}-N{\int\limits_{\mathbb{R}^{N}}%
}\left\vert u\right\vert ^{2}\mathrm{dx}\\
&  ={\int\limits_{\mathbb{R}^{N}}}\left\vert \nabla\left(  ue^{\frac{1}%
{2}\left\vert x\right\vert ^{2}}\right)  \right\vert ^{2}e^{-\left\vert
x\right\vert ^{2}}\mathrm{dx}\\
&  \geq\frac{2}{N+3}\inf_{c\in\mathbb{R}}\left(  {\int\limits_{\mathbb{R}^{N}%
}}\left\vert \nabla\left(  u-ce^{-\frac{1}{2}\left\vert x\right\vert ^{2}%
}\right)  \right\vert ^{2}\mathrm{dx}+{\int\limits_{\mathbb{R}^{N}}}\left\vert
x\right\vert ^{2}\left\vert u-ce^{-\frac{1}{2}\left\vert x\right\vert ^{2}%
}\right\vert ^{2}\mathrm{dx}+{\int\limits_{\mathbb{R}^{N}}}\left\vert
u-ce^{-\frac{1}{2}\left\vert x\right\vert ^{2}}\right\vert ^{2}\mathrm{dx}%
\right)  .
\end{align*}
The inequality is sharp and the equality can be attained by nontrivial functions.
\end{theorem}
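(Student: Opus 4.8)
The plan is to reduce the inequality to the classical Gaussian Poincar\'e inequality \eqref{HUP2}, following the same strategy as for Theorems \ref{T3.1}--\ref{T3.3}, but applying the basic identity \eqref{HUP1} to the difference $u-ce^{-\frac{1}{2}|x|^{2}}$ instead of to $u$. Write $v:=ue^{\frac{1}{2}|x|^{2}}$ and, for $c\in\mathbb{R}$, $w_{c}:=u-ce^{-\frac{1}{2}|x|^{2}}$; note $w_{c}\in X$ since $e^{-\frac{1}{2}|x|^{2}}\in X$. The equality asserted in the statement is precisely the identity \eqref{HUP1} of Corollary \ref{c1}, whose right-hand side is $\int_{\mathbb{R}^{N}}|\nabla v|^{2}e^{-|x|^{2}}\,\mathrm{dx}$, so the task is to bound this Gaussian Dirichlet energy of $v$ from below by $\frac{2}{N+3}$ times the infimum over $c$ appearing on the right-hand side of the statement.

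The key step is to apply \eqref{HUP1} (valid on all of $X$ by the same density argument used throughout this section) with $w_{c}$ in place of $u$. Since $w_{c}e^{\frac{1}{2}|x|^{2}}=v-c$, the right-hand side of \eqref{HUP1} for $w_{c}$ equals $\int_{\mathbb{R}^{N}}|\nabla(v-c)|^{2}e^{-|x|^{2}}\,\mathrm{dx}=\int_{\mathbb{R}^{N}}|\nabla v|^{2}e^{-|x|^{2}}\,\mathrm{dx}$, which does not depend on $c$. Rearranging and using $|w_{c}|^{2}=|v-c|^{2}e^{-|x|^{2}}$ yields, for every $c\in\mathbb{R}$,
\[
\int_{\mathbb{R}^{N}}|\nabla w_{c}|^{2}\,\mathrm{dx}+\int_{\mathbb{R}^{N}}|x|^{2}|w_{c}|^{2}\,\mathrm{dx}+\int_{\mathbb{R}^{N}}|w_{c}|^{2}\,\mathrm{dx}=\int_{\mathbb{R}^{N}}|\nabla v|^{2}e^{-|x|^{2}}\,\mathrm{dx}+(N+1)\int_{\mathbb{R}^{N}}|v-c|^{2}e^{-|x|^{2}}\,\mathrm{dx}.
\]
Only the last term depends on $c$, so after taking the infimum and clearing denominators the claimed inequality is equivalent to $\int_{\mathbb{R}^{N}}|\nabla v|^{2}e^{-|x|^{2}}\,\mathrm{dx}\geq 2\inf_{c}\int_{\mathbb{R}^{N}}|v-c|^{2}e^{-|x|^{2}}\,\mathrm{dx}$, which is exactly \eqref{HUP2} with $\lambda=1$. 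This reduction also explains why the constant is forced to be $\frac{2}{N+3}$.

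For sharpness I would test $u=x_{1}e^{-\frac{1}{2}|x|^{2}}$, i.e.\ $v=x_{1}$, which is a linear function and hence an extremal for the Gaussian Poincar\'e inequality. Using the computations from the proof of Theorem \ref{T3.1}, $\int_{\mathbb{R}^{N}}|\nabla v|^{2}e^{-|x|^{2}}\,\mathrm{dx}=\pi^{N/2}$ and $\inf_{c}\int_{\mathbb{R}^{N}}|x_{1}-c|^{2}e^{-|x|^{2}}\,\mathrm{dx}=\tfrac{1}{2}\pi^{N/2}$, attained at $c=0$; hence the infimum on the right-hand side equals $\pi^{N/2}+(N+1)\tfrac{1}{2}\pi^{N/2}=\tfrac{N+3}{2}\pi^{N/2}$ and $\tfrac{2}{N+3}\cdot\tfrac{N+3}{2}\pi^{N/2}=\pi^{N/2}$, matching the left-hand side. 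Thus equality holds for this nontrivial $u$ and the constant $\tfrac{2}{N+3}$ is optimal.

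The only point needing a little care is the justification that \eqref{HUP1} may be applied to $w_{c}$, which lies in $X$ rather than a priori in $C_{0}^{\infty}(\mathbb{R}^{N}\setminus\{0\})$; but this is the routine density/truncation argument already invoked for Theorems \ref{T3.1}--\ref{T3.3}, together with $e^{-\frac{1}{2}|x|^{2}}\in X$. Beyond that, the proof is just the algebraic identity above plus one application of the classical Poincar\'e inequality for the Gaussian measure, so I do not expect any serious obstacle.
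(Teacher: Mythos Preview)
Your proof is correct and follows essentially the same approach as the paper: both derive the key identity
\[
\int_{\mathbb{R}^{N}}|\nabla w_{c}|^{2}\,\mathrm{dx}+\int_{\mathbb{R}^{N}}|x|^{2}|w_{c}|^{2}\,\mathrm{dx}+\int_{\mathbb{R}^{N}}|w_{c}|^{2}\,\mathrm{dx}=\int_{\mathbb{R}^{N}}|\nabla v|^{2}e^{-|x|^{2}}\,\mathrm{dx}+(N+1)\int_{\mathbb{R}^{N}}|v-c|^{2}e^{-|x|^{2}}\,\mathrm{dx},
\]
then apply the Gaussian Poincar\'e inequality \eqref{HUP2} and test sharpness with $u=x_{1}e^{-\frac{1}{2}|x|^{2}}$. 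The only cosmetic difference is that you obtain this identity by reusing \eqref{HUP1} applied to $w_{c}$, whereas the paper re-derives it by a direct integration-by-parts computation of $\int|\nabla w_{c}|^{2}\,\mathrm{dx}$; your route is marginally cleaner but otherwise identical.
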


\begin{proof}
Indeed, let $u=ve^{-\frac{1}{2}\left\vert x\right\vert ^{2}}$, then
\begin{align*}
&  {\int\limits_{\mathbb{R}^{N}}}\left\vert \nabla\left(  u-ce^{-\frac{1}%
{2}\left\vert x\right\vert ^{2}}\right)  \right\vert ^{2}\mathrm{dx}\\
&  ={\int\limits_{\mathbb{R}^{N}}}\left\vert \nabla\left[  \left(  v-c\right)
e^{-\frac{1}{2}\left\vert x\right\vert ^{2}}\right]  \right\vert
^{2}\mathrm{dx}\\
&  ={\int\limits_{\mathbb{R}^{N}}}\left\vert \nabla v\right\vert
^{2}e^{-\left\vert x\right\vert ^{2}}\mathrm{dx}-2{\int\limits_{\mathbb{R}%
^{N}}}\left(  v-c\right)  x\cdot\nabla\left(  v-c\right)  e^{-\left\vert
x\right\vert ^{2}}\mathrm{dx}+{\int\limits_{\mathbb{R}^{N}}}\left\vert
x\right\vert ^{2}\left\vert v-c\right\vert ^{2}e^{-\left\vert x\right\vert
^{2}}\mathrm{dx}\\
&  ={\int\limits_{\mathbb{R}^{N}}}\left\vert \nabla v\right\vert
^{2}e^{-\left\vert x\right\vert ^{2}}\mathrm{dx}+N{\int\limits_{\mathbb{R}%
^{N}}}\left\vert v-c\right\vert ^{2}e^{-\left\vert x\right\vert ^{2}%
}\mathrm{dx}-{\int\limits_{\mathbb{R}^{N}}}\left\vert x\right\vert
^{2}\left\vert v-c\right\vert ^{2}e^{-\left\vert x\right\vert ^{2}}%
\mathrm{dx}.
\end{align*}
Therefore%
\begin{align*}
&  {\int\limits_{\mathbb{R}^{N}}}\left\vert \nabla\left(  u-ce^{-\frac{1}%
{2}\left\vert x\right\vert ^{2}}\right)  \right\vert ^{2}\mathrm{dx}%
+{\int\limits_{\mathbb{R}^{N}}}\left\vert x\right\vert ^{2}\left\vert
u-ce^{-\frac{1}{2}\left\vert x\right\vert ^{2}}\right\vert ^{2}\mathrm{dx}%
+{\int\limits_{\mathbb{R}^{N}}}\left\vert u-ce^{-\frac{1}{2}\left\vert
x\right\vert ^{2}}\right\vert ^{2}\mathrm{dx}\\
&  ={\int\limits_{\mathbb{R}^{N}}}\left\vert \nabla\left[  \left(  v-c\right)
e^{-\frac{1}{2}\left\vert x\right\vert ^{2}}\right]  \right\vert
^{2}\mathrm{dx}+{\int\limits_{\mathbb{R}^{N}}}\left\vert x\right\vert
^{2}\left\vert v-c\right\vert ^{2}e^{-\left\vert x\right\vert ^{2}}%
\mathrm{dx}+{\int\limits_{\mathbb{R}^{N}}}\left\vert v-c\right\vert
^{2}e^{-\left\vert x\right\vert ^{2}}\mathrm{dx}\\
&  ={\int\limits_{\mathbb{R}^{N}}}\left\vert \nabla v\right\vert
^{2}e^{-\left\vert x\right\vert ^{2}}\mathrm{dx}+\left(  N+1\right)
{\int\limits_{\mathbb{R}^{N}}}\left\vert v-c\right\vert ^{2}e^{-\left\vert
x\right\vert ^{2}}\mathrm{dx}.
\end{align*}
Noting that by the Poincar\'{e} inequality, we get
\[
{\int\limits_{\mathbb{R}^{N}}}\left\vert \nabla v\right\vert ^{2}%
e^{-\left\vert x\right\vert ^{2}}\mathrm{dx}\geq2\inf_{c}{\int
\limits_{\mathbb{R}^{N}}}\left\vert v-c\right\vert ^{2}e^{-\left\vert
x\right\vert ^{2}}\mathrm{dx}.
\]
Hence
\begin{align*}
\frac{2}{N+3}  &  \inf_{c\in\mathbb{R}}{\int\limits_{\mathbb{R}^{N}}%
}\left\vert \nabla\left(  u-ce^{-\frac{1}{2}\left\vert x\right\vert ^{2}%
}\right)  \right\vert ^{2}\mathrm{dx}+{\int\limits_{\mathbb{R}^{N}}}\left\vert
x\right\vert ^{2}\left\vert u-ce^{-\frac{1}{2}\left\vert x\right\vert ^{2}%
}\right\vert ^{2}\mathrm{dx}+{\int\limits_{\mathbb{R}^{N}}}\left\vert
u-ce^{-\frac{1}{2}\left\vert x\right\vert ^{2}}\right\vert ^{2}\mathrm{dx}\\
&  \leq\frac{2}{N+3}{\int\limits_{\mathbb{R}^{N}}}\left\vert \nabla
v\right\vert ^{2}e^{-\left\vert x\right\vert ^{2}}\mathrm{dx}+\frac{2}%
{N+3}\left(  N+1\right)  \inf_{c\in\mathbb{R}}{\int\limits_{\mathbb{R}^{N}}%
}\left\vert v-c\right\vert ^{2}e^{-\left\vert x\right\vert ^{2}}\mathrm{dx}\\
&  \leq{\int\limits_{\mathbb{R}^{N}}}\left\vert \nabla v\right\vert
^{2}e^{-\left\vert x\right\vert ^{2}}\mathrm{dx}\\
&  ={\int\limits_{\mathbb{R}^{N}}}\left\vert \nabla\left(  ue^{\frac{1}%
{2}\left\vert x\right\vert ^{2}}\right)  \right\vert ^{2}e^{-\left\vert
x\right\vert ^{2}}\mathrm{dx}\\
&  ={\int\limits_{\mathbb{R}^{N}}}\left\vert \nabla u\right\vert
^{2}\mathrm{dx}+{\int\limits_{\mathbb{R}^{N}}}\left\vert x\right\vert
^{2}\left\vert u\right\vert ^{2}\mathrm{dx}-N{\int\limits_{\mathbb{R}^{N}}%
}\left\vert u\right\vert ^{2}\mathrm{dx}.
\end{align*}

Let $u=x_{1}e^{-\frac{1}{2}\left\vert x\right\vert ^{2}}$ and $v=x_{1}$. Then%
\begin{align*}
&  {\int\limits_{\mathbb{R}^{N}}}\left\vert \nabla u\right\vert ^{2}%
\mathrm{dx}+{\int\limits_{\mathbb{R}^{N}}}\left\vert x\right\vert
^{2}\left\vert u\right\vert ^{2}\mathrm{dx}-N{\int\limits_{\mathbb{R}^{N}}%
}\left\vert u\right\vert ^{2}\mathrm{dx}\\
&  =\pi^{\frac{N}{2}}%
\end{align*}
and
\begin{align*}
&  {\int\limits_{\mathbb{R}^{N}}}\left\vert \nabla\left(  u-ce^{-\frac{1}%
{2}\left\vert x\right\vert ^{2}}\right)  \right\vert ^{2}\mathrm{dx}%
+{\int\limits_{\mathbb{R}^{N}}}\left\vert x\right\vert ^{2}\left\vert
u-ce^{-\frac{1}{2}\left\vert x\right\vert ^{2}}\right\vert ^{2}\mathrm{dx}%
+{\int\limits_{\mathbb{R}^{N}}}\left\vert u-ce^{-\frac{1}{2}\left\vert
x\right\vert ^{2}}\right\vert ^{2}\mathrm{dx}\\
&  =\left\vert 1+\left(  N+1\right)  \left(  \frac{1}{2}+\left\vert
c\right\vert ^{2}\right)  \right\vert \pi^{\frac{N}{2}}.
\end{align*}
Therefore
\begin{align*}
&  \inf_{c\in\mathbb{R}}{\int\limits_{\mathbb{R}^{N}}}\left\vert \nabla\left(
u-ce^{-\frac{1}{2}\left\vert x\right\vert ^{2}}\right)  \right\vert
^{2}\mathrm{dx}+{\int\limits_{\mathbb{R}^{N}}}\left\vert x\right\vert
^{2}\left\vert u-ce^{-\frac{1}{2}\left\vert x\right\vert ^{2}}\right\vert
^{2}\mathrm{dx}+{\int\limits_{\mathbb{R}^{N}}}\left\vert u-ce^{-\frac{1}%
{2}\left\vert x\right\vert ^{2}}\right\vert ^{2}\mathrm{dx}\\
&  =\frac{N+3}{2}\pi^{\frac{N}{2}}\\
&  =\frac{N+3}{2}\left(  {\int\limits_{\mathbb{R}^{N}}}\left\vert \nabla
u\right\vert ^{2}\mathrm{dx}+{\int\limits_{\mathbb{R}^{N}}}\left\vert
x\right\vert ^{2}\left\vert u\right\vert ^{2}\mathrm{dx}-N{\int
\limits_{\mathbb{R}^{N}}}\left\vert u\right\vert ^{2}\mathrm{dx}\right)  .
\end{align*}

\end{proof}

\subsection{The stability of the Caffarelli-Kohn-Nirenberg inequalities}

Let $X_{a,b}$ be the completion of $C_{0}^{\infty}\left(  \mathbb{R}%
^{N}\right)  $ under the norm $\left(  {\int\limits_{\mathbb{R}^{N}}}%
\frac{\left\vert \nabla u\right\vert ^{2}}{\left\vert x\right\vert ^{2b}%
}\mathrm{dx}+{\int\limits_{\mathbb{R}^{N}}}\frac{\left\vert u\right\vert ^{2}%
}{\left\vert x\right\vert ^{2a}}\mathrm{dx}\right)  ^{\frac{1}{2}}$. We now
recall that by Corollary \ref{c4}, we have the following scale non-invariant
Caffarelli-Kohn-Nirenberg inequalities with remainders:

\begin{lemma}
\label{l2}Let $b+1-a>0$ and $b<\frac{N-2}{2}$. Then for $u\in X_{a,b}:$
\begin{align}
&  {\int\limits_{\mathbb{R}^{N}}}\frac{\left\vert \nabla u\right\vert ^{2}%
}{\left\vert x\right\vert ^{2b}}\mathrm{dx}+{\int\limits_{\mathbb{R}^{N}}%
}\frac{\left\vert u\right\vert ^{2}}{\left\vert x\right\vert ^{2a}}%
\mathrm{dx}-\left(  N-a-b-1\right)  {\int\limits_{\mathbb{R}^{N}}}%
\frac{\left\vert u\right\vert ^{2}}{\left\vert x\right\vert ^{a+b+1}%
}\mathrm{dx}\nonumber\\
&  ={\int\limits_{\mathbb{R}^{N}}}\frac{1}{\left\vert x\right\vert ^{2b}%
}\left\vert \nabla\left(  ue^{\frac{1}{b+1-a}\left\vert x\right\vert ^{b+1-a}%
}\right)  \right\vert ^{2}e^{-\frac{2}{b+1-a}\left\vert x\right\vert ^{b+1-a}%
}\mathrm{dx}. \label{CKN1}%
\end{align}

\end{lemma}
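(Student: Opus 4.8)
The plan is to read the identity off from Corollary \ref{c4}: once the absolute value there is resolved by a sign computation, what remains is to recognise the remainder square as a weighted Dirichlet integral after an exponential change of unknown, and then to extend from test functions to $X_{a,b}$ by density. First I would record that under the hypotheses $b<\frac{N-2}{2}$ and $b+1-a>0$ one has $a<b+1<\frac N2$, hence $a+b+1<2b+2<N$; in particular $N-a-b-1>0$, so that $\operatorname{sign}(N-a-b-1)=1$ and $|N-1-a-b|=N-a-b-1$. With the choices $A=|x|^{-b}$, $B=|x|^{-a}$, Corollary \ref{c4} (taken with $R=\infty$) then gives, for every $u\in C_0^\infty(\mathbb{R}^N)$,
\[
\int_{\mathbb{R}^N}\frac{|\nabla u|^2}{|x|^{2b}}\,\mathrm{dx}+\int_{\mathbb{R}^N}\frac{|u|^2}{|x|^{2a}}\,\mathrm{dx}-(N-a-b-1)\int_{\mathbb{R}^N}\frac{|u|^2}{|x|^{a+b+1}}\,\mathrm{dx}=\int_{\mathbb{R}^N}\left|\frac{\nabla u}{|x|^b}+\frac{u}{|x|^a}\frac{x}{|x|}\right|^2\mathrm{dx},
\]
all three integrals on the left being finite because $2a<N$, $2b<N$ and $a+b+1<N$.

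Next I would set $w:=u\,e^{\frac{1}{b+1-a}|x|^{b+1-a}}$, equivalently $u=w\,e^{-\frac{1}{b+1-a}|x|^{b+1-a}}$. Since $\nabla\!\big(\frac{1}{b+1-a}|x|^{b+1-a}\big)=|x|^{b-a}\frac{x}{|x|}$, the product rule gives $\nabla u=e^{-\frac{1}{b+1-a}|x|^{b+1-a}}\big(\nabla w-w\,|x|^{b-a}\frac{x}{|x|}\big)$, whence
\[
\frac{\nabla u}{|x|^b}+\frac{u}{|x|^a}\frac{x}{|x|}=\frac{e^{-\frac{1}{b+1-a}|x|^{b+1-a}}}{|x|^b}\,\nabla w ,
\]
the two contributions proportional to $w\,|x|^{-a}\frac{x}{|x|}$ cancelling identically. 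Squaring and integrating then turns the right-hand side of the first display into $\int_{\mathbb{R}^N}\frac{1}{|x|^{2b}}\big|\nabla\big(u\,e^{\frac{1}{b+1-a}|x|^{b+1-a}}\big)\big|^2 e^{-\frac{2}{b+1-a}|x|^{b+1-a}}\,\mathrm{dx}$, which is precisely (\ref{CKN1}) for $u\in C_0^\infty(\mathbb{R}^N)$.

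Finally I would extend from $C_0^\infty(\mathbb{R}^N)$ to all of $X_{a,b}$ by density. Every term of (\ref{CKN1}) is continuous for the $X_{a,b}$ norm: the two quadratic terms on the left belong to that norm, the weighted term $\int|u|^2|x|^{-a-b-1}\,\mathrm{dx}$ is controlled by the geometric mean of those two via the Caffarelli--Kohn--Nirenberg inequality of Corollary \ref{c7} (together with $2\alpha\beta\le\alpha^2+\beta^2$), and the right-hand side equals $\int|\frac{\nabla u}{|x|^b}+\frac{u}{|x|^a}\frac{x}{|x|}|^2\,\mathrm{dx}$ by the pointwise cancellation above, which is continuous because $\frac{\nabla u}{|x|^b}$ and $\frac{u}{|x|^a}\frac{x}{|x|}$ depend continuously on $u\in X_{a,b}$ in $L^2(\mathbb{R}^N)$. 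Passing to the limit therefore gives (\ref{CKN1}) for all $u\in X_{a,b}$, the right-hand side being read, as usual, through the weak derivative of $u\,e^{\frac{1}{b+1-a}|x|^{b+1-a}}$, which agrees with the extension by the product rule. I do not expect a genuine obstacle here; the only points needing a line of care are the strict inequalities $2a<N$, $2b<N$ and $a+b+1<N$ (guaranteeing all integrals converge) and the local square-integrability of the weight $|x|^{b-a}$ occurring in $\nabla\big(u\,e^{\frac{1}{b+1-a}|x|^{b+1-a}}\big)$ against $|x|^{-2b}\,\mathrm{dx}$, which holds because $b-a>-1$.
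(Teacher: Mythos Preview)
Your proof is correct and follows exactly the route the paper intends: the lemma is stated in the paper simply as a recall ``by Corollary~\ref{c4}'', and your argument carries out precisely that derivation---resolving the sign via $N-a-b-1>0$, applying the identity of Corollary~\ref{c4}, and then recognising the remainder square as the weighted Dirichlet integral through the substitution $w=u\,e^{\frac{1}{b+1-a}|x|^{b+1-a}}$. Your explicit density argument to pass from $C_0^\infty(\mathbb{R}^N)$ to $X_{a,b}$ is more detail than the paper provides, but is the natural completion and is handled correctly.
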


We also state here the $L^{2}$-Caffarelli-Kohn-Nirenberg inequalities

\begin{lemma}
\label{l3}Let $b+1-a>0$ and $b<\frac{N-2}{2}$. Then for $u\in X_{a,b}:$%
\begin{equation}
\left(  {\int\limits_{\mathbb{R}^{N}}}\frac{\left\vert \nabla u\right\vert
^{2}}{\left\vert x\right\vert ^{2b}}\mathrm{dx}\right)  \left(  {\int
\limits_{\mathbb{R}^{N}}}\frac{\left\vert u\right\vert ^{2}}{\left\vert
x\right\vert ^{2a}}\mathrm{dx}\right)  \geq\left\vert \frac{N-a-b-1}%
{2}\right\vert ^{2}\left(  {\int\limits_{\mathbb{R}^{N}}}\frac{\left\vert
u\right\vert ^{2}}{\left\vert x\right\vert ^{a+b+1}}\mathrm{dx}\right)  ^{2}.
\label{CKN2}%
\end{equation}
The equality happens if and only if $\displaystyle u(x)=\alpha\exp
(-\frac{\beta}{b+1-a}|x|^{b+1-a})$ with $\alpha\in\mathbb{R}$, $\beta>0.$
\end{lemma}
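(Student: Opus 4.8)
The plan is to read off the inequality (\ref{CKN2}) from the Caffarelli--Kohn--Nirenberg identity of Corollary \ref{c7} (taken with $A=\mathrm{sign}(N-a-b-1)|x|^{-b}$ and $B=|x|^{-a}$, the choice made there), and then to locate the equality cases by inspecting its remainder term. First I would note that the hypotheses $b+1-a>0$ and $b<\frac{N-2}{2}$ force $a+b+1<2(b+1)<N$, so that $a+b\neq N-1$ and $N-a-b-1>0$; in particular $\mathrm{sign}(N-a-b-1)=1$ and Corollary \ref{c7} is available. For $u\in C_0^\infty(\mathbb{R}^N\setminus\{0\})$ the $\nabla u$-identity there reads
\begin{align*}
&\left({\int\limits_{\mathbb{R}^N}}\frac{|\nabla u|^2}{|x|^{2b}}\mathrm{dx}\right)^{\frac12}\left({\int\limits_{\mathbb{R}^N}}\frac{|u|^2}{|x|^{2a}}\mathrm{dx}\right)^{\frac12}-\left|\frac{N-a-b-1}{2}\right|{\int\limits_{\mathbb{R}^N}}\frac{|u|^2}{|x|^{a+b+1}}\mathrm{dx}\\
&\qquad=\frac12{\int\limits_{\mathbb{R}^N}}\left|\frac{\left\|u|x|^{-a}\right\|_2^{1/2}}{\left\||\nabla u||x|^{-b}\right\|_2^{1/2}}\,\frac{\nabla u}{|x|^{b}}+\frac{\left\||\nabla u||x|^{-b}\right\|_2^{1/2}}{\left\|u|x|^{-a}\right\|_2^{1/2}}\,\frac{u}{|x|^{a}}\frac{x}{|x|}\right|^2\mathrm{dx}\ \ge\ 0,
\end{align*}
which is exactly (\ref{CKN2}) after rearranging and squaring (the case $u\equiv0$ being trivial). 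To pass to a general $u\in X_{a,b}$ I would extend by density, approximating $u$ in the $X_{a,b}$-norm by functions of $C_0^\infty(\mathbb{R}^N\setminus\{0\})$: this is legitimate because $b<\frac{N-2}{2}$ keeps the relevant weighted Hardy inequality nondegenerate, so truncating near $0$ and near $\infty$ is harmless, and every integral passes to the limit, the term $\int|u|^2|x|^{-(a+b+1)}\mathrm{dx}$ being controlled along the way by identity (\ref{CKN1}) of Lemma \ref{l2}. (Alternatively one may bypass the optimized identity: from Lemma \ref{key}, identity (\ref{CKN2.2}) with the same $A,B$, one gets $\alpha^2\int\frac{|\nabla u|^2}{|x|^{2b}}\mathrm{dx}+\alpha^{-2}\int\frac{|u|^2}{|x|^{2a}}\mathrm{dx}\ge(N-a-b-1)\int\frac{|u|^2}{|x|^{a+b+1}}\mathrm{dx}$ for every $\alpha\neq0$, and minimizing the left side over $\alpha$ yields (\ref{CKN2}).)

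For the equality cases, suppose equality holds in (\ref{CKN2}) for some nontrivial $u\in X_{a,b}$. Then the nonnegative remainder integral above must vanish, hence its integrand vanishes a.e.; since $N-a-b-1>0$ this gives
\[
\nabla u(x)=-\,\frac{\left\||\nabla u||x|^{-b}\right\|_2}{\left\|u|x|^{-a}\right\|_2}\,|x|^{b-a-1}\,x\,u(x)\qquad\text{a.e.}
\]
The right-hand side is a scalar multiple of $x$, so $\nabla u$ is everywhere parallel to $x$ and $u$ is radial; writing $u=u(r)$ with $r=|x|$, and setting $\beta:=\left\||\nabla u||x|^{-b}\right\|_2/\left\|u|x|^{-a}\right\|_2>0$, one obtains the separable ODE $u'(r)=-\beta r^{b-a}u(r)$ on $(0,\infty)$, whose solutions are $u(r)=\alpha\exp\!\left(-\tfrac{\beta}{b+1-a}r^{\,b+1-a}\right)$. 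Conversely, for $u$ of this form the bracket inside the remainder integral is identically zero (directly, or from the computation behind Corollary \ref{c7}), so equality holds; moreover $u\in X_{a,b}$, since near infinity the factor $\exp(-\tfrac{\beta}{b+1-a}r^{\,b+1-a})$ (with $b+1-a>0$) decays faster than any polynomial, while near the origin the finiteness of $\int|u|^2|x|^{-2a}\mathrm{dx}$, $\int|\nabla u|^2|x|^{-2b}\mathrm{dx}$ and $\int|u|^2|x|^{-(a+b+1)}\mathrm{dx}$ reduces to $2a<N$, $2a<N+2$ and $a+b+1<N$, all consequences of the hypotheses.

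The step I expect to be the main obstacle is making the equality analysis airtight on two points. First, equality in the \emph{inequality} (\ref{CKN2}) forces the remainder in the underlying \emph{identity} to vanish only once that identity is known to hold for the extremal $u$ itself, so the density extension must be carried out for a class of functions large enough to contain the candidate optimizers, not merely for $C_0^\infty$. Second, the a.e.\ pointwise relation $\nabla u=g(|x|)\,x\,u$ must be legitimately integrated: one needs that a function in $X_{a,b}$ satisfying this with $g$ locally bounded away from $0$ is, after modification on a null set, locally absolutely continuous in $r$ on $(0,\infty)$, hence genuinely given by the exponential formula there, and then extends continuously through the origin. Everything else --- the inequality itself and the direct check that the exponential profiles lie in $X_{a,b}$ and saturate (\ref{CKN2}) --- is a short computation.
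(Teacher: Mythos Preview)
Your approach is correct and is precisely the one the paper intends: the inequality is read off from the identity of Corollary~\ref{c7} (indeed, the paper notes explicitly at the end of that corollary that ``these identities imply'' the CKN inequality), and the equality characterisation comes from inspecting when the nonnegative remainder vanishes. The paper itself does not spell out the density argument or the ODE step for the equality case, so your discussion of those points, and your honest flagging of the two technical issues, actually goes a little further than the text.
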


Therefore we now can define the Caffarelli-Kohn-Nirenberg deficit%
\[
\delta_{1,a,b}\left(  u\right)  :=\left(  {\int\limits_{\mathbb{R}^{N}}}%
\frac{|u|^{2}}{|x|^{2a}}\mathrm{dx}\right)  ^{\frac{1}{2}}\left(
{\int\limits_{\mathbb{R}^{N}}}\frac{|\nabla u|^{2}}{|x|^{2b}}\mathrm{dx}%
\right)  ^{\frac{1}{2}}-\left\vert \frac{N-a-b-1}{2}\right\vert \left(
{\int\limits_{\mathbb{R}^{N}}}\frac{|u|^{2}}{|x|^{a+b+1}}\mathrm{dx}\right)
.
\]

We use the following Poincar\'{e} inequality in \cite[Chapter 4]{BGL14}:

\begin{lemma}
\label{l4}Let $\mu$ be a log-concave probability measure on $\mathbb{R}^{N}$.
Then $\mu$ satisfies a Poincar\'{e} inequality:%
\[
{\int\limits_{\mathbb{R}^{N}}}\left\vert \nabla v\right\vert ^{2}d\mu
\gtrsim{\int\limits_{\mathbb{R}^{N}}}\left\vert v-{\int\limits_{\mathbb{R}%
^{N}}v}d\mu\right\vert ^{2}d\mu.
\]

\end{lemma}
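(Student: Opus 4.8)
The plan is to read the asserted inequality as a spectral-gap statement: with $\bar v:=\int_{\mathbb{R}^{N}}v\,d\mu$, one must produce a constant $C_\mu>0$ (depending on $\mu$ alone) so that $\operatorname{Var}_\mu(v):=\int_{\mathbb{R}^{N}}|v-\bar v|^{2}\,d\mu\le C_\mu\int_{\mathbb{R}^{N}}|\nabla v|^{2}\,d\mu$ for all sufficiently regular $v$ (whence, by density, for all $v$ in the associated weighted Sobolev space). First I would normalize: if $\mu$ is carried by a proper affine subspace, restrict to its affine hull and reduce to the full-dimensional situation, so that $d\mu=e^{-V}\,dx$ with $V$ convex on the open convex set $\Omega=\{V<+\infty\}$, automatically locally Lipschitz there. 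For the main argument assume $\Omega=\mathbb{R}^{N}$; the two remaining cases are handled separately --- bounded $\Omega$ by a Payne--Weinberger-type bound, which gives a Poincar\'{e} constant $\lesssim\operatorname{diam}(\Omega)^{2}$ for every log-concave measure of bounded support, and an unbounded proper $\Omega$ by the scheme below run with a reflecting (Neumann) condition on the convex boundary $\partial\Omega$.

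For $\Omega=\mathbb{R}^{N}$ I would use the Lyapunov-function (drift-condition) method, in the spirit of Bakry--Barthe--Cattiaux--Guillin. Work with the Dirichlet form $\mathcal{E}(f,g)=\int_{\mathbb{R}^{N}}\nabla f\cdot\nabla g\,d\mu$ and its weak generator $\mathcal{L}=\Delta-\nabla V\cdot\nabla$ (meaningful since $\nabla V\in L^{\infty}_{\mathrm{loc}}$ and $\nabla e^{-V}=-e^{-V}\nabla V$ in $L^{1}_{\mathrm{loc}}$). There are three steps. (i) Establish growth at infinity of $V$: there exist $R_{0},\varepsilon>0$ with $\tfrac{x}{|x|}\cdot\nabla V(x)\ge\varepsilon$ for a.e.\ $|x|\ge R_{0}$, equivalently that $\mu$ has a sub-exponential tail. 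This rests on two facts: the recession function $V_{\infty}(x):=\lim_{t\to\infty}V(tx)/t$ is strictly positive for $x\ne 0$ (if $V_{\infty}(x_{0})\le 0$ then $V$ is nonincreasing along the direction $x_{0}$, forcing $\int e^{-V}=\infty$), and this positivity is uniform on the unit sphere by the lower semicontinuity of the convex, homogeneous $V_{\infty}$; the pointwise radial-derivative bound then follows. (ii) Take the Lyapunov function $W(x)=e^{\gamma\langle x\rangle}$, $\langle x\rangle=\sqrt{1+|x|^{2}}$, with $\gamma>0$ small; a direct computation gives $\mathcal{L}W=\bigl(\gamma^{2}|\nabla\langle x\rangle|^{2}+\gamma\,\Delta\langle x\rangle-\gamma\,\nabla V\cdot\nabla\langle x\rangle\bigr)W$, and by (i) the bracket is $\le-\theta<0$ outside a ball $B_{R}$ while it is bounded on $B_{R}$, so the drift inequality $\mathcal{L}W\le-\theta W+b\,\mathbf{1}_{B_{R}}$ holds for suitable $R,\theta>0$ and $b<\infty$, with $W\ge 1$ (and then $W\in L^{1}(\mu)$ automatically). (iii) On $B_{R}$ the normalized restriction of $\mu$ has continuous, hence bounded above and below, density, so it obeys a local Poincar\'{e} inequality; feeding this and the drift inequality into the Bakry--Barthe--Cattiaux--Guillin lemma yields $\operatorname{Var}_\mu(v)\le C_\mu\,\mathcal{E}(v,v)$, which is the assertion.

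The step I expect to be the real obstacle is (i) --- the uniform-in-direction lower bound on the radial derivative of the convex potential of a probability density --- since this is precisely the place where log-concavity, as opposed to mere rapid decay, is used, and it must be carried out in the relative interior when $\mu$ fails to have full support. Should that become awkward, the fallback I would keep in reserve routes through isoperimetry: the Kannan--Lov\'{a}sz--Simonovits localization (needle decomposition) lemma yields, for every log-concave $\mu$, a Cheeger isoperimetric inequality $\mu^{+}(\partial A)\ge h_{\mu}\min\{\mu(A),1-\mu(A)\}$ with $h_{\mu}>0$, whereupon Cheeger's inequality $\lambda_{1}(\mu)\ge h_{\mu}^{2}/4$ delivers the Poincar\'{e} inequality directly --- with the localization lemma now being the crux. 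In either route the resulting constant depends on $\mu$, which is all that the symbol $\gtrsim$ in the statement requires.
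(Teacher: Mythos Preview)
The paper does not prove this lemma at all: it is quoted verbatim as a known result from \cite[Chapter~4]{BGL14} and used as a black box. So there is no ``paper's own proof'' to compare against; any correct argument you supply goes strictly beyond what the authors do.

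Your proposal is a sound route to an actual proof. The Lyapunov/drift scheme you outline is exactly the Bakry--Barthe--Cattiaux--Guillin argument, and your fallback through KLS localization and Cheeger's inequality is the other standard proof. Either yields a $\mu$-dependent constant, which is all the statement asks for.

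One remark on the step you flag as the obstacle. Step~(i) is easier than you suggest once you first establish the linear lower bound $V(x)\ge a|x|-b$ (which follows because the convex sublevel sets $\{V\le c\}$ have finite volume, hence are bounded, and then a one-line convexity interpolation along rays gives the linear growth). With that in hand, for any unit vector $\omega$ the one-variable convex function $g_\omega(t)=V(t\omega)$ satisfies
\[
g_\omega'(t)\;\ge\;\frac{g_\omega(t)-g_\omega(0)}{t}\;\ge\;\frac{at-b-V(0)}{t}\;=\;a-\frac{b+V(0)}{t},
\]
so $g_\omega'(t)\ge a/2$ for all $t\ge 2(b+V(0))/a$, uniformly in $\omega$. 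This gives the uniform radial-derivative bound directly, with no appeal to semicontinuity of the recession function; the rest of your steps~(ii)--(iii) then go through as written.
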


We note that when $a\leq0$, then $\mu\left(  x\right)  =\frac{e^{-\frac
{2}{1-a}\left\vert x\right\vert ^{1-a}}}{{\int\limits_{\mathbb{R}^{N}}%
}e^{-\frac{2}{1-a}\left\vert x\right\vert ^{1-a}}\mathrm{dx}}$ is a
log-concave probability measure on $\mathbb{R}^{N}$. Therefore, there exists
$C\left(  N,a\right)  >0$ such that the following Poincar\'{e} inequality
holds%
\[
{\int\limits_{\mathbb{R}^{N}}}\left\vert \nabla v\right\vert ^{2}e^{-\frac
{2}{1-a}\left\vert x\right\vert ^{1-a}}\mathrm{dx}\geq C\left(  N,a\right)
\inf_{c\in\mathbb{R}}{\int\limits_{\mathbb{R}^{N}}}\left\vert v-c\right\vert
^{2}e^{-\frac{2}{1-a}\left\vert x\right\vert ^{1-a}}\mathrm{dx}.
\]
Therefore, we obtain the following result:

\begin{theorem}
\label{T3.5}Let $a\leq0$. Then we have that for $u\in X_{a,0}:$%
\begin{align*}
&  {\int\limits_{\mathbb{R}^{N}}}\left\vert \nabla u\right\vert ^{2}%
\mathrm{dx}+{\int\limits_{\mathbb{R}^{N}}}\frac{\left\vert u\right\vert ^{2}%
}{\left\vert x\right\vert ^{2a}}\mathrm{dx}-\left(  N-a-1\right)
{\int\limits_{\mathbb{R}^{N}}}\frac{\left\vert u\right\vert ^{2}}{\left\vert
x\right\vert ^{a+1}}\mathrm{dx}\\
&  \geq C\left(  N,a\right)  \inf_{c\in\mathbb{R}}{\int\limits_{\mathbb{R}%
^{N}}}\left\vert u-ce^{-\frac{1}{1-a}\left\vert x\right\vert ^{1-a}%
}\right\vert ^{2}\mathrm{dx}.
\end{align*}

\end{theorem}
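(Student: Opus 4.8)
The plan is to combine the scale non-invariant Caffarelli-Kohn-Nirenberg identity of Lemma~\ref{l2}, specialized to $b=0$, with the Poincar\'{e} inequality for log-concave measures of Lemma~\ref{l4}. Since $a\le 0$ we have $b+1-a=1-a>0$ and (for $N\ge 3$) $b=0<\frac{N-2}{2}$, so Lemma~\ref{l2} applies with $b=0$ and rewrites the left-hand side of the desired inequality exactly as the weighted Dirichlet form
\[
{\int\limits_{\mathbb{R}^{N}}}\left\vert \nabla\bigl(ue^{\frac{1}{1-a}\left\vert x\right\vert ^{1-a}}\bigr)\right\vert ^{2}e^{-\frac{2}{1-a}\left\vert x\right\vert ^{1-a}}\mathrm{dx}.
\]
This identity is the only ``algebraic'' input; everything that follows is about bounding this quantity from below.

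First I would check that $d\mu:=Z^{-1}e^{-\frac{2}{1-a}\left\vert x\right\vert ^{1-a}}\mathrm{dx}$, where $Z:={\int\limits_{\mathbb{R}^{N}}}e^{-\frac{2}{1-a}\left\vert x\right\vert ^{1-a}}\mathrm{dx}<\infty$, is a log-concave probability measure. Because $a\le 0$ gives $1-a\ge 1$, the function $t\mapsto t^{1-a}$ is convex and nondecreasing on $[0,\infty)$, hence $x\mapsto\left\vert x\right\vert ^{1-a}$ is convex on $\mathbb{R}^{N}$; thus $-\log\frac{d\mu}{dx}$ is convex and $\mu$ is log-concave. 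Lemma~\ref{l4} then produces a constant $C(N,a)>0$ with ${\int\limits_{\mathbb{R}^{N}}}\left\vert \nabla v\right\vert ^{2}d\mu\ge C(N,a){\int\limits_{\mathbb{R}^{N}}}\bigl\vert v-{\int\limits_{\mathbb{R}^{N}}}v\,d\mu\bigr\vert ^{2}d\mu\ge C(N,a)\inf_{c\in\mathbb{R}}{\int\limits_{\mathbb{R}^{N}}}\left\vert v-c\right\vert ^{2}d\mu$, where I have relaxed the $\mu$-mean of $v$ to an infimum over constants. Multiplying by $Z$ turns $d\mu$ back into $e^{-\frac{2}{1-a}\left\vert x\right\vert ^{1-a}}\mathrm{dx}$.

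Then I would set $v=ue^{\frac{1}{1-a}\left\vert x\right\vert ^{1-a}}$, so that $(v-c)e^{-\frac{1}{1-a}\left\vert x\right\vert ^{1-a}}=u-ce^{-\frac{1}{1-a}\left\vert x\right\vert ^{1-a}}$; hence the right-hand side of the (rescaled) Poincar\'{e} inequality equals $C(N,a)\inf_{c}{\int\limits_{\mathbb{R}^{N}}}\left\vert u-ce^{-\frac{1}{1-a}\left\vert x\right\vert ^{1-a}}\right\vert ^{2}\mathrm{dx}$, while its left-hand side is precisely the weighted Dirichlet form obtained from Lemma~\ref{l2}. Chaining the two displays gives the claim. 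For the passage from test functions to all of $u\in X_{a,0}$, I would note that $u\mapsto v$ is, up to the constant $Z$, an isometry of $L^{2}(\mathrm{dx})$ onto $L^{2}(\mu)$ carrying the finite-deficit space onto $W^{1,2}(\mu)$, so it suffices to run the argument on a dense subset of $X_{a,0}$ on which $v$ is manifestly admissible in Lemma~\ref{l4} (e.g.\ $u\in C_{0}^{\infty}(\mathbb{R}^{N})$, for which $v$ is Lipschitz with rapid decay).

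The only genuinely substantive step is the log-concavity verification --- equivalently the convexity of $\left\vert x\right\vert ^{1-a}$, which is exactly what the hypothesis $a\le 0$ buys --- together with the routine check that the transformed function $v$ lies in the weighted Sobolev space on which Lemma~\ref{l4} is valid. Once Lemma~\ref{l2} is available everything else is bookkeeping. I would also flag that $C(N,a)$ here is only obtained qualitatively from Lemma~\ref{l4}; no explicit value is asserted, the sharp constant being accessible only in the Gaussian case $a=-1$ (the Heisenberg Uncertainty Principle), which is treated separately above.
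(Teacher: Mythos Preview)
Your proof is correct and follows essentially the same approach as the paper: apply Lemma~\ref{l2} with $b=0$ to rewrite the deficit as the weighted Dirichlet form, verify that $e^{-\frac{2}{1-a}|x|^{1-a}}$ is log-concave because $a\le 0$, apply Lemma~\ref{l4}, and then substitute $v=ue^{\frac{1}{1-a}|x|^{1-a}}$. The paper's proof is slightly terser (the log-concavity check is placed in the paragraph preceding the theorem rather than inside the proof), but the logic is identical.
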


\begin{proof}
For $u\in X_{a,0}:$%
\begin{align*}
&  {\int\limits_{\mathbb{R}^{N}}}\left\vert \nabla u\right\vert ^{2}%
\mathrm{dx}+{\int\limits_{\mathbb{R}^{N}}}\frac{\left\vert u\right\vert ^{2}%
}{\left\vert x\right\vert ^{2a}}\mathrm{dx}-\left(  N-a-1\right)
{\int\limits_{\mathbb{R}^{N}}}\frac{\left\vert u\right\vert ^{2}}{\left\vert
x\right\vert ^{a+1}}\mathrm{dx}\\
&  ={\int\limits_{\mathbb{R}^{N}}}\left\vert \nabla\left(  ue^{\frac{1}%
{1-a}\left\vert x\right\vert ^{1-a}}\right)  \right\vert ^{2}e^{-\frac{2}%
{1-a}\left\vert x\right\vert ^{1-a}}\mathrm{dx}\\
&  \geq C\left(  N,a\right)  \inf_{c\in\mathbb{R}}{\int\limits_{\mathbb{R}%
^{N}}}\left\vert ue^{\frac{1}{1-a}\left\vert x\right\vert ^{1-a}}-c\right\vert
^{2}e^{-\frac{2}{1-a}\left\vert x\right\vert ^{1-a}}\mathrm{dx}\\
&  =C\left(  N,a\right)  \inf_{c\in\mathbb{R}}{\int\limits_{\mathbb{R}^{N}}%
}\left\vert u-ce^{-\frac{1}{1-a}\left\vert x\right\vert ^{1-a}}\right\vert
^{2}\mathrm{dx}.
\end{align*}

\end{proof}

In order to study the stability results for the Caffarelli-Kohn-Nirenberg
inequalities, we will now establish a weighted Poincar\'{e} inequality for the
log-concave probability measure. More precisely, we will prove the following
weighted Poincar\'{e} inequality:

\begin{lemma}
\label{key2}For some $\delta>0$, $N-2>\mu\geq0$ and $\alpha\geq\frac{N-2-\mu
}{N-2}:$%
\[
{\int\limits_{\mathbb{R}^{N}}}\frac{\left\vert \nabla v\left(  y\right)
\right\vert ^{2}}{\left\vert y\right\vert ^{\mu}}e^{-\delta\left\vert
y\right\vert ^{\alpha}}dy\geq C\left(  N,\alpha,\delta,\mu\right)  \inf
_{c}{\int\limits_{\mathbb{R}^{N}}}\frac{\left\vert v\left(  y\right)
-c\right\vert ^{2}}{\left\vert y\right\vert ^{\frac{N\mu}{N-2}}}%
e^{-\delta\left\vert y\right\vert ^{\alpha}}dy.
\]

\end{lemma}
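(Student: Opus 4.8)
The plan is to reduce Lemma~\ref{key2} to the Poincar\'e inequality for log-concave probability measures (Lemma~\ref{l4}) by a radial power substitution. Set
\[
\theta:=\frac{N-2}{N-2-\mu},
\]
which is well defined and satisfies $\theta\ge 1$ since $0\le\mu<N-2$ (so in particular $N\ge 3$), with $\theta=1$ exactly when $\mu=0$. I would prove the estimate first for $v\in C_0^\infty(\mathbb{R}^N\setminus\{0\})$ and pass to the general case by a routine density argument (points have zero $H^1$-capacity because $N\ge3$). Introduce the radial diffeomorphism $\Phi:\mathbb{R}^N\setminus\{0\}\to\mathbb{R}^N\setminus\{0\}$, $\Phi(x)=|x|^{\theta-1}x$, which satisfies $|\Phi(x)|=|x|^\theta$ and fixes the angular variable, and set $\tilde v:=v\circ\Phi\in C_0^\infty(\mathbb{R}^N\setminus\{0\})$.

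The next step is to record the change-of-variables data. Writing $x=s\omega$ with $s=|x|$, $\omega\in\mathbb{S}^{N-1}$, the map $\Phi$ acts as $(s,\omega)\mapsto(s^\theta,\omega)$, so $\mathrm{dy}=\theta\,s^{(\theta-1)N}\,\mathrm{dx}$, and differentiating $\tilde v(s,\omega)=v(s^\theta,\omega)$ separately in $s$ and $\omega$ gives
\[
|\nabla v|^2\big|_{y=\Phi(x)}=\frac{s^{2-2\theta}}{\theta^2}\,(\partial_s\tilde v)^2+s^{-2\theta}\,|\nabla_\omega\tilde v|^2\ \ge\ \frac{s^{2-2\theta}}{\theta^2}\,|\nabla_x\tilde v|^2,
\]
the inequality holding because $\theta\ge1$ and $|\nabla_x\tilde v|^2=(\partial_s\tilde v)^2+s^{-2}|\nabla_\omega\tilde v|^2$.

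Now I would substitute in both integrals of Lemma~\ref{key2}. On the left, $|y|^{-\mu}=s^{-\theta\mu}$ and $e^{-\delta|y|^\alpha}=e^{-\delta s^{\theta\alpha}}$, and the total power of $s$ produced by the gradient bound, the weight, and the Jacobian is $(2-2\theta)-\theta\mu+(\theta-1)N=(2-N)+\theta(N-2-\mu)$, which vanishes exactly by the choice of $\theta$; hence
\[
\int_{\mathbb{R}^N}\frac{|\nabla v|^2}{|y|^\mu}\,e^{-\delta|y|^\alpha}\,\mathrm{dy}\ \ge\ \frac1\theta\int_{\mathbb{R}^N}|\nabla\tilde v|^2\,e^{-\delta|x|^{\theta\alpha}}\,\mathrm{dx}.
\]
On the right, since $|v-c|=|\tilde v-c|$ for constant $c$ the substitution is an exact identity: the weight $|y|^{-N\mu/(N-2)}=s^{-\theta N\mu/(N-2)}$ times the Jacobian $\theta s^{(\theta-1)N}$ leaves the $s$-power $N\big(\theta\tfrac{N-2-\mu}{N-2}-1\big)$, which is again $0$ by the choice of $\theta$, so
\[
\inf_c\int_{\mathbb{R}^N}\frac{|v-c|^2}{|y|^{N\mu/(N-2)}}\,e^{-\delta|y|^\alpha}\,\mathrm{dy}\ =\ \theta\inf_c\int_{\mathbb{R}^N}|\tilde v-c|^2\,e^{-\delta|x|^{\theta\alpha}}\,\mathrm{dx}.
\]
The weight $e^{-\delta|x|^{\theta\alpha}}$ is integrable on $\mathbb{R}^N$ (as $\delta,\theta\alpha>0$), and after normalization it is a log-concave probability measure precisely when $|x|^{\theta\alpha}$ is convex, i.e. when $\theta\alpha\ge1$; by the definition of $\theta$ this is exactly the hypothesis $\alpha\ge\frac{N-2-\mu}{N-2}$. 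Applying Lemma~\ref{l4} to $\tilde v$ against this measure and chaining the two displays yields Lemma~\ref{key2} with $C(N,\alpha,\delta,\mu)=c_0/\theta^2$, where $c_0=c_0(N,\theta\alpha,\delta)$ is the log-concave Poincar\'e constant.

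The point demanding the most care — and the only place an inequality rather than an identity enters — is the gradient estimate: when $\theta\ne1$ the map $\Phi$ stretches the radial and angular directions by different factors, so $|\nabla v|^2$ only dominates $\theta^{-2}s^{2-2\theta}|\nabla\tilde v|^2$ instead of equalling a scalar multiple of it; this is harmless since we only need a Poincar\'e-type lower bound. The decisive structural fact is algebraic: a \emph{single} exponent $\theta$ must kill the spurious powers of $|x|$ in both the gradient integral and the variance integral at once, and the two requirements are compatible only for $\theta=\frac{N-2}{N-2-\mu}$; it is then a fortunate compatibility that, for this $\theta$, the resulting Gaussian-type measure is log-concave precisely on the stated range of $\alpha$.
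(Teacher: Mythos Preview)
Your proof is correct and is essentially the same as the paper's: both introduce the radial power map $x\mapsto|x|^{\lambda-1}x$ with $\lambda=\theta=\frac{N-2}{N-2-\mu}$, use the gradient comparison $|\nabla v(|x|^{\lambda-1}x)|\ge \lambda^{-1}|x|^{1-\lambda}|\nabla\tilde v(x)|$ (which the paper quotes from \cite{LL17} while you derive it directly in polar coordinates), observe that this choice of $\lambda$ simultaneously kills the $|x|$-powers in both integrals, and then apply the log-concave Poincar\'e inequality to $e^{-\delta|x|^{\lambda\alpha}}$ using $\lambda\alpha\ge1$. Your write-up is slightly more explicit about the polar-coordinate mechanism and the bookkeeping of constants, but the argument is the same.
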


\begin{proof}
Let $\overline{v}\left(  x\right)  =\left(  \frac{1}{\lambda}\right)
^{\frac{1}{2}}v\left(  \left\vert x\right\vert ^{\lambda-1}x\right)  $. Note
that for the change of variable $x\rightarrow\left\vert x\right\vert
^{\lambda-1}x$, the Jacobian is $\lambda\left\vert x\right\vert ^{N\left(
\lambda-1\right)  }$. Now, for $\lambda\geq1$, we can show that%
\[
\left\vert \nabla\overline{v}\left(  x\right)  \right\vert \leq\lambda
^{\frac{1}{2}}\left\vert x\right\vert ^{\lambda-1}\left\vert \nabla v\left(
\left\vert x\right\vert ^{\lambda-1}x\right)  \right\vert .
\]
See \cite{LL17}. Therefore (set $y=\left\vert x\right\vert ^{\lambda-1}x$)
\begin{align*}
{\int\limits_{\mathbb{R}^{N}}}\frac{\left\vert \nabla v\left(  y\right)
\right\vert ^{2}}{\left\vert y\right\vert ^{\mu}}e^{-\delta\left\vert
y\right\vert ^{\alpha}}dy  &  ={\int\limits_{\mathbb{R}^{N}}}\frac{\left\vert
\nabla v\left(  \left\vert x\right\vert ^{\lambda-1}x\right)  \right\vert
^{2}}{\left\vert x\right\vert ^{\lambda\mu}}e^{-\delta\left\vert x\right\vert
^{\lambda\alpha}}d\left\vert x\right\vert ^{N\left(  \lambda-1\right)
}\mathrm{dx}\\
&  \geq{\int\limits_{\mathbb{R}^{N}}}\frac{\left\vert \nabla\overline
{v}\left(  x\right)  \right\vert ^{2}}{\lambda\left\vert x\right\vert
^{2\left(  \lambda-1\right)  +\lambda\mu-N\left(  \lambda-1\right)  }%
}e^{-\delta\left\vert x\right\vert ^{\lambda\alpha}}\lambda\mathrm{dx}\\
&  ={\int\limits_{\mathbb{R}^{N}}}\frac{\left\vert \nabla\overline{v}\left(
x\right)  \right\vert ^{2}}{\left\vert x\right\vert ^{\lambda\left(
2+\mu-N\right)  +N-2}}e^{-\delta\left\vert x\right\vert ^{\lambda\alpha}%
}\mathrm{dx}.
\end{align*}
So if we choose $\lambda=\frac{N-2}{N-2-\mu}\geq1$, then by noting that the
measure $e^{-\delta\left\vert x\right\vert ^{\frac{N-2}{N-2-\mu}\alpha}}$ is
log-concave, we obtain
\begin{align*}
{\int\limits_{\mathbb{R}^{N}}}\frac{\left\vert \nabla v\left(  y\right)
\right\vert ^{2}}{\left\vert y\right\vert ^{\mu}}e^{-\delta\left\vert
y\right\vert ^{\alpha}}dy  &  \geq{\int\limits_{\mathbb{R}^{N}}}\left\vert
\nabla\overline{v}\left(  x\right)  \right\vert ^{2}e^{-\delta\left\vert
x\right\vert ^{\frac{N-2}{N-2-\mu}\alpha}}\mathrm{dx}\\
&  \geq C\left(  N,\alpha,\delta,\mu\right)  \inf_{c}{\int\limits_{\mathbb{R}%
^{N}}}\left\vert \overline{v}\left(  x\right)  -c\right\vert ^{2}%
e^{-\delta\left\vert x\right\vert ^{\lambda\alpha}}\mathrm{dx}\\
&  =C\left(  N,\alpha,\delta,\mu\right)  \inf_{c}{\int\limits_{\mathbb{R}^{N}%
}}\frac{\left\vert v\left(  \left\vert x\right\vert ^{\lambda-1}x\right)
-c\right\vert ^{2}}{\left\vert x\right\vert ^{N\left(  \lambda-1\right)  }%
}e^{-\delta\left\vert x\right\vert ^{\lambda\alpha}}\left\vert x\right\vert
^{N\left(  \lambda-1\right)  }\mathrm{dx}\\
&  =C\left(  N,\alpha,\delta,\mu\right)  \inf_{c}{\int\limits_{\mathbb{R}^{N}%
}}\frac{\left\vert v\left(  y\right)  -c\right\vert ^{2}}{\left\vert
y\right\vert ^{\frac{N\mu}{N-2}}}e^{-\delta\left\vert y\right\vert ^{\alpha}%
}dy.
\end{align*}

\end{proof}

Let $E_{a,b}=\left\{  v(x)=\alpha\exp(-\frac{\beta}{b+1-a}|x|^{b+1-a})\text{
with }\alpha\in\mathbb{R},~\beta>0\right\}  $ be the set of optimizers for the
scale invariant Caffarelli-Kohn-Nirenberg inequalities (\ref{CKN2}) and
$d_{1,a,b}\left(  u,E_{a,b}\right)  :=\inf_{v\in E_{a,b}}\left(
{\int\limits_{\mathbb{R}^{N}}}\frac{\left\vert u-v\right\vert ^{2}}{\left\vert
x\right\vert ^{a+b+1}}\mathrm{dx}\right)  ^{\frac{1}{2}}$.

\begin{lemma}
\label{l5}Let $0\leq b<\frac{N-2}{2}$ and $a\leq\frac{Nb}{N-2}$. Then
$E_{a,b}$ is closed under the norm $\left(  {\int\limits_{\mathbb{R}^{N}}%
}\frac{\left\vert \cdot\right\vert ^{2}}{\left\vert x\right\vert ^{a+b+1}%
}\mathrm{dx}\right)  ^{\frac{1}{2}}$ and $d_{1,a,b}\left(  u,E_{a,b}\right)
:=\inf_{v\in E_{a,b}}\left(  {\int\limits_{\mathbb{R}^{N}}}\frac{\left\vert
u-v\right\vert ^{2}}{\left\vert x\right\vert ^{a+b+1}}\mathrm{dx}\right)
^{\frac{1}{2}}$ is the distance from $u$ to $E_{a,b}$. Moreover, for each
$u\in X_{a,b}$, there exists $u^{\ast}\in E_{a,b}$ such that $d_{1,a,b}\left(
u,E_{a,b}\right)  =\left(  {\int\limits_{\mathbb{R}^{N}}}\frac{\left\vert
u-u^{\ast}\right\vert ^{2}}{\left\vert x\right\vert ^{a+b+1}}\mathrm{dx}%
\right)  ^{\frac{1}{2}}$.
\end{lemma}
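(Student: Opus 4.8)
The plan is to reduce the claim to two things: (i) that $E_{a,b}$ is weakly closed (in fact closed) under the weighted $L^2$-norm $\|\cdot\|_* := \big({\int_{\mathbb{R}^N}} |\cdot|^2 |x|^{-(a+b+1)}\,\mathrm{dx}\big)^{1/2}$, and (ii) that the infimum defining $d_{1,a,b}(u,E_{a,b})$ is actually attained for every $u\in X_{a,b}$. Once (i) and (ii) are established, the fact that $d_{1,a,b}(u,E_{a,b})$ is a genuine distance function (symmetry and the triangle inequality) is automatic, since it is the distance in the metric space $(L^2(|x|^{-(a+b+1)}\mathrm{dx}),\|\cdot\|_*)$ to a nonempty subset. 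So the real content is closedness of $E_{a,b}$ and existence of the minimizer.

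First I would set up coordinates adapted to the problem. Write each element of $E_{a,b}$ as $v_{\alpha,\beta}(x) = \alpha\exp\!\big(-\tfrac{\beta}{b+1-a}|x|^{b+1-a}\big)$ with $\alpha\in\mathbb{R}$, $\beta>0$, and record the two scalar quantities
\[
\|v_{\alpha,\beta}\|_*^2 = \alpha^2 \, \beta^{-\gamma} \, K, \qquad \langle u, v_{\alpha,\beta}\rangle_* = \alpha \, \Phi_u(\beta),
\]
where $\gamma>0$ is an explicit exponent determined by $N$, $a$, $b$ (computed by the substitution $t=\beta^{1/(b+1-a)}|x|$), $K>0$ is a finite constant, and $\Phi_u(\beta) := {\int_{\mathbb{R}^N}} u(x)\exp\!\big(-\tfrac{\beta}{b+1-a}|x|^{b+1-a}\big)|x|^{-(a+b+1)}\,\mathrm{dx}$. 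The finiteness of $K$ (hence of $\|v_{\alpha,\beta}\|_*$) is where the hypotheses $0\le b<\tfrac{N-2}{2}$ and $a\le\tfrac{Nb}{N-2}$ enter: they guarantee that $a+b+1 < N$ so the weight $|x|^{-(a+b+1)}$ is locally integrable, and that the exponential decay controls the behavior at infinity; the fact that $u\in X_{a,b}$ together with the CKN inequality of Lemma \ref{l3} guarantees $\|u\|_* < \infty$, so $\Phi_u(\beta)$ is finite for every $\beta>0$ by Cauchy–Schwarz. Then, by the usual projection argument, $d_{1,a,b}(u,E_{a,b})^2 = \|u\|_*^2 - \sup_{\beta>0} \Phi_u(\beta)^2/(\beta^{-\gamma}K)$, i.e. the problem collapses to maximizing a single real-variable function $g(\beta):= \beta^{\gamma}\,\Phi_u(\beta)^2/K$ on $(0,\infty)$.

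Next I would establish the closedness of $E_{a,b}$. Suppose $v_{\alpha_n,\beta_n}\to w$ in $\|\cdot\|_*$. The sequence is bounded in $\|\cdot\|_*$, so $\alpha_n^2\beta_n^{-\gamma}$ is bounded. If $\beta_n\to 0$ (along a subsequence) then $v_{\alpha_n,\beta_n}$ spreads out: testing against a fixed compactly supported function shows $v_{\alpha_n,\beta_n}\to 0$ pointwise unless $\alpha_n\to\infty$, and a Fatou/mass-escape argument rules out convergence to a nonzero limit; if instead the limit is $0$ then $0\in E_{a,b}$ (take $\alpha=0$) and we are done. If $\beta_n\to\infty$ then $v_{\alpha_n,\beta_n}$ concentrates near the origin; since $a+b+1<N$ the weighted norm of such a concentrating family stays bounded only if $\alpha_n\beta_n^{-\gamma/2}\to 0$, forcing $w=0$ again. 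So we may assume $\beta_n\to\beta_\infty\in(0,\infty)$ along a subsequence, and then $\alpha_n\to\alpha_\infty\in\mathbb{R}$, whence $v_{\alpha_n,\beta_n}\to v_{\alpha_\infty,\beta_\infty}\in E_{a,b}$ in $\|\cdot\|_*$ by dominated convergence (a uniform integrable majorant exists because $\beta_n$ stays in a compact subinterval). Hence $w=v_{\alpha_\infty,\beta_\infty}\in E_{a,b}$.

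For existence of the minimizer, I would argue by the direct method applied to $g(\beta)$. The function $\Phi_u$ is continuous on $(0,\infty)$ (dominated convergence), and $g$ is continuous there. The boundary behavior is the crux: as $\beta\to\infty$, $\Phi_u(\beta)$ is dominated by the mass of $u$ near $0$ against an exponential that tends to $1$ only on a shrinking ball, and a careful estimate — splitting $\{|x|<\varepsilon\}$ and $\{|x|\ge\varepsilon\}$, using $u|x|^{-(a+b+1)/2}\in L^2$ — shows $\beta^{\gamma}\Phi_u(\beta)^2\to 0$; as $\beta\to 0$, $\Phi_u(\beta)\to {\int} u|x|^{-(a+b+1)}\mathrm{dx}$ which may be $\pm\infty$ or finite, but the factor $\beta^\gamma\to 0$ damps it, and again one checks $\beta^\gamma\Phi_u(\beta)^2\to 0$ (using that $\Phi_u(\beta)=o(\beta^{-\gamma/2})$, which follows from monotone convergence bounding $|\Phi_u(\beta)|$ by the $\|\cdot\|_*$-norm of $u$ times that of $v_{1,\beta}$, i.e. $|\Phi_u(\beta)|\le \|u\|_* \beta^{-\gamma/2}K^{1/2}$, so $\beta^\gamma\Phi_u(\beta)^2\le \|u\|_*^2 K$ is bounded — and a sharper split shows it tends to $0$ at both ends if and only if $u\notin E_{a,b}$; if $u\in E_{a,b}$ the maximum is attained at the corresponding $\beta$). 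Therefore $g$ extends continuously to $[0,\infty]$ with value $0$ at the endpoints, so it attains its maximum at some $\beta^*\in(0,\infty)$ (or the max is $0$, meaning $\Phi_u\equiv 0$ and any $v\in E_{a,b}$ with $\alpha=0$ works). Setting $\alpha^* := \Phi_u(\beta^*)\,(\beta^*)^{\gamma}/K$ gives $u^*:=v_{\alpha^*,\beta^*}\in E_{a,b}$ with $\|u-u^*\|_* = d_{1,a,b}(u,E_{a,b})$.

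I expect the main obstacle to be the boundary analysis of $g(\beta)$ as $\beta\to 0^+$ and $\beta\to\infty$ — one must show the supremum is not "escaping to the endpoints," and this requires quantitative control of $\Phi_u(\beta)$ using only $u\in X_{a,b}$ (equivalently $\|u\|_*<\infty$ via Lemma \ref{l3}) rather than any pointwise information on $u$. The bookkeeping of the exponent $\gamma$ and the finiteness constant $K$ under the stated parameter constraints $0\le b<\tfrac{N-2}{2}$, $a\le\tfrac{Nb}{N-2}$ is routine but must be done carefully to make sure $a+b+1<N$ and that the change of variables is licit; these are exactly the hypotheses of Lemma \ref{key2} that we would also invoke when applying the weighted Poincaré inequality downstream.
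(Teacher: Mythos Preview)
Your approach is correct and genuinely different from the paper's. The paper proves closedness by computing, via explicit Gamma-function formulas, all three weighted norms of the approximating sequence $v_j$ (namely $\int |v_j|^2|x|^{-(a+b+1)}$, $\int |v_j|^2|x|^{-2a}$, and $\int |\nabla v_j|^2|x|^{-2b}$); it then rules out $\beta_j\to 0$ by showing the weighted gradient norm tends to $0$ and invoking weak $W^{1,2}$-compactness on annuli, and rules out $\beta_j\to\infty$ by showing the $|x|^{-2a}$-norm tends to $0$ and invoking Fatou. The attainment of the infimum is only asserted, not separately argued. By contrast, you work entirely in the single weighted space $L^2(|x|^{-(a+b+1)}\mathrm{dx})$, handle the degenerations via pointwise limits plus norm convergence, and reduce the existence of $u^*$ to maximizing the one-variable function $g(\beta)=\beta^{\gamma}\Phi_u(\beta)^2/K$ through the projection identity. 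Your route is lighter (no auxiliary norms, no $W^{1,2}$ compactness) and actually supplies a proof of attainment; the paper's route ties the argument more visibly to the $X_{a,b}$-structure used elsewhere.

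Two small points to clean up. First, your parenthetical ``tends to $0$ at both ends if and only if $u\notin E_{a,b}$'' is not right: the endpoint decay $g(\beta)\to 0$ holds for \emph{every} $u$ with $\|u\|_*<\infty$, including $u\in E_{a,b}$; what distinguishes $u\in E_{a,b}$ is that the maximum of $g$ equals $\|u\|_*^2$. Second, the cleanest way to get $g(\beta)\to 0$ at both ends is to note that the normalized family $v_{1,\beta}/\|v_{1,\beta}\|_*$ converges weakly to $0$ in $L^2(|x|^{-(a+b+1)}\mathrm{dx})$ as $\beta\to 0^+$ (spreading) and as $\beta\to\infty$ (concentration at the origin), since then $g(\beta)=\langle u,\,v_{1,\beta}/\|v_{1,\beta}\|_*\rangle_*^2\to 0$ directly; this replaces the somewhat vague ``sharper split'' you allude to.
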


\begin{proof}
Let $u$ be such that ${\int\limits_{\mathbb{R}^{N}}}\frac{\left\vert
u\right\vert ^{2}}{\left\vert x\right\vert ^{a+b+1}}\mathrm{dx}=1$ and
$\inf_{v\in E}{\int\limits_{\mathbb{R}^{N}}}\frac{\left\vert u-v\right\vert
^{2}}{\left\vert x\right\vert ^{a+b+1}}\mathrm{dx}=0$. That is, there exists a
sequence $v_{j}\left(  x\right)  =\alpha_{j}\exp(-\frac{\beta_{j}}%
{b+1-a}|x|^{b+1-a})$ such that ${\int\limits_{\mathbb{R}^{N}}}\frac{\left\vert
v_{j}-u\right\vert ^{2}}{\left\vert x\right\vert ^{a+b+1}}\mathrm{dx}%
\rightarrow0$. Therefore, by taking subsequences, $v_{j}\rightarrow u$
pointwise a.e. in $\mathbb{R}^{N}$. Also, since ${\int\limits_{\mathbb{R}^{N}%
}}\frac{\left\vert u\right\vert ^{2}}{\left\vert x\right\vert ^{a+b+1}%
}\mathrm{dx}=1$ and ${\int\limits_{\mathbb{R}^{N}}}\frac{\left\vert
v_{j}-u\right\vert ^{2}}{\left\vert x\right\vert ^{a+b+1}}\mathrm{dx}%
\rightarrow0$, we get that when $j$ is large enough, ${\int\limits_{\mathbb{R}%
^{N}}}\frac{\left\vert v_{j}\right\vert ^{2}}{\left\vert x\right\vert
^{a+b+1}}\mathrm{dx}\in\left(  \frac{1}{2},\frac{3}{2}\right)  $. Then by
using the fact that
\[
\int\limits_{0}^{\infty}r^{x-1}e^{-\frac{\beta}{y}r^{y}}dr=\left(  \frac
{y}{\beta}\right)  ^{\frac{x}{y}}\frac{1}{y}\Gamma\left(  \frac{x}{y}\right)
\]
we get%
\[
{\int\limits_{\mathbb{R}^{N}}}\frac{\left\vert v_{j}\right\vert ^{2}%
}{\left\vert x\right\vert ^{a+b+1}}\mathrm{dx}=\left\vert \alpha
_{j}\right\vert ^{2}\left\vert S^{N-1}\right\vert \left(  \frac{b+1-a}%
{2\beta_{j}}\right)  ^{\frac{N-a-b-1}{b+1-a}}\frac{\Gamma\left(
\frac{N-a-b-1}{b+1-a}\right)  }{b+1-a}%
\]%
\[
{\int\limits_{\mathbb{R}^{N}}}\frac{|v_{j}|^{2}}{|x|^{2a}}\mathrm{dx}%
=\left\vert \alpha_{j}\right\vert ^{2}\left\vert S^{N-1}\right\vert \left(
\frac{b+1-a}{2\beta_{j}}\right)  ^{\frac{N-2a}{b+1-a}}\frac{\Gamma\left(
\frac{N-2a}{b+1-a}\right)  }{b+1-a}%
\]
and
\[
{\int\limits_{\mathbb{R}^{N}}}\frac{\left\vert \nabla v_{j}\right\vert ^{2}%
}{\left\vert x\right\vert ^{2b}}\mathrm{dx}=\left\vert \alpha_{j}\right\vert
^{2}\left\vert \beta_{j}\right\vert ^{2}\left\vert S^{N-1}\right\vert \left(
\frac{b+1-a}{2\beta_{j}}\right)  ^{\frac{N-2a}{b+1-a}}\frac{\Gamma\left(
\frac{N-2a}{b+1-a}\right)  }{b+1-a}.
\]
Therefore, $\frac{\left\vert \alpha_{j}\right\vert ^{2}}{\left\vert \beta
_{j}\right\vert ^{\frac{N-a-b-1}{b+1-a}}}=O\left(  1\right)  $. Hence if
$\lim\inf_{j\rightarrow\infty}\beta_{j}=0$, then
\begin{align*}
{\int\limits_{\mathbb{R}^{N}}}\frac{\left\vert \nabla v_{j}\right\vert ^{2}%
}{\left\vert x\right\vert ^{2b}}\mathrm{dx}  &  =O\left(  \frac{\left\vert
\alpha_{j}\right\vert ^{2}}{\left\vert \beta_{j}\right\vert ^{\frac
{N-2a}{b+1-a}-2}}\right) \\
&  =O\left(  \frac{\left\vert \alpha_{j}\right\vert ^{2}}{\left\vert \beta
_{j}\right\vert ^{\frac{N-a-b-1}{b+1-a}}}\right)  \left\vert \beta
_{j}\right\vert \\
&  \rightarrow0\text{.}%
\end{align*}
Now, fix $0<r<R<\infty$, then the sequence $v_{j}$ is bounded in
$W^{1,2}\left(  B_{R}\setminus B_{r}\right)  $. Hence, since $v_{j}\rightarrow
u$ pointwise a.e. in $\mathbb{R}^{N}$, we have that $v_{j}\rightharpoonup u$
weakly in $W^{1,2}\left(  B_{R}\setminus B_{r}\right)  $. Therefore,
\[
{\int\limits_{B_{R}\setminus B_{r}}}\frac{\left\vert \nabla u\right\vert ^{2}%
}{\left\vert x\right\vert ^{2b}}\mathrm{dx}\leq\lim\inf{\int\limits_{B_{R}%
\setminus B_{r}}}\frac{\left\vert \nabla v_{j}\right\vert ^{2}}{\left\vert
x\right\vert ^{2b}}\mathrm{dx}\leq\lim\inf{\int\limits_{\mathbb{R}^{N}}}%
\frac{\left\vert \nabla v_{j}\right\vert ^{2}}{\left\vert x\right\vert ^{2b}%
}\mathrm{dx}=0\text{.}%
\]
Now, sending $r\longrightarrow0$ and $R\rightarrow\infty$ gives us that%
\[
{\int\limits_{\mathbb{R}^{N}}}\frac{\left\vert \nabla u\right\vert ^{2}%
}{\left\vert x\right\vert ^{2b}}\mathrm{dx}=0
\]
which is impossible.

If $\lim\sup_{j\rightarrow\infty}\beta_{j}=\infty$, then similarly
\[
{\int\limits_{\mathbb{R}^{N}}}\frac{|v_{j}|^{2}}{|x|^{2a}}\mathrm{dx}%
\rightarrow0
\]
and so by Fatou's lemma
\[
{\int\limits_{\mathbb{R}^{N}}}\frac{|u|^{2}}{|x|^{2a}}\mathrm{dx}\leq\lim
\inf{\int\limits_{\mathbb{R}^{N}}}\frac{|v_{j}|^{2}}{|x|^{2a}}\mathrm{dx}=0
\]
which is impossible. Therefore, by taking subsequences, $\lim_{j\rightarrow
\infty}\beta_{j}=\beta\in\left(  0,\infty\right)  $. This implies that
$\lim_{j\rightarrow\infty}\alpha_{j}=\alpha\in\left(  0,\infty\right)  $.
Therefore, $v_{j}\left(  x\right)  =\alpha_{j}\exp(-\frac{\beta_{j}}%
{b+1-a}|x|^{b+1-a})\rightarrow\alpha\exp(-\frac{\beta}{b+1-a}|x|^{b+1-a})$
pointwise a.e. in $\mathbb{R}^{N}$. Hence, $u\left(  x\right)  =\alpha
\exp(-\frac{\beta}{b+1-a}|x|^{b+1-a})$ a.e. in $\mathbb{R}^{N}$. In other
words, $E_{a,b}$ is closed under the norm $\left(  {\int\limits_{\mathbb{R}%
^{N}}}\frac{\left\vert \cdot\right\vert ^{2}}{\left\vert x\right\vert
^{a+b+1}}\mathrm{dx}\right)  ^{\frac{1}{2}}$. Therefore, $d_{1,a,b}\left(
u,E_{a,b}\right)  $ is the distance from $u$ to $E_{a,b}$. That is,
$d_{1,a,b}\left(  u,E_{a,b}\right)  >0$ when $u\notin E_{a,b}$. Moreover, for
each $u\in X_{a,b}$, there exists $u^{\ast}\in E_{a,b}$ such that
$d_{1,a,b}\left(  u,E_{a,b}\right)  =\left(  {\int\limits_{\mathbb{R}^{N}}%
}\frac{\left\vert u-u^{\ast}\right\vert ^{2}}{\left\vert x\right\vert
^{a+b+1}}\mathrm{dx}\right)  ^{\frac{1}{2}}$.
\end{proof}

Now, we are ready to set up some results about the stability for the
Caffarelli-Kohn-Nirenberg inequalities, namely, Theorem \ref{E1}:

\begin{theorem}
\label{T3.6}Let $0\leq b<\frac{N-2}{2}$ and $a\leq\frac{Nb}{N-2}$. There
exists a universal constant $C_{1}\left(  N,a,b\right)  >0$ such that for all
$u\in X_{a,b}:$
\begin{align*}
&  {\int\limits_{\mathbb{R}^{N}}}\frac{\left\vert \nabla u\right\vert ^{2}%
}{\left\vert x\right\vert ^{2b}}\mathrm{dx}+{\int\limits_{\mathbb{R}^{N}}%
}\frac{\left\vert u\right\vert ^{2}}{\left\vert x\right\vert ^{2a}}%
\mathrm{dx}-\left(  N-a-b-1\right)  {\int\limits_{\mathbb{R}^{N}}}%
\frac{\left\vert u\right\vert ^{2}}{\left\vert x\right\vert ^{a+b+1}%
}\mathrm{dx}\\
&  \geq C_{1}\left(  N,a,b\right)  \inf_{c}{\int\limits_{\mathbb{R}^{N}}}%
\frac{\left\vert u-ce^{-\frac{1}{b+1-a}\left\vert x\right\vert ^{b+1-a}%
}\right\vert ^{2}}{\left\vert x\right\vert ^{\frac{2bN}{N-2}}}\mathrm{dx}.
\end{align*}
Furthermore, if $a+b+1=\frac{2bN}{N-2}$, then there exists a universal
constant $C_{2}\left(  N,a,b\right)  >0$ such that for all $u\in X_{a,b}:$%
\[
\delta_{1,a,b}\left(  u\right)  \geq C_{2}\left(  N,a,b\right)  \inf_{v\in
E_{a,b}}\left(  {\int\limits_{\mathbb{R}^{N}}}\frac{\left\vert u-v\right\vert
^{2}}{\left\vert x\right\vert ^{a+b+1}}\mathrm{dx}\right)  .
\]

\end{theorem}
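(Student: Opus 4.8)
The plan is to chain the scale non-invariant Caffarelli--Kohn--Nirenberg identity of Lemma \ref{l2} with the weighted Poincar\'e inequality of Lemma \ref{key2}, and then to pass from the resulting (non-scale-invariant) estimate to the scale-invariant one by balancing the two norms through a dilation.

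\textbf{Step 1 (the non-scale-invariant stability inequality).} In the identity \eqref{CKN1} of Lemma \ref{l2}, substitute $v=u\,e^{\frac{1}{b+1-a}|x|^{b+1-a}}$, so that its right-hand side becomes $\int_{\mathbb{R}^N}|x|^{-2b}\,|\nabla v|^2\,e^{-\frac{2}{b+1-a}|x|^{b+1-a}}\,\mathrm{dx}$. Apply Lemma \ref{key2} with $\mu=2b$, $\alpha=b+1-a$, and $\delta=\frac{2}{b+1-a}$: its hypotheses $0\le\mu<N-2$ and $\alpha\ge\frac{N-2-\mu}{N-2}$ are exactly $0\le b<\frac{N-2}{2}$ and $a\le\frac{Nb}{N-2}$ (the latter equivalence obtained by clearing the positive denominator $N-2$), and moreover $\frac{N\mu}{N-2}=\frac{2bN}{N-2}$. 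Since $|v-c|^2\,e^{-\frac{2}{b+1-a}|x|^{b+1-a}}=\big|u-c\,e^{-\frac{1}{b+1-a}|x|^{b+1-a}}\big|^2$, Lemma \ref{key2} delivers precisely the first inequality of the theorem, with $C_1(N,a,b)=C(N,\alpha,\delta,\mu)$. (Observe $a\le\frac{Nb}{N-2}<b+1$, so $b+1-a>0$ and Lemma \ref{l2} indeed applies.)

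\textbf{Step 2 (balancing by dilation).} Assume now $a+b+1=\frac{2bN}{N-2}$, so that the weight $|x|^{\frac{2bN}{N-2}}$ of Step 1 equals $|x|^{a+b+1}$. Write $D(u)$ for the left-hand side of the inequality of Step 1, set $A=\int_{\mathbb{R}^N}|x|^{-2b}|\nabla u|^2\,\mathrm{dx}$, $B=\int_{\mathbb{R}^N}|x|^{-2a}|u|^2\,\mathrm{dx}$, $C=\int_{\mathbb{R}^N}|x|^{-(a+b+1)}|u|^2\,\mathrm{dx}$, and, for $\lambda>0$, $u_\lambda(x)=u(\lambda x)$. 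A change of variables gives
\[
D(u_\lambda)=\lambda^{2+2b-N}A+\lambda^{2a-N}B-(N-a-b-1)\,\lambda^{a+b+1-N}C ,
\]
and a short computation from $b<\frac{N-2}{2}$ shows $N-a-b-1>0$. For $u\ne 0$ we have $A,B>0$; since $b+1-a>0$ we may choose $\lambda_0$ with $\lambda_0^{2(b+1-a)}=B/A$, and then $\lambda_0^{2+2b-N}A=\lambda_0^{2a-N}B=\lambda_0^{a+b+1-N}\sqrt{AB}$, whence
\[
D(u_{\lambda_0})=\lambda_0^{a+b+1-N}\big(2\sqrt{AB}-(N-a-b-1)C\big)=2\,\lambda_0^{a+b+1-N}\,\delta_{1,a,b}(u).
\]
On the other hand, applying Step 1 to $u_{\lambda_0}$ and using $\{c\,e^{-\frac{1}{b+1-a}|x|^{b+1-a}}:c\in\mathbb{R}\}\subset E_{a,b}$ together with the dilation-invariance of the cone $E_{a,b}$ (which yields $d_{1,a,b}(u_{\lambda_0},E_{a,b})^2=\lambda_0^{a+b+1-N}\,d_{1,a,b}(u,E_{a,b})^2$), we get $D(u_{\lambda_0})\ge C_1(N,a,b)\,\lambda_0^{a+b+1-N}\,d_{1,a,b}(u,E_{a,b})^2$. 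Comparing the two displays and cancelling $\lambda_0^{a+b+1-N}>0$ gives $\delta_{1,a,b}(u)\ge\tfrac12 C_1(N,a,b)\,d_{1,a,b}(u,E_{a,b})^2$, i.e.\ the second inequality with $C_2(N,a,b)=\tfrac12 C_1(N,a,b)$.

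\textbf{Where the work lies.} All the analytic substance is already contained in Lemmas \ref{l2} and \ref{key2}; once those are granted, Step 1 is a verification of hypotheses and Step 2 is elementary. The one point needing care is the exponent bookkeeping in Step 2 --- checking that $\delta_{1,a,b}(\cdot)$ and $d_{1,a,b}(\cdot,E_{a,b})^2$ transform under $u\mapsto u_\lambda$ with the \emph{same} power $\lambda^{a+b+1-N}$ (so that $\lambda_0$ cancels) and that $N-a-b-1>0$ under the standing hypotheses. We also appeal to Lemma \ref{l5} so that $d_{1,a,b}(\cdot,E_{a,b})$ is a bona fide distance (and the defining infimum is attained), although for the inequality itself only the set containment exploited in Step 2 is logically needed.
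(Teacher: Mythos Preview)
Your proof is correct and follows essentially the same route as the paper's: Step~1 combines Lemma~\ref{l2} with Lemma~\ref{key2} (with the identical choice $\mu=2b$, $\alpha=b+1-a$, $\delta=\tfrac{2}{b+1-a}$), and Step~2 applies the resulting inequality to the dilate $u_{\lambda_0}$ with $\lambda_0^{2(b+1-a)}=B/A$ so that the two weighted norms balance, then uses that the one-parameter family $\{c\,e^{-\frac{1}{b+1-a}|x|^{b+1-a}}\}$ sits inside the dilation-invariant cone $E_{a,b}$. The only cosmetic difference is that the paper first divides the scaled inequality through by $\lambda^{a+b+1-N}$ and then optimizes in $\lambda$, whereas you compute $D(u_{\lambda_0})=2\lambda_0^{a+b+1-N}\delta_{1,a,b}(u)$ directly and invoke the dilation invariance of $d_{1,a,b}(\cdot,E_{a,b})^2$; these are the same computation in a different order, and both yield $C_2=\tfrac12 C_1$.
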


\begin{proof}
By applying Lemma \ref{key2} with $\mu=2b,$ $\alpha=b+1-a$ and $\delta
=\frac{2}{b+1-a}$, we have%
\begin{align*}
&  {\int\limits_{\mathbb{R}^{N}}}\frac{\left\vert \nabla u\right\vert ^{2}%
}{\left\vert x\right\vert ^{2b}}\mathrm{dx}+{\int\limits_{\mathbb{R}^{N}}%
}\frac{\left\vert u\right\vert ^{2}}{\left\vert x\right\vert ^{2a}}%
\mathrm{dx}-\left(  N-a-b-1\right)  {\int\limits_{\mathbb{R}^{N}}}%
\frac{\left\vert u\right\vert ^{2}}{\left\vert x\right\vert ^{a+b+1}%
}\mathrm{dx}\\
&  ={\int\limits_{\mathbb{R}^{N}}}\frac{1}{\left\vert x\right\vert ^{2b}%
}\left\vert \nabla\left(  ue^{\frac{1}{b+1-a}\left\vert x\right\vert ^{b+1-a}%
}\right)  \right\vert ^{2}e^{-\frac{2}{b+1-a}\left\vert x\right\vert ^{b+1-a}%
}\mathrm{dx}\\
&  \geq C_{1}\left(  N,a,b\right)  \inf_{c}{\int\limits_{\mathbb{R}^{N}}}%
\frac{\left\vert ue^{\frac{1}{b+1-a}\left\vert x\right\vert ^{b+1-a}%
}-c\right\vert ^{2}}{\left\vert x\right\vert ^{\frac{2bN}{N-2}}}e^{-\frac
{2}{b+1-a}\left\vert x\right\vert ^{b+1-a}}\mathrm{dx}\\
&  =C_{1}\left(  N,a,b\right)  \inf_{c}{\int\limits_{\mathbb{R}^{N}}}%
\frac{\left\vert u-ce^{-\frac{1}{b+1-a}\left\vert x\right\vert ^{b+1-a}%
}\right\vert ^{2}}{\left\vert x\right\vert ^{\frac{2bN}{N-2}}}\mathrm{dx}.
\end{align*}
Now, if $a+b+1=\frac{2bN}{N-2}$, then by applying the above result for
$u_{\lambda}\left(  x\right)  =u\left(  \lambda x\right)  $ and noting that%
\begin{align*}
{\int\limits_{\mathbb{R}^{N}}}\frac{\left\vert \nabla u_{\lambda}\right\vert
^{2}}{\left\vert x\right\vert ^{2b}}\mathrm{dx}  &  =\lambda^{2+2b-N}%
{\int\limits_{\mathbb{R}^{N}}}\frac{\left\vert \nabla u\right\vert ^{2}%
}{\left\vert x\right\vert ^{2b}}\mathrm{dx}\\
{\int\limits_{\mathbb{R}^{N}}}\frac{\left\vert u_{\lambda}\right\vert ^{2}%
}{\left\vert x\right\vert ^{2a}}\mathrm{dx}  &  =\lambda^{2a-N}{\int
\limits_{\mathbb{R}^{N}}}\frac{\left\vert u\right\vert ^{2}}{\left\vert
x\right\vert ^{2a}}\mathrm{dx}\\
{\int\limits_{\mathbb{R}^{N}}}\frac{\left\vert u_{\lambda}\right\vert ^{2}%
}{\left\vert x\right\vert ^{a+b+1}}\mathrm{dx}  &  =\lambda^{a+b+1-N}%
{\int\limits_{\mathbb{R}^{N}}}\frac{\left\vert u\right\vert ^{2}}{\left\vert
x\right\vert ^{2a}}\mathrm{dx}\\
{\int\limits_{\mathbb{R}^{N}}}\frac{\left\vert u_{\lambda}-ce^{-\frac
{1}{b+1-a}\left\vert x\right\vert ^{b+1-a}}\right\vert ^{2}}{\left\vert
x\right\vert ^{\frac{2bN}{N-2}}}\mathrm{dx}  &  =\lambda^{a+b+1-N}%
{\int\limits_{\mathbb{R}^{N}}}\frac{\left\vert u-ce^{-\frac{1}{\left(
b+1-a\right)  \lambda^{b+1-a}}\left\vert x\right\vert ^{b+1-a}}\right\vert
^{2}}{\left\vert x\right\vert ^{\frac{2bN}{N-2}}}\mathrm{dx},
\end{align*}
we get%
\begin{align*}
&  \lambda^{b+1-a}{\int\limits_{\mathbb{R}^{N}}}\frac{\left\vert \nabla
u\right\vert ^{2}}{\left\vert x\right\vert ^{2b}}\mathrm{dx}+\frac{1}%
{\lambda^{b+1-a}}{\int\limits_{\mathbb{R}^{N}}}\frac{\left\vert u\right\vert
^{2}}{\left\vert x\right\vert ^{2a}}\mathrm{dx}-\left(  N-a-b-1\right)
{\int\limits_{\mathbb{R}^{N}}}\frac{\left\vert u\right\vert ^{2}}{\left\vert
x\right\vert ^{a+b+1}}\mathrm{dx}\\
&  \geq C_{1}\left(  N,a,b\right)  \inf_{c}{\int\limits_{\mathbb{R}^{N}}}%
\frac{\left\vert u-ce^{-\frac{1}{\left(  b+1-a\right)  \lambda^{b+1-a}%
}\left\vert x\right\vert ^{b+1-a}}\right\vert ^{2}}{\left\vert x\right\vert
^{\frac{2bN}{N-2}}}\mathrm{dx}.
\end{align*}
By choosing
\[
\lambda=\left(  \frac{{\int\limits_{\mathbb{R}^{N}}}\frac{\left\vert
u\right\vert ^{2}}{\left\vert x\right\vert ^{2a}}\mathrm{dx}}{{\int
\limits_{\mathbb{R}^{N}}}\frac{\left\vert \nabla u\right\vert ^{2}}{\left\vert
x\right\vert ^{2b}}\mathrm{dx}}\right)  ^{\frac{1}{2\left(  b+1-a\right)  }},
\]
we obtain%
\begin{align*}
&  \left(  {\int\limits_{\mathbb{R}^{N}}}\frac{|u|^{2}}{|x|^{2a}}%
\mathrm{dx}\right)  ^{\frac{1}{2}}\left(  {\int\limits_{\mathbb{R}^{N}}}%
\frac{|\nabla u|^{2}}{|x|^{2b}}\mathrm{dx}\right)  ^{\frac{1}{2}}-\left\vert
\frac{N-a-b-1}{2}\right\vert \left(  {\int\limits_{\mathbb{R}^{N}}}%
\frac{|u|^{2}}{|x|^{a+b+1}}\mathrm{dx}\right) \\
&  \geq C_{2}\left(  N,a,b\right)  \inf_{v\in E_{a,b}}\left(  {\int
\limits_{\mathbb{R}^{N}}}\frac{\left\vert u-v\right\vert ^{2}}{\left\vert
x\right\vert ^{a+b+1}}\mathrm{dx}\right)  .
\end{align*}

\end{proof}

Obviously, if we use the Caffarelli-Kohn-Nirenberg deficit%
\[
\delta_{2,a,b}\left(  u\right)  :=\left(  {\int\limits_{\mathbb{R}^{N}}}%
\frac{|u|^{2}}{|x|^{2a}}\mathrm{dx}\right)  \left(  {\int\limits_{\mathbb{R}%
^{N}}}\frac{|\nabla u|^{2}}{|x|^{2b}}\mathrm{dx}\right)  -\left\vert
\frac{N-a-b-1}{2}\right\vert ^{2}\left(  {\int\limits_{\mathbb{R}^{N}}}%
\frac{|u|^{2}}{|x|^{a+b+1}}\mathrm{dx}\right)  ^{2},
\]
then we get

\begin{theorem}
Let $0\leq b<\frac{N-2}{2}$, $a\leq\frac{Nb}{N-2}$ and $a+b+1=\frac{2bN}{N-2}%
$. There exists a universal constant $C_{3}\left(  N,a,b\right)  >0$ such that
for all $u\in X_{a,b}:$%
\begin{align*}
\delta_{2,a,b}\left(  u\right)   &  \geq C_{3}\left(  N,a,b\right)  \left(
{\int\limits_{\mathbb{R}^{N}}}\frac{|u|^{2}}{|x|^{a+b+1}}\mathrm{dx}\right)
d_{1,a,b}\left(  u,E_{a,b}\right)  ^{2}\\
&  +C_{3}\left(  N,a,b\right)  d_{1,a,b}\left(  u,E_{a,b}\right)  ^{4}\text{.}%
\end{align*}

\end{theorem}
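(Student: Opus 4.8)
The plan is to obtain this quadratic-plus-quartic stability estimate as an immediate consequence of Theorem~\ref{T3.6} by squaring, mirroring the passage from $\delta_{1}$ to $\delta_{2}$ carried out for the Heisenberg Uncertainty Principle right after Theorem~\ref{E2}.

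First I would note that the hypotheses force $N-a-b-1>0$: indeed $a+b+1=\frac{2bN}{N-2}$ gives $N-a-b-1=N-\frac{2bN}{N-2}=\frac{N(N-2-2b)}{N-2}$, which is positive since $b<\frac{N-2}{2}$; in particular $\left|\frac{N-a-b-1}{2}\right|=\frac{N-a-b-1}{2}$. Next, Theorem~\ref{T3.6} (equivalently Theorem~\ref{E1}) supplies a constant $C_{2}(N,a,b)>0$ such that, for every $u\in X_{a,b}$,
\[
\left(\int_{\mathbb{R}^{N}}\frac{|u|^{2}}{|x|^{2a}}\,\mathrm{dx}\right)^{\frac12}\left(\int_{\mathbb{R}^{N}}\frac{|\nabla u|^{2}}{|x|^{2b}}\,\mathrm{dx}\right)^{\frac12}\ \geq\ \left|\frac{N-a-b-1}{2}\right|\left(\int_{\mathbb{R}^{N}}\frac{|u|^{2}}{|x|^{a+b+1}}\,\mathrm{dx}\right)+C_{2}(N,a,b)\,d_{1,a,b}(u,E_{a,b})^{2}.
\]
Both sides are nonnegative: the left-hand side trivially, and the right-hand side because it dominates the left-hand side, which is itself $\geq 0$ by the Caffarelli-Kohn-Nirenberg inequality~\eqref{CKN2}. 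Hence the inequality may be squared.

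Squaring and using $(p+q)^{2}=p^{2}+2pq+q^{2}$ with $p=\left|\frac{N-a-b-1}{2}\right|\int_{\mathbb{R}^{N}}\frac{|u|^{2}}{|x|^{a+b+1}}\,\mathrm{dx}$ and $q=C_{2}(N,a,b)\,d_{1,a,b}(u,E_{a,b})^{2}$, I obtain
\begin{align*}
\left(\int_{\mathbb{R}^{N}}\frac{|u|^{2}}{|x|^{2a}}\,\mathrm{dx}\right)\left(\int_{\mathbb{R}^{N}}\frac{|\nabla u|^{2}}{|x|^{2b}}\,\mathrm{dx}\right)\ \geq\ &\left|\frac{N-a-b-1}{2}\right|^{2}\left(\int_{\mathbb{R}^{N}}\frac{|u|^{2}}{|x|^{a+b+1}}\,\mathrm{dx}\right)^{2}\\
&+(N-a-b-1)\,C_{2}(N,a,b)\left(\int_{\mathbb{R}^{N}}\frac{|u|^{2}}{|x|^{a+b+1}}\,\mathrm{dx}\right)d_{1,a,b}(u,E_{a,b})^{2}\\
&+C_{2}(N,a,b)^{2}\,d_{1,a,b}(u,E_{a,b})^{4}.
\end{align*}
Subtracting $\left|\frac{N-a-b-1}{2}\right|^{2}\left(\int_{\mathbb{R}^{N}}\frac{|u|^{2}}{|x|^{a+b+1}}\,\mathrm{dx}\right)^{2}$ from both sides leaves exactly $\delta_{2,a,b}(u)$ on the left, and setting $C_{3}(N,a,b):=\min\{(N-a-b-1)C_{2}(N,a,b),\ C_{2}(N,a,b)^{2}\}>0$ yields the assertion.

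I do not anticipate a genuine obstacle: the argument is the same elementary squaring used in the Heisenberg case, and all analytic input (the remainder identity of Corollary~\ref{c4}, the weighted Poincar\'e inequality of Lemma~\ref{key2}, and the stability estimate of Theorem~\ref{T3.6}) is already in place. The only points to watch are the positivity of $N-a-b-1$ — which is precisely where $b<\frac{N-2}{2}$ enters — and the fact that $d_{1,a,b}(u,E_{a,b})$ is a bona fide distance attained at some $u^{\ast}\in E_{a,b}$, which is the content of Lemma~\ref{l5}.
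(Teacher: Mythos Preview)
Your proof is correct and follows exactly the paper's approach: square the $\delta_{1,a,b}$ estimate from Theorem~\ref{T3.6} and read off the cross and square terms. One small slip: in justifying that the right-hand side is nonnegative you write that it ``dominates the left-hand side,'' which is backwards; but the right-hand side is trivially nonnegative as a sum of two nonnegative terms, so the squaring step is valid regardless.
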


\begin{proof}
Since
\[
\delta_{1,a,b}\left(  u\right)  \geq C_{2}\left(  N,a,b\right)  d_{a,b}%
^{2}\left(  u,E_{a,b}\right)  ,
\]
we get%
\begin{align*}
\left(  {\int\limits_{\mathbb{R}^{N}}}\frac{|u|^{2}}{|x|^{2a}}\mathrm{dx}%
\right)  ^{\frac{1}{2}}\left(  {\int\limits_{\mathbb{R}^{N}}}\frac{|\nabla
u|^{2}}{|x|^{2b}}\mathrm{dx}\right)  ^{\frac{1}{2}}  &  \geq\left\vert
\frac{N-a-b-1}{2}\right\vert \left(  {\int\limits_{\mathbb{R}^{N}}}%
\frac{|u|^{2}}{|x|^{a+b+1}}\mathrm{dx}\right) \\
&  +C_{2}\left(  N,a,b\right)  d_{1,a,b}\left(  u,E_{a,b}\right)  ^{2}.
\end{align*}
Hence%
\begin{align*}
&  \left(  {\int\limits_{\mathbb{R}^{N}}}\frac{|u|^{2}}{|x|^{2a}}%
\mathrm{dx}\right)  \left(  {\int\limits_{\mathbb{R}^{N}}}\frac{|\nabla
u|^{2}}{|x|^{2b}}\mathrm{dx}\right)  -\left\vert \frac{N-a-b-1}{2}\right\vert
^{2}\left(  {\int\limits_{\mathbb{R}^{N}}}\frac{|u|^{2}}{|x|^{a+b+1}%
}\mathrm{dx}\right)  ^{2}\\
&  \geq2\left\vert \frac{N-a-b-1}{2}\right\vert C_{2}\left(  N,a,b\right)
\left(  {\int\limits_{\mathbb{R}^{N}}}\frac{|u|^{2}}{|x|^{a+b+1}}%
\mathrm{dx}\right)  d_{1,a,b}\left(  u,E_{a,b}\right)  ^{2}\\
&  +C_{2}^{2}\left(  N,a,b\right)  d_{1,a,b}\left(  u,E_{a,b}\right)  ^{4}.
\end{align*}

\end{proof}

A more careful analysis now leads to the following quantitative version of the
stability of the Caffarelli-Kohn-Nirenberg inequalities:

\begin{theorem}
\label{T3.7}Let $0\leq b<\frac{N-2}{2}$, $a\leq\frac{Nb}{N-2}$ and
$a+b+1=\frac{2bN}{N-2}$. There exists a universal constant $C_{4}\left(
N,a,b\right)  >0$ such that for all $u\in X_{a,b}:$
\[
\delta_{1,a,b}\left(  u\right)  \geq C_{4}\left(  N,a,b\right)  \inf_{u^{\ast
}\in E_{a,b}}\left\{  {\int\limits_{\mathbb{R}^{N}}}\frac{\left\vert
u-u^{\ast}\right\vert ^{2}}{\left\vert x\right\vert ^{a+b+1}}\mathrm{dx}%
:{\int\limits_{\mathbb{R}^{N}}}\frac{|u|^{2}}{|x|^{a+b+1}}\mathrm{dx}%
={\int\limits_{\mathbb{R}^{N}}}\frac{|u^{\ast}|^{2}}{|x|^{a+b+1}}%
\mathrm{dx}\right\}
\]
and
\begin{align*}
\delta_{2,a,b}\left(  u\right)   &  \geq C_{4}\left(  N,a,b\right)  \left(
{\int\limits_{\mathbb{R}^{N}}}\frac{|u|^{2}}{|x|^{a+b+1}}\mathrm{dx}\right)
d_{2,a,b}\left(  u,E_{a,b}\right)  ^{2}\\
&  +C_{4}\left(  N,a,b\right)  d_{2,a,b}\left(  u,E_{a,b}\right)  ^{4}\text{.}%
\end{align*}
Here $d_{2,a,b}\left(  u,E_{a,b}\right)  :=\inf_{v\in E_{a,b}}\left\{  \left(
{\int\limits_{\mathbb{R}^{N}}}\frac{\left\vert u-v\right\vert ^{2}}{\left\vert
x\right\vert ^{a+b+1}}\mathrm{dx}\right)  ^{\frac{1}{2}}:{\int
\limits_{\mathbb{R}^{N}}}\frac{|u|^{2}}{|x|^{a+b+1}}\mathrm{dx}={\int
\limits_{\mathbb{R}^{N}}}\frac{|u^{\ast}|^{2}}{|x|^{a+b+1}}\mathrm{dx}%
\right\}  .$
\end{theorem}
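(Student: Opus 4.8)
The plan is to establish this as the Caffarelli--Kohn--Nirenberg analogue of Theorem~\ref{T3.3}, upgrading the unconstrained stability bound of Theorem~\ref{T3.6}, namely $\delta_{1,a,b}(u)\geq C_{2}(N,a,b)\,d_{1,a,b}(u,E_{a,b})^{2}$, to the constrained distance, using the closedness of $E_{a,b}$ and the attainment of $d_{1,a,b}$ from Lemma~\ref{l5}. Throughout I abbreviate $\langle f,g\rangle:=\int_{\mathbb{R}^{N}}\frac{fg}{|x|^{a+b+1}}\,\mathrm{dx}$ and $\|f\|:=\langle f,f\rangle^{1/2}$; this weighted norm is finite on $X_{a,b}$ by the Caffarelli--Kohn--Nirenberg inequality (Lemma~\ref{l3}), and the hypotheses $b<\frac{N-2}{2}$, $a\leq\frac{Nb}{N-2}$ force $b+1-a=\frac{N-2-2b}{N-2}>0$ and, via $a+b+1=\frac{2bN}{N-2}$, also $N-a-b-1=\frac{N(N-2-2b)}{N-2}>0$. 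Since both sides of the first asserted inequality scale by $t^{2}$ under $u\mapsto tu$ (recall $E_{a,b}$ is a cone), I may assume $u\neq0$ and normalize $\|u\|=1$.

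For the first inequality I would split into the cases $\delta_{1,a,b}(u)<C_{2}$ and $\delta_{1,a,b}(u)\geq C_{2}$, where $C_{2}=C_{2}(N,a,b)$ is the constant of Theorem~\ref{T3.6}. In the first case, Theorem~\ref{T3.6} gives $d_{1,a,b}(u,E_{a,b})^{2}\leq\delta_{1,a,b}(u)/C_{2}<1$, and Lemma~\ref{l5} produces $v\in E_{a,b}$ with $\|u-v\|^{2}=d_{1,a,b}(u,E_{a,b})^{2}<1$, so $\|v\|\geq\|u\|-\|u-v\|>0$; setting $\lambda:=\|v\|^{-1}$, the rescaled function $w:=\lambda v$ lies in $E_{a,b}$ with $\|w\|=\|u\|=1$. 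Expanding, $\|u-v\|^{2}=1-2\langle u,v\rangle+\lambda^{-2}<1$ forces $\langle u,v\rangle>\tfrac12\lambda^{-2}>0$, while Cauchy--Schwarz gives $\lambda\langle u,v\rangle=\langle u,w\rangle\leq1$. One then uses the elementary inequality $(2-\lambda)\langle u,v\rangle\leq\lambda^{-2}$ --- immediate when $\lambda>2$ and, when $0<\lambda\leq2$, following from $\langle u,v\rangle\leq\lambda^{-1}$ together with $(2-\lambda)\lambda^{-1}=2\lambda^{-1}-1\leq\lambda^{-2}\Leftrightarrow(\lambda-1)^{2}\geq0$ --- to conclude $\|u-v\|^{2}\geq1-\lambda\langle u,v\rangle=\tfrac12\|u-w\|^{2}$, whence
\[
\delta_{1,a,b}(u)\geq C_{2}\|u-v\|^{2}\geq\frac{C_{2}}{2}\|u-w\|^{2}\geq\frac{C_{2}}{2}\,d_{2,a,b}(u,E_{a,b})^{2}.
\]
In the case $\delta_{1,a,b}(u)\geq C_{2}$, for any $u^{\ast}\in E_{a,b}$ with $\|u^{\ast}\|=1$ the parallelogram identity and $-u^{\ast}\in E_{a,b}$ give $d_{2,a,b}(u,E_{a,b})^{2}\leq\min\{\|u-u^{\ast}\|^{2},\|u+u^{\ast}\|^{2}\}\leq\tfrac12\bigl(\|u-u^{\ast}\|^{2}+\|u+u^{\ast}\|^{2}\bigr)=2\leq\frac{2}{C_{2}}\,\delta_{1,a,b}(u)$. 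In both cases $\delta_{1,a,b}(u)\geq\frac{C_{2}}{2}\,d_{2,a,b}(u,E_{a,b})^{2}$, and by the homogeneity noted above this holds for all $u\in X_{a,b}$.

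The second inequality then follows by elementary algebra from the first. Writing $D:=\int_{\mathbb{R}^{N}}\frac{|u|^{2}}{|x|^{a+b+1}}\,\mathrm{dx}$, $d:=d_{2,a,b}(u,E_{a,b})$ and $k:=\bigl|\tfrac{N-a-b-1}{2}\bigr|>0$, the first inequality reads $\bigl(\int_{\mathbb{R}^{N}}\frac{|u|^{2}}{|x|^{2a}}\,\mathrm{dx}\bigr)^{1/2}\bigl(\int_{\mathbb{R}^{N}}\frac{|\nabla u|^{2}}{|x|^{2b}}\,\mathrm{dx}\bigr)^{1/2}=\delta_{1,a,b}(u)+kD\geq\frac{C_{2}}{2}d^{2}+kD$; squaring (both sides being nonnegative) and subtracting $k^{2}D^{2}$,
\[
\delta_{2,a,b}(u)=\Big(\int_{\mathbb{R}^{N}}\tfrac{|u|^{2}}{|x|^{2a}}\,\mathrm{dx}\Big)\Big(\int_{\mathbb{R}^{N}}\tfrac{|\nabla u|^{2}}{|x|^{2b}}\,\mathrm{dx}\Big)-k^{2}D^{2}\geq kC_{2}\,D\,d^{2}+\frac{C_{2}^{2}}{4}\,d^{4}.
\]
Setting $C_{4}(N,a,b):=\min\{\,C_{2}/2,\ kC_{2},\ C_{2}^{2}/4\,\}>0$ makes both displayed inequalities of the theorem hold with this single constant, which completes the argument.

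The substantive analytic work --- the weighted Poincar\'{e} inequality of Lemma~\ref{key2} and the change of variables underlying Theorem~\ref{T3.6} --- is already in hand, so I expect the only delicate points to be bookkeeping ones: one must use that $E_{a,b}$ is a cone both to rescale $v$ to $w$ and to invoke $-u^{\ast}\in E_{a,b}$ in the parallelogram step, and one must invoke Lemma~\ref{l5} to know the minimizer $v$ realizing $d_{1,a,b}$ exists in $E_{a,b}$; beyond that, matching the three occurrences of the constant $C_{4}$ just requires the harmless shrinking indicated above.
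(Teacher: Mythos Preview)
Your proof is correct and follows the same overall scaffold as the paper's: normalize $\|u\|=1$, split on the size of $\delta_{1,a,b}(u)$, in the small-deficit case produce a nearest $v\in E_{a,b}$ via Theorem~\ref{T3.6} and Lemma~\ref{l5} and renormalize to $w=\lambda v$, and in the large-deficit case use the trivial bound $d_{2,a,b}(u,E_{a,b})^{2}\leq 2\|u\|^{2}$; the $\delta_{2,a,b}$ statement is then obtained by squaring, exactly as in the paper's unnumbered theorem preceding Theorem~\ref{T3.7}.

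The one genuine difference is quantitative. In the small-deficit case the paper uses the crude estimate $\|u-w\|^{2}\leq 2\|u-v\|^{2}+2(\|v\|-\|u\|)^{2}\leq 4\|u-v\|^{2}$ (and in the large-deficit case the bound $d_{2,a,b}^{2}\leq 4$), arriving at $C_{4}=C_{2}/8$. You instead transplant the sharper elementary inequality $(2-\lambda)\langle u,v\rangle\leq\lambda^{-2}$ from the proof of Theorem~\ref{T3.3} to obtain $\|u-v\|^{2}\geq\tfrac12\|u-w\|^{2}$ directly, and pair this with the parallelogram bound $d_{2,a,b}^{2}\leq 2$ in the large-deficit case; this yields the better constant $C_{2}/2$ for the first inequality. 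Both routes are elementary and structurally identical; yours buys a factor of $4$ in the constant at no extra cost.
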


\begin{proof}
WLOG, asssume that ${\int\limits_{\mathbb{R}^{N}}}\frac{|u|^{2}}{|x|^{a+b+1}}%
$\textrm{$dx$}$=1$.

Now, recall that
\begin{align*}
&  \left(  {\int\limits_{\mathbb{R}^{N}}}\frac{|u|^{2}}{|x|^{2a}}%
\mathrm{dx}\right)  ^{\frac{1}{2}}\left(  {\int\limits_{\mathbb{R}^{N}}}%
\frac{|\nabla u|^{2}}{|x|^{2b}}\mathrm{dx}\right)  ^{\frac{1}{2}}-\left\vert
\frac{N-a-b-1}{2}\right\vert \left(  {\int\limits_{\mathbb{R}^{N}}}%
\frac{|u|^{2}}{|x|^{a+b+1}}\mathrm{dx}\right) \\
&  \geq C_{2}\left(  N,a,b\right)  \inf_{u^{\ast}\in E_{a,b}}{\int
\limits_{\mathbb{R}^{N}}}\frac{\left\vert u-u^{\ast}\right\vert ^{2}%
}{\left\vert x\right\vert ^{a+b+1}}\mathrm{dx}.
\end{align*}

Now, if
\[
\delta_{1,a,b}\left(  u\right)  =\left(  {\int\limits_{\mathbb{R}^{N}}}%
\frac{|u|^{2}}{|x|^{2a}}\mathrm{dx}\right)  ^{\frac{1}{2}}\left(
{\int\limits_{\mathbb{R}^{N}}}\frac{|\nabla u|^{2}}{|x|^{2b}}\mathrm{dx}%
\right)  ^{\frac{1}{2}}-\left\vert \frac{N-a-b-1}{2}\right\vert \left(
{\int\limits_{\mathbb{R}^{N}}}\frac{|u|^{2}}{|x|^{a+b+1}}\mathrm{dx}\right)
<\frac{C_{2}\left(  N,a,b\right)  }{2},
\]
then there exists $v\in E$, $v\neq0$ such that
\[
{\int\limits_{\mathbb{R}^{N}}}\frac{\left\vert u-v\right\vert ^{2}}{\left\vert
x\right\vert ^{a+b+1}}\mathrm{dx}\leq\frac{2}{C_{2}\left(  N,a,b\right)
}\delta_{1,a,b}\left(  u\right)  <1.
\]
Let $\lambda=\left(  {\int\limits_{\mathbb{R}^{N}}}\frac{|v|^{2}}{|x|^{a+b+1}%
}\mathrm{dx}\right)  ^{-\frac{1}{2}}$. Then $w=\lambda v\in E_{a,b}$ and
${\int\limits_{\mathbb{R}^{N}}}\frac{|w|^{2}}{|x|^{a+b+1}}$\textrm{$dx$}$=1$.
Also,
\begin{align*}
{\int\limits_{\mathbb{R}^{N}}}\frac{\left\vert u-w\right\vert ^{2}%
}{|x|^{a+b+1}}\mathrm{dx}  &  ={\int\limits_{\mathbb{R}^{N}}}\frac{\left\vert
u-\lambda v\right\vert ^{2}}{|x|^{a+b+1}}\mathrm{dx}\\
&  ={\int\limits_{\mathbb{R}^{N}}}\frac{\left\vert u-v+\left(  1-\lambda
\right)  v\right\vert ^{2}}{|x|^{a+b+1}}\mathrm{dx}\\
&  \leq2{\int\limits_{\mathbb{R}^{N}}}\frac{\left\vert u-v\right\vert ^{2}%
}{|x|^{a+b+1}}\mathrm{dx}+2\left(  1-\lambda\right)  ^{2}{\int
\limits_{\mathbb{R}^{N}}}\frac{\left\vert v\right\vert ^{2}}{|x|^{a+b+1}%
}\mathrm{dx}\\
&  \leq\frac{4}{C_{2}\left(  N,a,b\right)  }\delta_{1,a,b}\left(  u\right)
+2\left(  \frac{1}{\lambda}-1\right)  ^{2}\\
&  =\frac{4}{C_{2}\left(  N,a,b\right)  }\delta_{1,a,b}\left(  u\right)
+2\left(  \left(  {\int\limits_{\mathbb{R}^{N}}}\frac{\left\vert v\right\vert
^{2}}{|x|^{a+b+1}}\mathrm{dx}\right)  ^{\frac{1}{2}}-\left(  {\int
\limits_{\mathbb{R}^{N}}}\frac{\left\vert u\right\vert ^{2}}{|x|^{a+b+1}%
}\mathrm{dx}\right)  ^{\frac{1}{2}}\right)  ^{2}\\
&  \leq\frac{4}{C_{2}\left(  N,a,b\right)  }\delta_{1,a,b}\left(  u\right)
+2{\int\limits_{\mathbb{R}^{N}}}\frac{\left\vert u-v\right\vert ^{2}%
}{|x|^{a+b+1}}\mathrm{dx}\\
&  \leq\frac{8}{C_{2}\left(  N,a,b\right)  }\delta_{1,a,b}\left(  u\right)  .
\end{align*}
That is
\[
\delta_{1,a,b}\left(  u\right)  \geq\frac{C_{2}\left(  N,a,b\right)  }{8}%
\inf_{u^{\ast}\in E_{a,b}}\left\{  {\int\limits_{\mathbb{R}^{N}}}%
\frac{\left\vert u-u^{\ast}\right\vert ^{2}}{\left\vert x\right\vert ^{a+b+1}%
}\mathrm{dx}:{\int\limits_{\mathbb{R}^{N}}}\frac{|u|^{2}}{|x|^{a+b+1}%
}\mathrm{dx}={\int\limits_{\mathbb{R}^{N}}}\frac{|u^{\ast}|^{2}}{|x|^{a+b+1}%
}\mathrm{dx}\right\}  .
\]
If
\[
\delta_{1,a,b}\left(  u\right)  =\left(  {\int\limits_{\mathbb{R}^{N}}}%
\frac{|u|^{2}}{|x|^{2a}}\mathrm{dx}\right)  ^{\frac{1}{2}}\left(
{\int\limits_{\mathbb{R}^{N}}}\frac{|\nabla u|^{2}}{|x|^{2b}}\mathrm{dx}%
\right)  ^{\frac{1}{2}}-\left\vert \frac{N-a-b-1}{2}\right\vert \left(
{\int\limits_{\mathbb{R}^{N}}}\frac{|u|^{2}}{|x|^{a+b+1}}\mathrm{dx}\right)
\geq\frac{C_{2}\left(  N,a,b\right)  }{2}%
\]
then since
\[
\inf_{u^{\ast}\in E_{a,b}}\left\{  {\int\limits_{\mathbb{R}^{N}}}%
\frac{\left\vert u-u^{\ast}\right\vert ^{2}}{\left\vert x\right\vert ^{a+b+1}%
}\mathrm{dx}:{\int\limits_{\mathbb{R}^{N}}}\frac{|u|^{2}}{|x|^{a+b+1}%
}\mathrm{dx}={\int\limits_{\mathbb{R}^{N}}}\frac{|u^{\ast}|^{2}}{|x|^{a+b+1}%
}\mathrm{dx}\right\}  \leq4,
\]
we get%
\[
\delta_{1,a,b}\left(  u\right)  \geq\frac{C_{2}\left(  N,a,b\right)  }{8}%
\inf_{u^{\ast}\in E_{a,b}}\left\{  {\int\limits_{\mathbb{R}^{N}}}%
\frac{\left\vert u-u^{\ast}\right\vert ^{2}}{\left\vert x\right\vert ^{a+b+1}%
}\mathrm{dx}:{\int\limits_{\mathbb{R}^{N}}}\frac{|u|^{2}}{|x|^{a+b+1}%
}\mathrm{dx}={\int\limits_{\mathbb{R}^{N}}}\frac{|u^{\ast}|^{2}}{|x|^{a+b+1}%
}\mathrm{dx}\right\}  .
\]

\end{proof}

Our last result is a stronger stability version for the scale non-invariant
Caffarelli-Kohn-Nirenberg inequalities:

\begin{theorem}
\label{T3.8}Let $0\leq b<\frac{N-2}{2}$, $a\leq\frac{Nb}{N-2}$ and
$a+b+1=\frac{2bN}{N-2}$. There exists a universal constant $C_{5}\left(
N,a,b\right)  >0$ such that for all $u\in X_{a,b}:$
\begin{align*}
&  {\int\limits_{\mathbb{R}^{N}}}\frac{\left\vert \nabla u\right\vert ^{2}%
}{\left\vert x\right\vert ^{2b}}\mathrm{dx}+{\int\limits_{\mathbb{R}^{N}}%
}\frac{\left\vert u\right\vert ^{2}}{\left\vert x\right\vert ^{2a}}%
\mathrm{dx}-\left(  N-a-b-1\right)  {\int\limits_{\mathbb{R}^{N}}}%
\frac{\left\vert u\right\vert ^{2}}{\left\vert x\right\vert ^{a+b+1}%
}\mathrm{dx}\\
&  \geq C_{5}\left(  N,a,b\right)  \inf_{c}\left[
\begin{array}
[c]{c}%
{\int\limits_{\mathbb{R}^{N}}}\frac{\left\vert u-ce^{-\frac{1}{b+1-a}%
\left\vert x\right\vert ^{b+1-a}}\right\vert ^{2}}{\left\vert x\right\vert
^{a+b+1}}\mathrm{dx}+{\int\limits_{\mathbb{R}^{N}}}\frac{\left\vert
\nabla\left(  u-ce^{-\frac{1}{b+1-a}\left\vert x\right\vert ^{b+1-a}}\right)
\right\vert ^{2}}{\left\vert x\right\vert ^{2b}}\mathrm{dx}\\
+{\int\limits_{\mathbb{R}^{N}}}\frac{\left\vert u-ce^{-\frac{1}{b+1-a}%
\left\vert x\right\vert ^{b+1-a}}\right\vert ^{2}}{\left\vert x\right\vert
^{2a}}\mathrm{dx}%
\end{array}
\right]
\end{align*}

\end{theorem}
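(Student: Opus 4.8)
The plan is to mimic the argument behind Theorem~\ref{T3.4}: reduce the claim, via the exact remainder identity of Lemma~\ref{l2} and the substitution $u=v\psi$ with $\psi(x):=\exp\!\big(-\tfrac{1}{b+1-a}|x|^{b+1-a}\big)$, to a single \emph{weighted} Poincar\'e inequality for the Gaussian-type measure, which is furnished by Lemma~\ref{key2}. Throughout write $\alpha:=b+1-a>0$, so that $\nabla\psi=-|x|^{\alpha-2}x\,\psi$ and $\psi^{2}=e^{-\frac{2}{\alpha}|x|^{\alpha}}$; setting $v:=u\psi^{-1}$, Lemma~\ref{l2} identifies the left-hand side of the asserted inequality with $\int_{\mathbb{R}^{N}}|x|^{-2b}|\nabla v|^{2}\psi^{2}\,\mathrm{dx}$.

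First I would evaluate, for a fixed $c$, the three integrals inside the infimum. Writing $w:=v-c$, so $u-c\psi=w\psi$ and $\nabla(w\psi)=\psi\big(\nabla w-w|x|^{\alpha-2}x\big)$, one gets
\[
\frac{|\nabla(w\psi)|^{2}}{|x|^{2b}}=\psi^{2}\Big(\frac{|\nabla w|^{2}}{|x|^{2b}}-|x|^{\alpha-2-2b}\,x\cdot\nabla(w^{2})+w^{2}|x|^{2\alpha-2-2b}\Big).
\]
Integrating the middle term by parts against the weight $|x|^{-2b}$ and using
\[
\operatorname{div}\!\big(|x|^{\alpha-2-2b}\psi^{2}x\big)=\psi^{2}\big(-2|x|^{-2a}+(N-a-b-1)|x|^{-(a+b+1)}\big),
\]
together with the elementary identities $2\alpha-2-2b=-2a$ and $\alpha-2-2b=-(a+b+1)$, a short computation collapses the sum of the three integrals to
\[
\int_{\mathbb{R}^{N}}\frac{|\nabla v|^{2}}{|x|^{2b}}\psi^{2}\,\mathrm{dx}+(N-a-b)\int_{\mathbb{R}^{N}}\frac{(v-c)^{2}}{|x|^{a+b+1}}\psi^{2}\,\mathrm{dx},
\]
where $\nabla w=\nabla v$ was used. (The boundary terms vanish because $\psi^{2}$ decays faster than any power and, near the origin, the weights $|x|^{-2b},|x|^{-2a},|x|^{-(a+b+1)}$ are locally integrable by $b<\tfrac{N-2}{2}$ and $a\le\tfrac{Nb}{N-2}$; one first works with $u\in C_{0}^{\infty}(\mathbb{R}^{N}\setminus\{0\})$ and then uses density in $X_{a,b}$. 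Note also $N-a-b>0$, since $a+b\le\tfrac{2b(N-1)}{N-2}<N-1$.)

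Since the first integral above is independent of $c$, the proof then amounts to a weighted Poincar\'e inequality,
\[
\inf_{c}\int_{\mathbb{R}^{N}}\frac{(v-c)^{2}}{|x|^{a+b+1}}\psi^{2}\,\mathrm{dx}\ \le\ K\int_{\mathbb{R}^{N}}\frac{|\nabla v|^{2}}{|x|^{2b}}\psi^{2}\,\mathrm{dx},
\]
and this is precisely Lemma~\ref{key2} applied with $\mu=2b$, $\alpha=b+1-a$, $\delta=\tfrac{2}{b+1-a}$: the requirement $N-2>\mu$ is $b<\tfrac{N-2}{2}$; the requirement $\alpha\ge\tfrac{N-2-\mu}{N-2}$ unwinds to $b+1-a\ge\tfrac{N-2-2b}{N-2}$, i.e.\ exactly $a\le\tfrac{Nb}{N-2}$; and the output exponent $\tfrac{N\mu}{N-2}=\tfrac{2bN}{N-2}$ equals $a+b+1$ by the scaling hypothesis $a+b+1=\tfrac{2bN}{N-2}$. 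Hence $K=1/C\big(N,b+1-a,\tfrac{2}{b+1-a},2b\big)>0$ is admissible, and combining the above gives
\[
\inf_{c}\Big[\cdots\Big]\le\big(1+(N-a-b)K\big)\int_{\mathbb{R}^{N}}\frac{|\nabla v|^{2}}{|x|^{2b}}\psi^{2}\,\mathrm{dx},
\]
so the stated estimate holds with $C_{5}(N,a,b)=\big(1+(N-a-b)K\big)^{-1}$. The one step that needs care is the algebraic collapse in the second paragraph: the integration by parts must be carried out against the singular weight $|x|^{-2b}$, and one must check that the three weight exponents $-2b$, $-2a$, $-(a+b+1)$ interact --- through $\alpha=b+1-a$ --- so as to cancel every $|x|^{-2a}$ contribution and leave only a clean multiple of the $|x|^{-(a+b+1)}$-mass of $v-c$; this is the Caffarelli--Kohn--Nirenberg counterpart of the computation in the proof of Theorem~\ref{T3.4}. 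The rest is the bookkeeping showing that the hypotheses on $(a,b)$ are exactly what is needed to invoke Lemmas~\ref{l2} and~\ref{key2}.
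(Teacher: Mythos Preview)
Your proof is correct and follows essentially the same route as the paper's: the substitution $u=v\psi$, the integration-by-parts collapse of the three bracketed integrals into $\int|x|^{-2b}|\nabla v|^{2}\psi^{2}\,\mathrm{dx}+(N-a-b)\int|x|^{-(a+b+1)}(v-c)^{2}\psi^{2}\,\mathrm{dx}$, and the application of Lemma~\ref{key2} with $\mu=2b$, $\alpha=b+1-a$, $\delta=\tfrac{2}{b+1-a}$ are exactly the paper's steps. Your write-up is in fact somewhat more careful than the paper's in checking that the hypotheses of Lemma~\ref{key2} match the assumptions on $(a,b)$ and that $N-a-b>0$.
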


\begin{proof}
Indeed, let $u=ve^{-\frac{1}{b+1-a}\left\vert x\right\vert ^{b+1-a}}$, then
\begin{align*}
&  {\int\limits_{\mathbb{R}^{N}}}\frac{\left\vert \nabla\left(  u-ce^{-\frac
{1}{b+1-a}\left\vert x\right\vert ^{b+1-a}}\right)  \right\vert ^{2}%
}{\left\vert x\right\vert ^{2b}}\mathrm{dx}\\
&  ={\int\limits_{\mathbb{R}^{N}}}\frac{\left\vert \nabla\left[  \left(
v-c\right)  e^{-\frac{1}{b+1-a}\left\vert x\right\vert ^{b+1-a}}\right]
\right\vert ^{2}}{\left\vert x\right\vert ^{2b}}\mathrm{dx}\\
&  ={\int\limits_{\mathbb{R}^{N}}}\frac{\left\vert \nabla v\right\vert
^{2}e^{-\frac{2}{b+1-a}\left\vert x\right\vert ^{b+1-a}}}{\left\vert
x\right\vert ^{2b}}\mathrm{dx}-2{\int\limits_{\mathbb{R}^{N}}}\frac{\left(
v-c\right)  x\cdot\nabla\left(  v-c\right)  e^{-\frac{2}{b+1-a}\left\vert
x\right\vert ^{b+1-a}}}{\left\vert x\right\vert ^{a+b+1}}\mathrm{dx}\\
&  +{\int\limits_{\mathbb{R}^{N}}}\frac{\left\vert v-c\right\vert
^{2}e^{-\frac{2}{b+1-a}\left\vert x\right\vert ^{b+1-a}}}{\left\vert
x\right\vert ^{2a}}\mathrm{dx}\\
&  ={\int\limits_{\mathbb{R}^{N}}}\frac{\left\vert \nabla v\right\vert
^{2}e^{-\frac{2}{b+1-a}\left\vert x\right\vert ^{b+1-a}}}{\left\vert
x\right\vert ^{2b}}\mathrm{dx}+\left(  N-a-b-1\right)  {\int
\limits_{\mathbb{R}^{N}}}\frac{\left\vert v-c\right\vert ^{2}}{\left\vert
x\right\vert ^{a+b+1}}e^{-\frac{2}{b+1-a}\left\vert x\right\vert ^{b+1-a}%
}\mathrm{dx}\\
&  -{\int\limits_{\mathbb{R}^{N}}}\frac{\left\vert v-c\right\vert
^{2}e^{-\frac{2}{b+1-a}\left\vert x\right\vert ^{b+1-a}}}{\left\vert
x\right\vert ^{2a}}\mathrm{dx}.
\end{align*}
Hence%
\begin{align*}
&  {\int\limits_{\mathbb{R}^{N}}}\frac{\left\vert u-ce^{-\frac{1}%
{b+1-a}\left\vert x\right\vert ^{b+1-a}}\right\vert ^{2}}{\left\vert
x\right\vert ^{a+b+1}}\mathrm{dx}+{\int\limits_{\mathbb{R}^{N}}}%
\frac{\left\vert \nabla\left(  u-ce^{-\frac{1}{b+1-a}\left\vert x\right\vert
^{b+1-a}}\right)  \right\vert ^{2}}{\left\vert x\right\vert ^{2b}}%
\mathrm{dx}+{\int\limits_{\mathbb{R}^{N}}}\frac{\left\vert u-ce^{-\frac
{1}{b+1-a}\left\vert x\right\vert ^{b+1-a}}\right\vert ^{2}}{\left\vert
x\right\vert ^{2a}}\mathrm{dx}\\
&  ={\int\limits_{\mathbb{R}^{N}}}\frac{\left\vert \nabla\left[  \left(
v-c\right)  e^{-\frac{1}{b+1-a}\left\vert x\right\vert ^{b+1-a}}\right]
\right\vert ^{2}}{\left\vert x\right\vert ^{2b}}\mathrm{dx}+{\int
\limits_{\mathbb{R}^{N}}}\frac{\left\vert v-c\right\vert ^{2}}{\left\vert
x\right\vert ^{a+b+1}}e^{-\frac{2}{b+1-a}\left\vert x\right\vert ^{b+1-a}%
}\mathrm{dx}+{\int\limits_{\mathbb{R}^{N}}}\frac{\left\vert v-c\right\vert
^{2}e^{-\frac{2}{b+1-a}\left\vert x\right\vert ^{b+1-a}}}{\left\vert
x\right\vert ^{2a}}\mathrm{dx}\\
&  \leq{\int\limits_{\mathbb{R}^{N}}}\frac{\left\vert \nabla v\right\vert
^{2}e^{-\frac{2}{b+1-a}\left\vert x\right\vert ^{b+1-a}}}{\left\vert
x\right\vert ^{2b}}\mathrm{dx}+\left(  N-a-b\right)  {\int\limits_{\mathbb{R}%
^{N}}}\frac{\left\vert v-c\right\vert ^{2}}{\left\vert x\right\vert ^{a+b+1}%
}e^{-\frac{2}{b+1-a}\left\vert x\right\vert ^{b+1-a}}\mathrm{dx}%
\end{align*}
Therefore, by the weighted Poincar\'{e} inequality Lemma \ref{key2}, we
obtain
\begin{align*}
&  \inf_{c\in\mathbb{R}}{\int\limits_{\mathbb{R}^{N}}}\frac{\left\vert
u-ce^{-\frac{1}{b+1-a}\left\vert x\right\vert ^{b+1-a}}\right\vert ^{2}%
}{\left\vert x\right\vert ^{a+b+1}}\mathrm{dx}+{\int\limits_{\mathbb{R}^{N}}%
}\frac{\left\vert \nabla\left(  u-ce^{-\frac{1}{b+1-a}\left\vert x\right\vert
^{b+1-a}}\right)  \right\vert ^{2}}{\left\vert x\right\vert ^{2b}}%
\mathrm{dx}+{\int\limits_{\mathbb{R}^{N}}}\frac{\left\vert u-ce^{-\frac
{1}{b+1-a}\left\vert x\right\vert ^{b+1-a}}\right\vert ^{2}}{\left\vert
x\right\vert ^{2a}}\mathrm{dx}\\
&  \leq{\int\limits_{\mathbb{R}^{N}}}\frac{\left\vert \nabla v\right\vert
^{2}e^{-\frac{2}{b+1-a}\left\vert x\right\vert ^{b+1-a}}}{\left\vert
x\right\vert ^{2b}}\mathrm{dx}+\left(  N-a-b\right)  \inf_{c\in\mathbb{R}%
}{\int\limits_{\mathbb{R}^{N}}}\frac{\left\vert v-c\right\vert ^{2}%
}{\left\vert x\right\vert ^{a+b+1}}e^{-\frac{2}{b+1-a}\left\vert x\right\vert
^{b+1-a}}\mathrm{dx}\\
&  \leq C\left(  N,a,b\right)  {\int\limits_{\mathbb{R}^{N}}}\frac{\left\vert
\nabla v\right\vert ^{2}e^{-\frac{2}{b+1-a}\left\vert x\right\vert ^{b+1-a}}%
}{\left\vert x\right\vert ^{2b}}\mathrm{dx}\\
&  =C\left(  N,a,b\right)  {\int\limits_{\mathbb{R}^{N}}}\frac{1}{\left\vert
x\right\vert ^{2b}}\left\vert \nabla\left(  ue^{\frac{1}{b+1-a}\left\vert
x\right\vert ^{b+1-a}}\right)  \right\vert ^{2}e^{-\frac{2}{b+1-a}\left\vert
x\right\vert ^{b+1-a}}\mathrm{dx}\\
&  =C\left(  N,a,b\right)  \left[  {\int\limits_{\mathbb{R}^{N}}}%
\frac{\left\vert \nabla u\right\vert ^{2}}{\left\vert x\right\vert ^{2b}%
}+{\int\limits_{\mathbb{R}^{N}}}\frac{\left\vert u\right\vert ^{2}}{\left\vert
x\right\vert ^{2a}}\mathrm{dx}-\left(  N-a-b-1\right)  {\int
\limits_{\mathbb{R}^{N}}}\frac{\left\vert u\right\vert ^{2}}{\left\vert
x\right\vert ^{a+b+1}}\right]  .
\end{align*}

\end{proof}

\end{document}